\newtheorem{Theorem}{Theorem}
\newtheorem{Corollary}{Corollary}
\newtheorem{Remark}{Remark}
\newtheorem{Proposition}{Proposition}
\newtheorem{Definition}{Definition}
\newenvironment{assumption}[1][Assumption]{\begin{trivlist}
\item[\hskip \labelsep {\bfseries #1}]}{\end{trivlist}}
\newcommand{\convP}{\stackrel{\mathcal{P}}{\longrightarrow}}
\newcommand{\ii}{\mathbbm{i}}
\newcommand{\tr}{\mathrm{tr}}
\newcommand{\spa}{\overline{\mathrm{sp}}}
\newcommand{\bv}{\mathbf{v}}
\newcommand{\bx}{\mathbf{x}}
\newcommand{\bX}{\mathbf{X}}
\newcommand{\cS}{\mathcal{S}}
\newcommand{\bphi}{\boldsymbol{\phi}}
\newcommand{\SPDO}{\mathcal{F}}
\newcommand{\ip}[2]{\langle #1,#2 \rangle}
\newtheorem{lemma}{Lemma}
\newcommand{\Sum}{\mathop{\sum}\limits}
\DeclareMathOperator{\cov}{cov} \DeclareMathOperator{\Var}{Var}
\begin{document}

\title{Dynamic Functional Principal Components
}
\author{Siegfried H\"ormann$^1$\thanks{Corresponding author. Email: shormann@ulb.ac.be} \and {\L}ukasz Kidzi\'nski$^{1}$ \and Marc Hallin$^{2,3}$} 
\date{}
\maketitle
$^{1}$ {\small Department of Mathematics, Universit\'e libre de Bruxelles (ULB), CP210, Bd.\ du Triomphe, B-1050 Brussels, Belgium.}

$^2$ {\small ECARES,
Universit\'e libre de Bruxelles (ULB), CP 114/04
50, avenue F.D. Roosevelt
B-1050 Brussels,
Belgium.}

$^3$ {\small ORFE,
Princeton University,
Sherrerd Hall,
Princeton,
NJ 08540,
USA.}

\begin{abstract}
{\bf Abstract.} In this paper, we address the problem of dimension reduction for time series of functional data $(X_t\colon t\in\mathbb{Z})$. Such {\it functional time series}  frequently arise, e.g., when a continuous-time process is segmented into some smaller natural units, such as days. Then each~$X_t$ represents one intraday curve. We argue that functional principal component analysis (FPCA), though a key technique in the field and a benchmark for any competitor, does not provide an adequate dimension reduction in a time-series setting. FPCA indeed is a {\it static} procedure which ignores the essential information provided by   the serial dependence structure of the functional data under study. Therefore, inspired by Brillinger's theory of {\it dynamic principal components}, we propose a {\it dynamic}  version of FPCA, which is based on a frequency-domain approach. By means of a simulation study and an empirical illustration, we show the considerable improvement the dynamic approach  entails when compared to the usual static procedure.\vspace{-.5mm}
\end{abstract}

\noindent \textbf{Keywords.} Dimension reduction, frequency domain analysis, functional data analysis, functional time series, functional spectral analysis, principal components, Kar\-hu\-nen-Lo\`eve expansion.

\section{Introduction}\label{se:intro}

The tremendous technical improvements in data collection and storage allow to get an increasingly complete picture of many common phenomena. In principle, most processes in real life are continuous in time and, with improved data acquisition techniques, they can be recorded at arbitrarily high frequency. To benefit from increasing information, we need appropriate statistical tools that can help extracting the most important characteristics of some possibly high-dimensional specifications. Functional data analysis (FDA), in recent years, has proven to be an appropriate tool in many such cases and has consequently evolved into a very important field of research in the statistical community. 

Typically, functional data are considered as realizations of (smooth) random curves. Then every observation $X$ is a curve $(X(u)\colon u\in\mathcal{U})$. One generally assumes, for simplicity, that $\mathcal{U}=[0,1]$, but $\mathcal{U}$ could be a more complex domain like a cube or the surface of a sphere. Since observations are functions, we are dealing with high-dimensional -- in fact intrinsically infinite-dimensional -- objects. So, not surprisingly, there is a demand for efficient data-reduction techniques. As such, {\em functional principal component analysis} (FPCA) has taken a leading role in FDA, and  {\em functional principal components} (FPC)  arguably  can be seen as {\it the} key technique in the field. 

In analogy to classical multivariate PCA (see Jolliffe~\cite{jolliffe:2002}), functional PCA  relies on an eigendecomposition of the underlying covariance function. The mathematical foundations for this have been laid several decades ago in the pioneering papers by Karhunen~\cite{karhunen:1947} and Lo\`eve~\cite{loeve:1946}, but it took a while until the method was popularized in the statistical community. Some earlier contributions are Besse and Ramsay~\cite{besse:ramsay:1986}, Ramsay and Dalzell~\cite{ramsay:dalzell:1991} and, later, the influential books by Ramsay and Silverman~\cite{ramsay:silverman:2002},~\cite{ramsay:silverman:2005} and Ferraty and Vieu~\cite{ferraty:vieu:2006}. Statisticians have been working on problems related to estimation and inference (Kneip and Utikal~\cite{kneip:utikal:2001}, Benko et al.~\cite{benko:hardle:kneip:2009}), asymptotics (Dauxois et al.~\cite{dauxois:pousse:romain:1982} and Hall and Hosseini-Nasab~\cite{hall:hosseini:2006}), smoothing techniques (Silverman~\cite{silverman:1996}), sparse data (James et al.~\cite{james:hastie:sugar:2000}, Hall et al.~\cite{hall:muller:wang:2006}), and robustness issues (Locantore et al.~\cite{locantore:etal:1999}, Gervini~\cite{gervini:2007}), to name just a few. Important applications include FPC-based estimation of functional linear models (Cardot et al.~\cite{cardot:ferraty:sarda:1999}, Reiss and Ogden~\cite{reiss:ogden:2007}) or forecasting (Hyndman and Ullah~\cite{hyndman:ullah:2007}, Aue et al.~\cite{aue:dubart:hoermann:2012}). The usefulness of functional PCA has also been recognized in other scientific disciplines, like chemical engineering (Gokulakrishnan et al.~\cite{gokulakrishnan:etal:2006}) or functional magnetic resonance imaging (Aston and Kirch~\cite{aston:kirch:2012}, Viviani et al.~\cite{viviani:etal:2005}).  Many more references can be found in the above cited papers and in Sections~8--10 of Ramsay and Silverman~\cite{ramsay:silverman:2005}, where we refer to for background reading. 

Most existing concepts and methods in FDA, even though they may tolerate some amount of serial dependence, have been developed for independent observations. This is a serious weakness, as in numerous applications the functional data under study are obviously dependent, either in time or in space. Examples include daily
curves of financial transactions, daily patterns of geophysical and environmental data, annual temperatures measured on the surface of the earth,  etc. In such cases, we should view the data as the realization of a {\em functional time series} $(X_t(u)\colon t\in\mathbb{Z})$, where the time parameter $t$ is discrete and the parameter $u$ is continuous. For example, in case of daily observations, the curve $X_t(u)$ may be viewed as the observation on day $t$ with intraday time parameter $u$. A key reference on functional time series techniques is Bosq~\cite{bosq:2000}, who studied functional versions of AR processes. We also refer to H\"ormann and Kokoszka~\cite{hormann:kokoszka:2012} for a survey.  

Ignoring serial dependence in this time-series context may result in misleading conclusions and inefficient procedures. H\"ormann and Kokoszka \cite{hormann:kokoszka:2010} investigate the robustness properties of some classical FDA methods in the presence of serial dependence. Among others, they show that usual FPCs still can be consistently estimated within a quite general dependence framework. Then the basic problem, however, is not about consistently estimating traditional FPCs: the problem is that, in a time-series context, traditional FPCs are not the adequate concept of dimension reduction  anymore -- a fact which, since the seminal work of Brillinger~\cite{brillinger:2001}, is well recognized in the usual vector time-series setting. FPCA indeed operates in a {\it static} way: when applied to serially dependent curves, it fails to take into account the potentially very valuable information carried by the past values of the functional observations under study.
In particular, a static FPC with small eigenvalue, hence negligible instantaneous impact on $X_t$, may have a major impact on $X_{t+1}$, and high predictive value. 

Besides their failure to produce optimal dimension reduction, static FPCs, while cross-sectionally uncorrelated at fixed time $t$, typically still exhibit lagged cross-correlations. Therefore the resulting FPC scores cannot be analyzed componentwise as in the i.i.d.\ case, but need to be considered as vector time series which are less easy to handle and interpret.

These major shortcomings are motivating the present development of {\em dynamic functional principal components} (dynamic FPCs). The idea is to transform the functional time series into a vector time series (of low dimension, $\leq 4$, say), where the individual component processes are mutually uncorrelated (at all leads and lags; {\it auto}correlation is allowed, though), and account for most of the dynamics and variability of the original process. The analysis of the functional time series can then be performed on those dynamic FPCs; thanks to  their mutual orthogonality,  
  dynamic FPCs  moreover can be    analyzed componentwise. In analogy to  static FPCA, the curves can be optimally reconstructed/approximated from the low-dimensional  dynamic FPCs  via a dynamic version of the celebrated {\em Karhunen-Lo\`eve expansion}. 

Dynamic principal components first have been suggested by Brillinger~\cite{brillinger:2001} for vector time series. The purpose of this article is to develop and study a similar approach in a functional setup. The methodology relies on a frequency-domain analysis for functional data, a topic which is still in its infancy (see, for instance, Panaretos and Tavakoli~2013a).

The rest of the paper is organized as follows. In Section~\ref{se:illustration} we give a first illustration of the procedure and sketch two typical applications. In Section~\ref{se:metho}, we describe our approach and state a number of relevant propositions. We also provide some asymptotic features. In Section~\ref{se:pract}, we discuss its computational implementation. After an illustration of the methodology by a real data example on pollution curves in Section~\ref{se:reald}, we evaluate our approach in a simulation study (Section~\ref{se:simul}). Appendices~A and B detail the  mathematical framework and contain the proofs. Some of the more technical results and proofs are outsourced to Appendix~C. 

After the present paper (which has been available on Arxiv since October 2012) was submitted, another paper by Panaretos and Tavakoli (2013b) was published, where similar ideas are proposed. While both papers aim at the same objective of a functional extension of Brillinger's concept, there are essential differences between the solutions developed. The main result in Panaretos and Tavakoli (2013b) is  the existence of a functional process $(X_t^*)$ of rank $q$ which serves as an ``optimal approximation''   to the  process $(X_t)$ under study. The construction of $(X_t^*)$, which is mathematically quite elegant, is based on stochastic integration with respect to some orthogonal-increment (functional) stochastic process $(Z_\omega)$. The disadvantage, from  a statistical perspective, is that this construction is not explicit, and that no finite-sample version of the concept is provided -- only the limiting behavior of the empirical spectral density operator and its eigenfunctions is obtained. Quite on the contrary, our Theorem~\ref{th:dfpc:consistency} establishes the consistency of an empirical, explicitly constructed and easily implementable version of the dynamic scores -- which is what a statistician will be interested in. We also remark that we are working under milder technical conditions.

\section{Illustration of the method}\label{se:illustration}

An impression of how well the proposed method works can be obtained from Figure~\ref{fig:comp}. Its left panel shows ten consecutive intraday curves of some pollutant level (a detailed description of the underlying data is given in Section~\ref{se:reald}). The two panels to the right show one-dimensional reconstructions of these curves. We used static FPCA in the central panel and dynamic FPCA in the right panel.
\begin{figure}[ht]
\centering
\includegraphics[width=12.5cm]{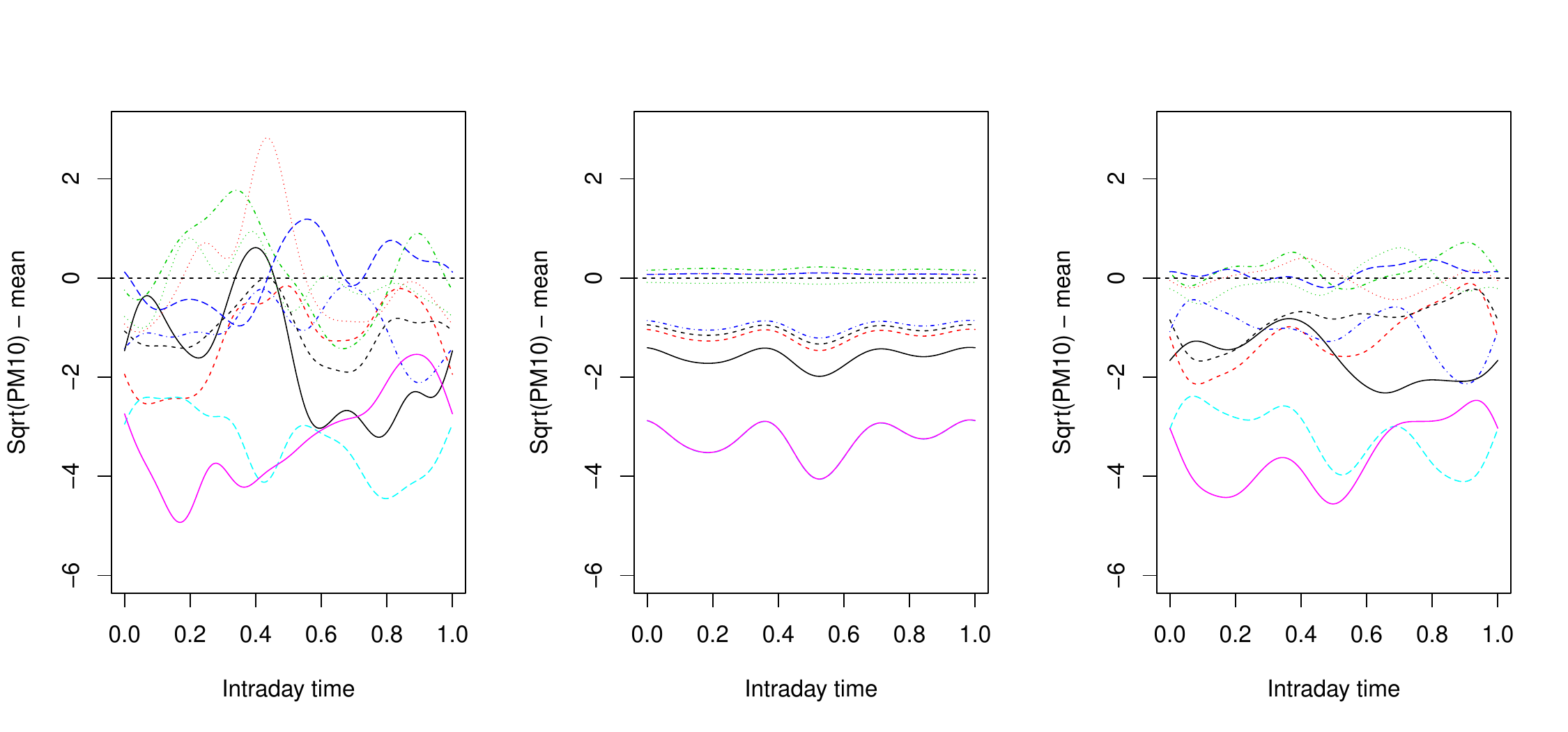}
\caption{\small Ten successive daily observations (left panel), the corresponding  {\it static} Karhunen-Lo\`eve expansion based on  one (static) principal component (middle panel), and the {\it dynamic} Karhunen-Lo\`eve expansion with one dynamic component (right panel). Colors provide the matching between the actual observations and their Karhunen-Lo\`eve approximations. 
}
\label{fig:comp}
\end{figure}
The difference is notable. The static method merely provides an average level,  exhibiting a completely spurious and highly misleading  intraday symmetry. In addition to daily average levels, the dynamic approximation, to a large extent, also catches the intraday evolution of the curves. In particular, it retrieves  the intraday trend of pollution levels, and the location of their daily spikes and troughs (which varies considerably from one curve to the other). For this illustrative example we chose one-dimensional reconstructions, based on one single FPC; needless to say,   increasing the number of FPCs (several principal components), we obtain much better approximations -- see Section~4 for details. 

Applications of dynamic PCA in a time series analysis are the same as those  of static PCA in the context of independent (or uncorrelated) observations. This is why obtaining mutually orthogonal principal components -- in the sense of mutually orthogonal {\it processes} -- is a major issue here. 
This orthogonality, at all leads and lags, of  dynamic principal components, indeed, implies that any second-order based method  (which is the most common approach in time series) can be carried out componentwise, i.e.\  via scalar methods. In contrast,  static  principal components  still have to be treated as a  multivariate time series. 

Let us illustrate this superiority of mutually orthogonal dynamic components over the auto- and cross-correlated static ones by means of two examples.\bigskip

\noindent
\emph{Change point analysis:} Suppose that we wish to find a structural break (change point) in a sequence of functional observations $X_1,\ldots,X_n$. For example, Berkes et al.~\cite{berkes:gabrys:horvath:kokoszka:2009} consider the problem of detecting a change in the mean function of a sequence of independent functional data. They propose to first project data on the~$p$ leading principal components and argue that a change in the mean will show in the score vectors, provided hat the proportion of variance they are accounting for is large enough.  Then a CUSUM procedure is utilized. The test statistic is based on the functional
$$
T_n(x)=\frac{1}{n}\sum_{m=1}^p\hat\lambda_m^{-1}\left(\sum_{1\leq k\leq nx}\hat{Y}^\text{stat}_{mk}-x\sum_{1\leq k\leq n}\hat{Y}^\text{stat}_{mk}\right)^2,\quad 0\leq x\leq 1.
$$
Here $\hat{Y}^\text{stat}_{mk}$ is the $m$-th empirical PC score of $X_k$ and $\hat\lambda_m$ is the $m$-th largest eigenvalue of the empirical covariance operator related to the functional sample.
The assumption of independence  implies that $T_n(x)$ converges, under the no-change hypothesis, to the sum of $p$ squared independent Brownian bridges. Roughly speaking, this is due to the fact that the partial sums of score vectors (used in the CUSUM statistic) converge in distribution  to a multivariate normal  with diagonal covariance. 
That is,  the partial sums of the individual scores become asymptotically  independent, and we just  obtain $p$ independent CUSUM test statistics -- a separate one for each score sequence. The independent test statistics are then aggregated.

This simple structure is lost when data are serially dependent. Then, if a CLT holds,
$
\big(\sum_{1\leq k\leq n}\hat{Y}^\text{stat}_{mk}\colon m=1,\ldots,p\big)^\prime
$
converges to a normal vector where the covariance (which is still diagonal) needs to be replaced by the long-run covariance of the score vectors, which is typically non-diagonal. 

In contrast, using dynamic principal components, the long-run covariance of the score vectors remains diagonal; see Proposition~\ref{pr:secondorder}. Let $\mathrm{diag}(\hat\lambda_1(0),\ldots,\hat\lambda_p(0))$ be a consistent estimator of this long-run variance and $\hat Y_{mk}^\mathrm{dyn}$ be the dynamic scores. Then replacing the test functionals $T_n(x)$ by 
$$
T^\text{dyn}_n(x)=\frac{2\pi}{n}\sum_{m=1}^p\hat\lambda_m^{-1}(0)\left(\sum_{1\leq k\leq nx}\hat{Y}^\text{dyn}_{mk}-x\sum_{1\leq k\leq n}\hat{Y}^\text{dyn}_{mk}\right)^2,\quad 0\leq x\leq 1,
$$
we get that (under appropriate technical assumptions ensuring a functional CLT)  
the same asymptotic behavior holds as for $T_n(x)$,  so that  again $p$ independent CUSUM test statistics can be aggregated.

Dynamic principal components, thus, and not the static ones, provide a feasible extension of the Berkes et al.~\cite{berkes:gabrys:horvath:kokoszka:2009} method to the time series context. \bigskip

\noindent
\emph{Lagged regression:} A lagged regression model is a linear model in which the response $W_t\in\mathbb{R}^q$, say, is allowed to depend on an unspecified number of lagged values of a series of regressor variables $(X_t)\in\mathbb{R}^p$. More specifically, the  model equation is 
\begin{equation}\label{eq:lagged_reg}
W_t=a+\sum_{k\in\mathbb{Z}}b_kX_{t-k}+\varepsilon_t,
\end{equation}
with some i.i.d.\ noise $(\varepsilon_t)$ which is independent of the regressor series. The intercept $a\in\mathbb{R}^q$ and the matrices $b_k\in\mathbb{R}^{q\times p}$ are unknown. In time series analysis, the lagged regression is the natural extension of the traditional linear model for independent data.

The main  problem in this context, which can be tackled by a frequency domain approach, is estimation of the parameters. See, for example, Shumway and Stoffer~\cite{shumway:stoffer:2006} for an introduction. Once the parameters are known, the model can, e.g., be used for prediction.

Suppose now that $W_t$ is a scalar response and that $(X_k)$ constitutes a functional time series.  The corresponding lagged regression model can be formulated in analogy, but involves estimation of an unspecified number of operators, which is quite delicate. 
A pragmatic way to proceed is to have $X_k$ in \eqref{eq:lagged_reg} replaced by the vector of  the first $p$ dynamic functional principal component scores $Y_k=(Y_{1k},\ldots,Y_{pk})^\prime$, say. The general theory implies that, under mild assumptions (basically guaranteeing convergence of the involved series),  
$$
b_k=\frac{1}{2\pi}\int_{-\pi}^\pi B_\theta e^{\ii k\theta}d\theta,
\quad\text{where}\quad
B_\theta=\mathcal{F}_\theta^{WY}\big(\mathcal{F}_\theta^Y\big)^{-1},
$$
and 
$$
\mathcal{F}_\theta^Y=\frac{1}{2\pi}\sum_{h\in\mathbb{Z}}\mathrm{cov}(Y_{t+h},Y_t)e^{-\ii h\theta}\quad\text{and}\quad
\mathcal{F}_\theta^{WY}=\frac{1}{2\pi}\sum_{h\in\mathbb{Z}}\mathrm{cov}(W_{t+h},Y_t)e^{-\ii h\theta}
$$ 
are the spectral density matrix of the score sequence and the cross-spectrum between $(W_t)$ and $(Y_t)$, respectively. In the present setting the structure greatly simplifies. 
Our theory will reveal (see Proposition~\ref{pr:sd_filter0}) that $\mathcal{F}_\theta^Y$ is diagonal at all frequencies and that
$$
B_\theta=\left(\frac{f_\theta^{WY_1}}{\lambda_1(\theta)},\ldots, \frac{f_\theta^{WY_p}}{\lambda_p(\theta)}\right),
$$
with $f_\theta^{WY_m}$ being the co-spectrum between $(W_t)$ and $(Y_{mt})$ and $\lambda_m(\theta)$ is the $m$-th \emph{dynamic eigenvalue} of the spectral density operator of the series $(X_k)$ (see Section~\ref{se:sdoi}). As a consequence, the influence of each score sequence on the regressors can be assessed individually. 

Of course, in applications, these population quantities are replaced by their empirical versions and one may use some testing procedure for the null-hypothesis $H_0\colon
f_\theta^{WY_p}=0$
for all $\theta$,
in order to justify the choice of the dimension of the dynamic score vectors and to retain only those components which have a significant impact on $W_t$.

\section{Methodology for $L^2$ curves}\label{se:metho}
In this section, we introduce some necessary notation and tools. Most of the discussion on technical details is postponed to the Appendices~\ref{app:pract}, \ref{app:largesampleproperties} and \ref{app:technicalresults}. For simplicity,  we are  focusing here on $L^2([0,1])$-valued processes, i.e.\  on square-integrable functions defined on the unit interval;   in the appendices,  however, the theory is developed within a more general framework. 

\subsection{Notation and setup}\label{se:notat}
Throughout this section, we consider a functional time series $(X_t\colon t\in \mathbb{Z})$, where~$X_t$ takes values in the space
$H:=L^2([0,1])$ of complex-valued square-integrable functions on $[0,1]$. This means that $X_t=(X_{t}(u)\colon u\in[0,1])$, with 
$$
\int_0^1|X_t(u)|^2du<\infty\vspace{-2mm}
$$
($|z|:=\sqrt{z\bar z}$, where $\bar z$ the complex conjugate of $z$, stands for the modulus of $z \in \mathbb{C}$).
In most applications, observations are real, but, since we will use spectral methods, a complex vector space definition will serve useful. 

The space $H$ then is a Hilbert space, equipped with the inner product $\langle x,y\rangle:=\int_0^1x(u)\bar{y}(u)du$, so that $\|x\| := {\langle x,x\rangle}^{1/2}$ defines a norm.  The notation $X\in L_H^p$ is used to indicate that, for some $p>0$, $E[\|X\|^p]<\infty$. Any $X\in L_H^1$ then possesses a mean curve $\mu=(E[X(u)]\colon u\in[0,1])$, and any $X\in L_H^2$ a covariance operator $C$, defined by $C(x):=E[(X-\mu)\langle x,X-\mu\rangle]$. 
The operator $C$ is a kernel operator given by
\[
C(x)(u)=\int_0^1c(u,v)x(v)dv,\\\ \text{with}\ c(u,v):=\cov(X(u),X(v)),\,\, u,v\in[0,1],\vspace{-2mm}
\]
with 
 $\cov(X,Y):=E(X-EX)\overline{(Y-EY)}.$
The process $(X_t\colon t\in\mathbb{Z})$ is called {\em weakly stationary} if, for all $t$,    (i)~$X_t\in L_H^2$, (ii) $EX_t=EX_0$, and (iii) for all $h\in \mathbb{Z}$ and~$u,v\in[0,1]$, $$\cov(X_{t+h}(u),X_t(v))=\cov(X_{h}(u),X_0(v))=:c_h(u,v).$$ Denote by $C_h, h\in \mathbb{Z},$ the operator corresponding to the autocovariance kernels $c_h$. Clearly, $C_0=C$. It is well known that, under quite general dependence assumptions, the mean of a stationary functional sequence can be consistently estimated by the sample mean, with the usual $\sqrt{n}$-convergence rate. Since, for our problem, the mean is not really relevant, we throughout suppose that the data have been 
 centered in some preprocessing step. {\em For the rest of the paper, it is tacitly assumed that~$(X_t\colon t\in\mathbb{Z})$ is a weakly stationary, zero mean process defined on some probability space $(\Omega,\mathcal{A},P)$.}

As in the multivariate case, the covariance operator $C$ of a random element $X\in L_H^2$ admits an eigendecomposition (see, e.g., p.\ 178, Theorem 5.1 in \cite{gohberg:goldberg:kaashoek:2003})\vspace{-2mm}
\begin{equation}\label{eq:ed}
C(x)=\sum_{\ell=1}^\infty\lambda_\ell\langle x,v_\ell\rangle v_\ell,\vspace{-2mm}
\end{equation}
where $(\lambda_\ell\colon\ell \geq 1)$ are $C$'s eigenvalues (in descending order) and $(v_\ell\colon\ell \geq 1)$ the corresponding normalized eigenfunctions, so that $C(v_\ell)=\lambda_\ell v_\ell$ and $\|v_\ell\|=1$. 
If $C$ has full rank, then the sequence $(v_\ell\colon\ell \geq 1)$ forms an orthonormal basis 
 of $L^2([0,1])$. Hence, $X$ admits the representation \vspace{-2mm}
 \begin{equation}\label{eq:KLexpansion}
X=\sum_{\ell=1}^\infty\langle X,v_\ell\rangle v_\ell,\vspace{-2mm}
\end{equation} 
which is called the {\em static  Karhunen-Lo\`eve} expansion of $X$.
The eigenfunctions~$v_\ell$ are called {\em the (static) functional principal components (FPCs)} and the coefficients~$\langle X,v_\ell\rangle$ are called {\em the (static) FPC scores} or {\em loadings}. It is well known that the basis~$(v_\ell\colon \ell\geq~1)$ is optimal in representing $X$ in the following sense: if $(w_\ell\colon\ell\geq 1)$ is any other orthonormal basis of $H$, then 
\begin{equation}\label{eq:fpcopt}
E\|X-\sum_{\ell=1}^p \langle X,v_\ell\rangle v_\ell\|^2\leq
E\|X-\sum_{\ell=1}^p \langle X,w_\ell\rangle w_\ell\|^2,\quad \forall p\geq 1.\vspace{-2mm}
\end{equation}
Property \eqref{eq:fpcopt} shows that a finite number of FPCs can be used to approximate the function $X$ by a vector of given dimension $p$ with a minimum loss of ``instantaneous'' information.
It should be stressed, though, that this approximation is of a  {\em static}  nature, meaning that it is performed observation by observation, and does not take into account the possible serial dependence of the $X_t$'s, which is likely to exist in a time-series context. Globally speaking, we should be looking for an approximation which also involves lagged observations, and is based on the whole family $(C_h\colon h\in\mathbb{Z})$ rather than on $C_0$ only. To achieve this goal, we introduce below the {\em spectral density operator}, which contains the full information on the family of operators $(C_h\colon h\in\mathbb{Z})$.

\subsection{The spectral density operator}\label{se:sdoi}
In analogy to the classical concept of a spectral density matrix, we define the {s\it pectral density operator}.

\begin{Definition}\label{def:spectral}
Let $(X_t)$ be a stationary process. The operator $\mathcal{F}_\theta^X$ whose kernel is $$
f^X_\theta(u,v):=\frac{1}{2\pi}\sum_{h\in \mathbb{Z}}c_h(u,v)e^{-\ii h\theta},\quad\theta\in[-\pi,\pi],
\vspace{-2mm}$$
where $\ii$ denotes the imaginary unit, is called the {\em spectral density operator of $(X_t)$ at frequency~$\theta$}.
\end{Definition}

To ensure convergence (in an appropriate sense) of the series defining $f^X_\theta(u,v)$ (see Appendix~\ref{se:sdo}), we impose the following  summability condition on the autocovariances
\begin{equation}\label{e:wd}
\sum_{h\in\mathbb{Z}}\left(\int_0^1\int_0^1|c_h(u,v)|^2 du dv\right)^{1/2}<\infty.
\end{equation}
The same condition is more conveniently expressed as
\begin{equation}\label{e:abssymcov}
\sum_{h\in\mathbb{Z}}\|C_h\|_\mathcal{S}<\infty,\vspace{-2mm}
\end{equation}
where $\|\cdot\|_\mathcal{S}$ denotes the Hilbert-Schmidt norm (see Appendix~\ref{se:lo}). A simple sufficient condition for \eqref{e:abssymcov} to hold will be provided in Proposition~\ref{pr:summab}.

This concept of a spectral density operator has been introduced by Panaretos and Tavakoli~\cite{panaretos:tavakoli:2012}. In our context, this operator is used to create particular {\em functional filters} (see Sections~\ref{ss:dynam} and \ref{ss:ffilters}),  which are the building blocks for the construction of {\em dynamic} FPCs. A functional filter is defined via a sequence $\boldsymbol{\Phi}=(\Phi_\ell\colon \ell\in\mathbb{Z})$ of linear operators between the spaces $H=L^2([0,1])$ and $H^{\prime}=\mathbb{R}^p$. The filtered variables $Y_t$ have the form $Y_t=\sum_{\ell\in\mathbb{Z}}\Phi_\ell(X_{t-\ell})$, and by the Riesz representation theorem, the linear operators $\Phi_\ell$ are given as 
$$x\mapsto\Phi_\ell(x)=(\langle x,\phi_{1\ell}\rangle,\ldots,\langle x,\phi_{p\ell}\rangle)^\prime,\quad\text{with}\quad \phi_{1\ell},\ldots,\phi_{p\ell}\in H.$$
We shall considerer filters $\boldsymbol{\Phi}$ for which the sequences
$
(\sum_{\ell=-N}^N\phi_{m\ell}(u)e^{\ii \ell\theta}\colon N\geq1)
$,
$
1\leq m\leq p,
$
converge in 
$L^2([0,1]\times[-\pi,\pi])$. Hence, we assume existence of a square integrable function $\phi_m^\star(u|\theta)$ such  that
\begin{equation}\label{e:norm_2}
\lim_{N\to\infty}\int_{-\pi}^\pi\int_0^1\left(\sum_{\ell=-N}^N\phi_{m\ell}(u)e^{\ii \ell\theta}-\phi_m^\star(u|\theta)\right)^2dud\theta=0.
\end{equation}
In addition we suppose that 
\begin{equation}\label{e:norm_3}
\sup_{\theta\in[-\pi,\pi]}\int_0^1\left[\phi_m^\star(u|\theta)\right]^2du<\infty.
\end{equation}
Then, we write
$\phi_m^\star(\theta):=\sum_{\ell\in\mathbb{Z}}\phi_{m\ell}e^{\ii \ell\theta}$ or, in order  to emphasize its  functional nature,~$\phi_m^\star(u|\theta):=\sum_{\ell\in\mathbb{Z}}\phi_{m\ell}(u)e^{\ii \ell\theta}$. We denote by $\mathcal{C}$ the family of filters $\boldsymbol{\Phi}$ which satisfy \eqref{e:norm_2} and \eqref{e:norm_3}.
For example, if $\boldsymbol{\Phi}$ is such that $\sum_{\ell}\|\phi_{m\ell}\|<\infty$, then   $\boldsymbol{\Phi}\in \mathcal{C}$.

 The following proposition relates the spectral density operator of $(X_t)$ to the spectral density matrix of the filtered sequence $(Y_t=\sum_{\ell\in\mathbb{Z}}\Phi_\ell(X_{t-\ell}))$. This simple result plays a crucial role in our construction. 

\begin{Proposition}\label{pr:sd_filter} Assume that $\boldsymbol{\Phi}\in\mathcal{C}$ and let $\phi_m^\star(\theta)$ be given as above. Then the series $\sum_{\ell\in\mathbb{Z}}\Phi_\ell(X_{t-\ell})$ converges in mean square to a limit $Y_t$. The $p$-dimensional vector process $(Y_t)$ is stationary, with   spectral density matrix 
$$
\mathcal{F}^Y_\theta=
\begin{pmatrix}
\langle \mathcal{F}^X_\theta(\phi_1^\star(\theta)),\phi_{1}^\star(\theta)\big\rangle &\cdots&\langle \mathcal{F}^X_\theta(\phi_p^\star(\theta)),\phi_{1}^\star(\theta)\big\rangle\\
\vdots&\ddots&\vdots\\
\langle \mathcal{F}^X_\theta(\phi_1^\star(\theta)),\phi_{p}^\star(\theta)\big\rangle &\cdots&\langle \mathcal{F}^X_\theta(\phi_p^\star(\theta)),\phi_{p}^\star(\theta)\big\rangle
\end{pmatrix}.
$$
\end{Proposition}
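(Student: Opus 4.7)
First, Parseval's identity applied to the Fourier series defining $\phi_m^\star$ combined with \eqref{e:norm_3} yields
\begin{equation*}
\sum_{\ell \in \mathbb{Z}}\|\phi_{m\ell}\|^2 = \frac{1}{2\pi}\int_{-\pi}^\pi\|\phi_m^\star(\theta)\|^2 d\theta < \infty.
\end{equation*}
A routine Fubini argument gives the identity $E[\langle X_s, x\rangle \overline{\langle X_t, y\rangle}] = \langle C_{s-t}(y), x\rangle$ for any $x, y \in H$. Applying this to the truncated partial sums and using $|\langle C_h(x), y\rangle| \leq \|C_h\|_\mathcal{S}\|x\|\|y\|$, I obtain
\begin{equation*}
E\bigl|\textstyle\sum_{M<|\ell|\leq N}\langle X_{t-\ell}, \phi_{m\ell}\rangle\bigr|^2 \leq \Bigl(\sum_{h\in\mathbb{Z}}\|C_h\|_\mathcal{S}\Bigr)\Bigl(\sum_{|\ell|>M}\|\phi_{m\ell}\|^2\Bigr)^{1/2}\Bigl(\sum_{\ell}\|\phi_{m\ell}\|^2\Bigr)^{1/2},
\end{equation*}
by regrouping the index pairs $(\ell_1,\ell_2)$ according to the lag $h = \ell_2-\ell_1$ and applying Cauchy-Schwarz. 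The right-hand side vanishes as $M\to\infty$ by \eqref{e:abssymcov} and the square-summability above, so the truncated filters form a Cauchy sequence in $L^2(\Omega)$ componentwise and define $Y_t$.

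\textbf{Covariances and stationarity.} Passing to the limit by $L^2(\Omega)$-continuity of covariances, the same calculation yields
\begin{equation*}
\gamma_h^{Y,mm'} := \cov(Y_{t+h,m}, Y_{t,m'}) = \sum_{\ell_1, \ell_2 \in \mathbb{Z}}\langle C_{h-\ell_1+\ell_2}(\phi_{m'\ell_2}), \phi_{m\ell_1}\rangle,
\end{equation*}
with absolute convergence granted by the same Hilbert-Schmidt bound. Since $\gamma_h^{Y,mm'}$ depends only on $h$, the vector process $(Y_t)$ is weakly stationary.

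\textbf{Spectral density.} I form $f_\theta^{Y,mm'} = \frac{1}{2\pi}\sum_h \gamma_h^{Y,mm'} e^{-\ii h\theta}$, substitute the double-sum expression for $\gamma_h^{Y,mm'}$, and reindex by $k = h - \ell_1 + \ell_2$, so that the exponential factor splits as $e^{-\ii k\theta}\cdot e^{-\ii\ell_1\theta}\cdot e^{\ii\ell_2\theta}$. Using sesquilinearity to absorb $e^{\ii\ell_2\theta}$ into the first argument and $e^{-\ii\ell_1\theta} = \overline{e^{\ii\ell_1\theta}}$ into the second argument of $\langle C_k(\cdot), \cdot\rangle$, the inner sum over $(\ell_1,\ell_2)$ collapses, by continuity of $C_k$ and of the inner product, to $\langle C_k(\phi_{m'}^\star(\theta)), \phi_m^\star(\theta)\rangle$. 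The outer sum $\frac{1}{2\pi}\sum_k e^{-\ii k\theta}C_k$ converges to $\mathcal{F}_\theta^X$ in Hilbert-Schmidt norm by Definition \ref{def:spectral} together with \eqref{e:abssymcov}, which yields the announced entrywise identity $(\mathcal{F}_\theta^Y)_{mm'}=\langle \mathcal{F}_\theta^X(\phi_{m'}^\star(\theta)), \phi_m^\star(\theta)\rangle$.

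\textbf{Main obstacle.} The delicate point is the rigorous justification of the interchanges of summation, operator evaluation, and inner product in the spectral density step, since \eqref{e:norm_2} only provides $L^2([0,1]\times[-\pi,\pi])$ convergence of the Fourier partial sums defining $\phi_m^\star(\theta)$, with no direct pointwise-in-$\theta$ control. The remedy is to dominate the finite partial sums uniformly by the product $\|C_h\|_\mathcal{S}\|\phi_{m\ell_1}\|\|\phi_{m'\ell_2}\|$ and to invoke dominated convergence against the integrable majorants $\sum_h\|C_h\|_\mathcal{S}<\infty$ and $\sum_\ell\|\phi_{m\ell}\|^2<\infty$. Since for almost every $\theta$ the partial Fourier sums of $\phi_m^\star(\cdot|\theta)$ converge in $H$, the identity holds almost everywhere in $\theta$, and extends, as an equality of spectral density operators, to the full interval $[-\pi,\pi]$ by continuity of both sides in $\theta$ under \eqref{e:abssymcov}.
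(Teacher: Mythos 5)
Your first two steps are fine: the Cauchy--Schwarz/convolution bound for the tail of $\sum_{\ell}\langle X_{t-\ell},\phi_{m\ell}\rangle$ and the time-domain formula $\gamma_h^{Y,mm'}=\sum_{\ell_1,\ell_2}\langle C_{h-\ell_1+\ell_2}(\phi_{m'\ell_2}),\phi_{m\ell_1}\rangle$ are correct, and they give a time-domain alternative to the paper's route, which obtains Proposition~\ref{pr:sd_filter} from the general filter result (Proposition~\ref{pr:sd_filter0}) by bounding $E\|Y_t^{r,s}\|^2\le\int_{-\pi}^\pi\|\Psi^{r,s}_\theta\|_{\mathcal S}^2\,\tr(\mathcal F^X_\theta)\,d\theta$ for truncated filters. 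The genuine gap is in the spectral-density step, which is the actual content of the proposition. Your rearrangement of $\frac{1}{2\pi}\sum_h\gamma^{Y,mm'}_h e^{-\ii h\theta}$ into a triple sum over $(k,\ell_1,\ell_2)$ needs absolute convergence of that triple series, and the majorant you propose, $\|C_k\|_{\mathcal S}\|\phi_{m\ell_1}\|\|\phi_{m'\ell_2}\|$, sums to $\bigl(\sum_k\|C_k\|_{\mathcal S}\bigr)\bigl(\sum_{\ell_1}\|\phi_{m\ell_1}\|\bigr)\bigl(\sum_{\ell_2}\|\phi_{m'\ell_2}\|\bigr)$: it is summable only if the filter coefficients are absolutely summable, $\sum_\ell\|\phi_{m\ell}\|<\infty$. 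Membership in $\mathcal C$ only provides square-summability, and the paper deliberately avoids the $\ell^1$ assumption -- it remarks immediately after the proposition that $\sum_h C_h^Y e^{-\ii h\theta}$ need not converge absolutely, nor even pointwise in $\theta$. So dominated convergence cannot justify your interchange, and the object you start from (a pointwise-in-$\theta$ convergent series for $f^{Y,mm'}_\theta$) need not even exist; the statement must be proved as an equality in $L^2_{\mathbb{C}^{p\times p}}([-\pi,\pi])$, i.e.\ for almost all $\theta$.

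Two further assertions in that step do not hold as stated: \eqref{e:norm_2} gives $L^2([-\pi,\pi])$ convergence of the partial Fourier sums of $\phi_m^\star(\cdot|\theta)$, hence a.e.\ convergence only along a subsequence -- full a.e.\ pointwise convergence is a Carleson-type fact you cannot simply invoke; and the closing appeal to ``continuity of both sides in $\theta$'' fails because neither $\theta\mapsto\phi^\star_m(\theta)$ nor $\theta\mapsto\mathcal F^Y_\theta$ is continuous under the stated assumptions. The way to close the gap (essentially the paper's argument) is to avoid pointwise manipulation altogether: verify the identity for truncated filters, where all sums are finite, giving $C_h^{Y^s}=\frac{1}{2\pi}\int_{-\pi}^\pi\Psi^s_u\mathcal F^X_u(\Psi^s_u)^*e^{\ii hu}\,du$, and then pass to the limit in $L^2_{\mathcal S}([-\pi,\pi])$ using the Hilbert-space Fourier theory of Proposition~\ref{pr:Hfourier}. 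Equivalently, within your framework, show that $\theta\mapsto\langle\mathcal F^X_\theta(\phi^\star_{m'}(\theta)),\phi^\star_m(\theta)\rangle$ is square-integrable (it is bounded a.e.\ by \eqref{e:abssymcov} and \eqref{e:norm_3}) and that its $h$-th Fourier coefficient equals $\gamma^{Y,mm'}_h$, using the $L^2$ convergence of $\sum_{|\ell|\le N}\phi_{m\ell}e^{\ii\ell\theta}$ and continuity of the relevant bilinear forms, rather than a pointwise rearrangement of series.
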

 
Since we do not want to assume a priori absolute summability of the filter coefficients $\Phi_\ell$, the series $\mathcal{F}^Y_\theta=({2\pi})^{-1}\sum_{h\in\mathbb{Z}}C_h^Ye^{\ii h\theta}$, where $C_h^Y=\mathrm{cov}(Y_h,Y_0)$,  may not   converge absolutely, and hence not pointwise in $\theta$. 
 As our general theory will show, the operator $\mathcal{F}^Y_\theta$ can be considered as an element of the space $L^2_{\mathbb{C}^{p\times p}}([-\pi,\pi])$, i.e.\ the collection of measurable mappings $f:[-\pi,\pi]\to \mathbb{C}^{p\times p}$ for which $\int_{-\pi}^\pi \|f(\theta)\|_F^2d\theta<~\infty$, where $\|\cdot\|_F$ denotes the Frobenius norm. Equality of $f$ and $g$ is thus understood as~$\int_{-\pi}^\pi \|f(\theta)-g(\theta)\|_F^2d\theta=0$. In particular it implies that $f(\theta)=g(\theta)$ for almost all~$\theta$.

To explain the important consequences of Proposition~\ref{pr:sd_filter}, first observe that under~\eqref{e:abssymcov}, for every frequency $\theta$, the operator $\mathcal{F}^X_\theta$ is a non-negative, self-adjoint Hilbert-Schmidt operator (see Appendix C for details). Hence, in analogy to \eqref{eq:ed}, $\mathcal{F}^X_\theta$ admits, for all $\theta$,  the spectral representation 
$$
\mathcal{F}^X_\theta(x)=\sum_{m\geq 1}\lambda_m(\theta)\langle x,\varphi_m(\theta)\rangle\varphi_m(\theta),
$$
where $\lambda_m(\theta)$ and $\varphi_m(\theta)$ denote the {\em dynamic} eigenvalues and eigenfunctions.
%
We impose the order $\lambda_1(\theta)\geq \lambda_2(\theta)\geq\ldots\geq 0$ for all $\theta \in [-\pi,\pi]$, and require that the eigenfunctions be standardized  so that $\|\varphi_m(\theta)\|=1$ for all $m\geq 1$ and $\theta\in[-\pi,\pi]$.

Assume now that we could choose the functional filters $(\phi_{m\ell}\colon \ell\in\mathbb{Z})$ in such a way that
\begin{equation}\label{eq:convl2}
\lim_{N\to\infty}\int_{-\pi}^\pi\int_0^1\left(\sum_{\ell=-N}^N\phi_{m\ell}(u)e^{\ii \ell\theta}-\varphi_m(u|\theta)\right)^2dud\theta=0.
\end{equation}
 We then have $\mathcal{F}^Y_\theta=\mathrm{diag}(\lambda_1(\theta),\ldots,\lambda_p(\theta))$ for almost all $\theta$, implying that the coordinate processes of $(Y_t)$ are uncorrelated at any lag: $\cov(Y_{mt},Y_{m^\prime s})=0$ for all~$s,t$ and  $m\neq m^\prime$. As discussed in the Introduction, this is a desirable property which the static FPCs do not possess.

\subsection{Dynamic FPCs}\label{ss:dynam}

Motivated by the discussion above, we wish to define $\phi_{m\ell}$ in such a way that~$
\phi_m^\star=\varphi_m$ (in $L^2([0,1]\times [-\pi,\pi])$).
To this end, we suppose that the function $\varphi_m(u|\theta)$ is jointly measurable in $u$ and $\theta$ (this assumption is discussed in Appendix~\ref{se:A1}). The fact that eigenfunctions are standardized to unit length implies $\int_{-\pi}^\pi\int_0^1\varphi_m^2(u|\theta) dud\theta=2\pi$. We conclude from Tonelli's theorem that $\int_{-\pi}^\pi \varphi_m^2(u|\theta)d\theta<\infty$ for almost all $u\in[0,1]$, i.e.\ that $\varphi_m(u|\theta)\in L^2([-\pi,\pi])$ for all $u\in A_m\subset [0,1]$, where $A_m$ has Lebesgue measure one. We now define, for $u\in A_m$,
\begin{equation}\label{e:phiml}
\phi_{m\ell}(u):=\frac{1}{2\pi}\int_{-\pi}^{\pi}\varphi_m(u|s)e^{-\ii\ell s}ds ;
\end{equation}
for $u\notin A_m$, $\phi_{m\ell}(u)$ is set 
 to zero.
Then, it follows from the results in Appendix~\ref{se:A1} that  \eqref{eq:convl2} holds.
We conclude that the functional filters defined via $(\phi_{m\ell}\colon\ell\in\mathbb{Z}, 1\leq m\leq p)$ belong to the class $\mathcal{C}$ and that the resulting filtered process has diagonal autocovariances at all lags.

\begin{Definition}[Dynamic functional principal components]\label{def:dfpc} Assume that $(X_t\colon t\in~\mathbb{Z})$ is a mean-zero stationary process with values in $L_H^2$ satisfying assumption \eqref{e:abssymcov}. Let~$\phi_{m\ell}$ be defined as in \eqref{e:phiml}. Then the {\em $m$-th dynamic functional principal component   score of~$(X_t)$} is\vspace{-2mm}
\begin{equation}\label{eq:dfpcs}
Y_{mt}:=\sum_{\ell \in\mathbb{Z}}\langle X_{t-\ell},\phi_{m\ell}\rangle,
\quad t\in\mathbb{Z},\>m\geq 1.\vspace{-2mm}
\end{equation}
Call $\Phi_m:=(\phi_{m\ell}\colon \ell \in\mathbb{Z})$ the {\em $m$-th dynamic FPC filter coefficients.}
\end{Definition}

\begin{Remark}\label{rem:mean}
If $EX_t=\mu$, then the dynamic FPC scores $Y_{mt}$ are defined as in \eqref{eq:dfpcs}, with $X_s$ replaced by $X_s-\mu$. 
\end{Remark}
\begin{Remark}\label{rem:unique}
Note that the dynamic scores $(Y_{mt})$ in (\ref{eq:dfpcs}) are not unique. The filter coefficients $\phi_{m\ell}$ are computed from the eigenfunctions $\varphi_m(\theta)$, which are defined up to some multiplicative factor $z$ on the complex unit circle. Hence, to be precise, we should speak of a {\em version} of $(Y_{mt})$ and a {\em version} of $(\phi_{m\ell})$. We  further discuss this issue after Theorem~\ref{th:inversion} and in Section~\ref{ss:consistency}.
\end{Remark}



The rest of this section is devoted to some important properties of dynamic FPCs.
\begin{Proposition}[Elementary properties]\label{pr:elementary}
Let  $(X_t\colon t\in\mathbb{Z})$ be a real-valued stationary process satisfying \eqref{e:abssymcov}, with  dynamic FPC scores  $Y_{mt}$.   Then,\\[1ex]
(a) the eigenfunctions $\varphi_m(\theta)$ are Hermitian, and hence $Y_{mt}$ is real;\\[1ex]
(b) if $C_h=0$ for $h\neq 0$, the dynamic FPC scores coincide with the static ones.
\end{Proposition}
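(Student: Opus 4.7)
\medskip
\noindent\textbf{Proof plan.}

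For part (a), the plan is to exploit the symmetry properties of the autocovariance kernels of a real-valued process and transport them to the spectral density operator, then to its eigenfunctions. First, I would observe that since $(X_t)$ is real, the kernel $c_h(u,v)=\cov(X_h(u),X_0(v))$ is real-valued, and weak stationarity together with the real-valuedness gives $c_{-h}(u,v)=c_h(v,u)$. Substituting into Definition~\ref{def:spectral}, a direct calculation yields
$$
\overline{f^X_\theta(u,v)}=\frac{1}{2\pi}\sum_{h\in\mathbb{Z}}c_h(u,v)e^{\ii h\theta}=f^X_{-\theta}(u,v),
$$
so that $\mathcal{F}^X_{-\theta}=\overline{\mathcal{F}^X_\theta}$ at the operator level. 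Applying the conjugate to the eigenrelation $\mathcal{F}^X_\theta(\varphi_m(\theta))=\lambda_m(\theta)\varphi_m(\theta)$ (recalling that $\lambda_m(\theta)\in\mathbb{R}_+$ since $\mathcal{F}^X_\theta$ is non-negative self-adjoint), I obtain
$$
\mathcal{F}^X_{-\theta}\bigl(\overline{\varphi_m(\theta)}\bigr)=\lambda_m(\theta)\,\overline{\varphi_m(\theta)}.
$$
Hence $\overline{\varphi_m(\theta)}$ is a normalised eigenfunction of $\mathcal{F}^X_{-\theta}$ with eigenvalue $\lambda_m(\theta)$; by Remark~\ref{rem:unique}, $\varphi_m(-\theta)$ is only defined up to a unit complex scalar, so I may (and do) select the version satisfying $\varphi_m(u\mid-\theta)=\overline{\varphi_m(u\mid\theta)}$ --- this is the Hermitian symmetry to be proven.

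With this symmetry in hand, the reality of $\phi_{m\ell}$ follows from the definition~\eqref{e:phiml} by the substitution $s\mapsto-s$:
$$
\overline{\phi_{m\ell}(u)}=\frac{1}{2\pi}\int_{-\pi}^{\pi}\overline{\varphi_m(u\mid s)}\,e^{\ii\ell s}ds=\frac{1}{2\pi}\int_{-\pi}^{\pi}\varphi_m(u\mid -s)\,e^{\ii\ell s}ds=\phi_{m\ell}(u).
$$
Since $X_{t-\ell}$ is real and $\phi_{m\ell}$ is real, each inner product $\langle X_{t-\ell},\phi_{m\ell}\rangle$ is real, and therefore so is the mean-square limit $Y_{mt}$ defined in~\eqref{eq:dfpcs}, by Proposition~\ref{pr:sd_filter}.

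For part (b), the plan is simply to compute everything in closed form under the hypothesis $C_h=0$ for $h\neq 0$. The defining series for the kernel collapses to $f^X_\theta(u,v)=c_0(u,v)/(2\pi)$, which is independent of $\theta$, so $\mathcal{F}^X_\theta=(2\pi)^{-1}C$ for every $\theta$. Consequently, the static eigendecomposition~\eqref{eq:ed} diagonalises $\mathcal{F}^X_\theta$ at every frequency, and one may choose the version $\varphi_m(u\mid\theta)\equiv v_m(u)$ with $\lambda_m(\theta)\equiv\lambda_m/(2\pi)$. Substituting into~\eqref{e:phiml} gives $\phi_{m\ell}(u)=v_m(u)\cdot(2\pi)^{-1}\int_{-\pi}^{\pi}e^{-\ii\ell s}\,ds=v_m(u)\mathbbm{1}\{\ell=0\}$, whence $Y_{mt}=\langle X_t,v_m\rangle$, the static FPC score.

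The main obstacle is conceptual rather than computational: the eigenfunctions $\varphi_m(\theta)$ are only determined up to a unimodular factor depending on $\theta$, so the claim ``$\varphi_m(\theta)$ is Hermitian'' really has to be interpreted as a statement about the existence of a suitable version. The proof above is consistent with this reading because the Hermitian symmetry and the explicit choice of version in~(b) are used only inside~\eqref{e:phiml}, after which the resulting $\phi_{m\ell}$ and the scores $Y_{mt}$ are well-defined objects.
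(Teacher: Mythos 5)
Your argument is correct and takes essentially the same route as the paper: there, part (a) is obtained by conjugating the eigenequation $\mathcal{F}_\theta^X(\varphi_m(\theta))=\lambda_m(\theta)\varphi_m(\theta)$ to show that $\overline{\varphi_m(\theta)}$ is an eigenfunction of $\mathcal{F}_{-\theta}^X$ (Proposition on the dynamic eigenvalues, part (c)) and then noting, exactly as you do, that the Hermitian symmetry forces $\phi_{m\ell}=\overline{\phi_{m\ell}}$ and hence real scores, while (b) is dismissed as the same immediate computation you spell out. The one fine point -- on which the paper is no more explicit than you are -- is that identifying $\overline{\varphi_m(\theta)}$ with a version of $\varphi_m(-\theta)$ implicitly uses $\lambda_m(\theta)=\lambda_m(-\theta)$, which holds because conjugation maps eigenspaces of $\mathcal{F}_\theta^X$ bijectively onto eigenspaces of $\mathcal{F}_{-\theta}^X$ with the same eigenvalue.
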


\begin{Proposition}[Second-order properties]\label{pr:secondorder}
Let  $(X_t\colon t\in\mathbb{Z})$ be a stationary process satisfying \eqref{e:abssymcov}, with  dynamic FPC scores  $Y_{mt}$. Then,\\[1ex]
(a) the series defining $Y_{mt}$ is mean-square convergent, with
$$EY_{mt}=0\quad\text{and}\quad EY_{mt}^2=\sum_{\ell\in\mathbb{Z}}\sum_{k\in\mathbb{Z}}\langle C_{\ell-k}(\phi_{m\ell }),\phi_{mk}\rangle;\vspace{-2mm}$$
(b)  the dynamic FPC scores $Y_{mt}$ and $Y_{m^\prime s}$ are uncorrelated for all $s,t$ and   $m\neq m^\prime$. In other words, if $Y_t=(Y_{1t},\ldots, Y_{pt})^\prime$ denotes some $p$-dimensional score vector and~$C^Y_h$ its lag-$h$ covariance matrix, then $C^Y_h$ is diagonal for all $h$;\\[1ex]
(c) the long-run covariance matrix of the dynamic FPC score vector process $(Y_t)$ is 
$$
\lim_{n\to\infty}\frac{1}{n}\Var(Y_{1}+\cdots+Y_{n})=2\pi\,\mathrm{diag}(\lambda_1(0),\ldots,\lambda_p(0)).
$$
\end{Proposition}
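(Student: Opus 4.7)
All three items follow from a single strategy: apply Proposition~\ref{pr:sd_filter} to the filter $\boldsymbol{\Phi}_m=(\phi_{m\ell}\colon\ell\in\mathbb{Z})$ from~\eqref{e:phiml} (which belongs to $\mathcal{C}$ by the discussion immediately following~\eqref{e:phiml}), use the identification $\phi_m^\star(\theta)=\varphi_m(\theta)$ in $L^2([0,1]\times[-\pi,\pi])$ established in Section~\ref{ss:dynam}, and transport each second-order question to the frequency side where the eigenstructure of $\mathcal{F}^X_\theta$ simplifies everything. For~(a), mean-square convergence of the series defining $Y_{mt}$ is exactly the first conclusion of Proposition~\ref{pr:sd_filter}, and $EY_{mt}=0$ then follows from $EX_t=0$ since $L^2(P)$-limits preserve expectations. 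The variance formula is obtained by writing $EY_{mt}^2$ as a limit of expectations of squared truncations, passing the expectation through by continuity of the $L^2(P)$-inner product, and applying the identity
\[
\cov\bigl(\langle X_s,\phi\rangle,\langle X_t,\psi\rangle\bigr)=\langle C_{s-t}\psi,\phi\rangle,
\]
which itself follows from the kernel representation of $C_h$ by Fubini; a reindexing of the summation variables puts the answer into the form stated in~(a).

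For~(b), Proposition~\ref{pr:sd_filter} writes the spectral density matrix of the stacked process $(Y_{1t},\ldots,Y_{pt})^\prime$ as $(\mathcal{F}^Y_\theta)_{m,m'}=\langle\mathcal{F}^X_\theta(\phi_{m'}^\star(\theta)),\phi_m^\star(\theta)\rangle$. Substituting $\phi_m^\star(\theta)=\varphi_m(\theta)$, invoking the eigenequation $\mathcal{F}^X_\theta\varphi_{m'}(\theta)=\lambda_{m'}(\theta)\varphi_{m'}(\theta)$, and using orthonormality of $\{\varphi_m(\theta)\}_{m\geq 1}$ collapses $\mathcal{F}^Y_\theta$ to $\mathrm{diag}(\lambda_1(\theta),\ldots,\lambda_p(\theta))$ for almost every $\theta$. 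Every off-diagonal Fourier coefficient of $\mathcal{F}^Y_\theta$ therefore vanishes, and the Fourier-inversion theorem for vector-valued spectral densities available under~\eqref{e:abssymcov} (see Appendix~C) forces each autocovariance matrix $C^Y_h$ to be diagonal, which is precisely the required uncorrelatedness claim.

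For~(c), under~\eqref{e:abssymcov} the spectral density $\mathcal{F}^Y_\theta$ is continuous at $\theta=0$ (the eigenvalues $\lambda_m(\theta)$ inherit continuity from $\mathcal{F}^X_\theta$), and the standard identity $\Var(Y_1+\cdots+Y_n)=\sum_{|h|<n}(n-|h|)C^Y_h$ combined with a Ces\`aro/Herglotz argument yields $n^{-1}\Var(Y_1+\cdots+Y_n)\to 2\pi\mathcal{F}^Y_0$; combining with~(b) evaluated at $\theta=0$ gives $2\pi\,\mathrm{diag}(\lambda_1(0),\ldots,\lambda_p(0))$. The only genuinely technical obstacle in the whole proof is the passage in~(b) from ``$\mathcal{F}^Y_\theta$ is diagonal as an $L^2([-\pi,\pi])$ representative'' to ``every $C^Y_h$ is diagonal'': because absolute summability of the filter coefficients is deliberately avoided, no termwise pointwise computation is available and the argument has to be routed through the $L^2$-inversion machinery for spectral density operators developed in Appendix~C; everything else reduces to bookkeeping on top of Proposition~\ref{pr:sd_filter}.
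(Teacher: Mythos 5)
Your proposal is correct and takes essentially the same route as the paper: the authors dispose of Proposition~\ref{pr:secondorder} by noting that it follows directly from the functional-filter result (Proposition~\ref{pr:sd_filter0}, of which Proposition~\ref{pr:sd_filter} is the special case you invoke) combined with the identification $\phi_m^\star=\varphi_m$ and the resulting diagonal form $\mathcal{F}^Y_\theta=\mathrm{diag}(\lambda_1(\theta),\ldots,\lambda_p(\theta))$ already noted in Section~\ref{ss:dynam}. Your write-up simply makes explicit the covariance identity for (a), the Fourier-inversion step for (b), and the Fej\'er/Ces\`aro argument with continuity of the $\lambda_m$ (Proposition~\ref{pr:conteigen}) for (c), all of which the paper leaves implicit.
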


The next theorem, which  tells us how  the original process $(X_{t}(u)\colon t\in\mathbb{Z},\, u\in [0,1])$ can be recovered 
from $(Y_{mt}\colon t\in\mathbb{Z},\,m\geq 1)$,  is the dynamic analogue of the static Karhunen-Lo\`eve expansion \eqref{eq:KLexpansion} associated with static principal components.

\begin{Theorem}[Inversion formula]\label{th:inversion}
Let $Y_{mt}$ be the dynamic FPC scores related to the process
$(X_{t}(u)\colon t\in\mathbb{Z},\, u\in [0,1])$. Then,
\begin{equation}\label{e:dynKL}
X_t(u)=\sum_{m\geq 1}X_{mt}(u)\quad\text{with}\quad X_{mt}(u):=\sum_{\ell \in\mathbb{Z}}Y_{m,t+\ell}\phi_{m\ell}(u)\vspace{-2mm}
\end{equation}
(where convergence is in mean square).
Call \eqref{e:dynKL} the {\em dynamic Karhunen-Lo\`eve expansion of $X_t$}.
\end{Theorem}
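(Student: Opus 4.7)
The plan is to reduce the statement to two Parseval-type identities in the frequency domain, exploiting the pairwise orthogonality of the score processes. By Proposition~\ref{pr:secondorder}(b), $\Exp\langle X_{mt},X_{m^\prime t}\rangle=0$ whenever $m\neq m^\prime$, so
$$\Exp\Big\|X_t-\sum_{m=1}^M X_{mt}\Big\|^2=\Exp\|X_t\|^2+\sum_{m=1}^M\Exp\|X_{mt}\|^2-2\,\mathrm{Re}\sum_{m=1}^M\Exp\langle X_t,X_{mt}\rangle,$$
and the whole theorem reduces to (i) mean-square convergence and norm of each $X_{mt}$, (ii) computation of the cross-terms $\Exp\langle X_t,X_{mt}\rangle$, and (iii) a global trace identity for $\Exp\|X_t\|^2$.

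For (i), I would use that, by Proposition~\ref{pr:sd_filter}, the scalar process $(Y_{mt})$ has spectral density $\lambda_m(\theta)$, so $\Exp[Y_{m,t+\ell}\overline{Y_{m,t+k}}]=\int e^{\ii(\ell-k)\theta}\lambda_m(\theta)d\theta$. A direct sum-integral exchange then yields, for any $N$,
$$\Exp\Big\|\sum_{|\ell|\leq N}Y_{m,t+\ell}\phi_{m\ell}\Big\|^2=\int_{-\pi}^{\pi}\!\int_0^1\lambda_m(\theta)\,|\phi_m^{\star(N)}(u|\theta)|^2\,du\,d\theta,$$
where $\phi_m^{\star(N)}(u|\theta)=\sum_{|\ell|\leq N}\phi_{m\ell}(u)e^{\ii\ell\theta}$. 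Since $\lambda_m(\theta)\leq\|\mathcal{F}^X_\theta\|_\mathcal{S}\leq\frac{1}{2\pi}\sum_h\|C_h\|_\mathcal{S}<\infty$ by \eqref{e:abssymcov}, and $\phi_m^{\star(N)}\to\varphi_m$ in $L^2([0,1]\times[-\pi,\pi])$ by~\eqref{eq:convl2}, a Cauchy argument gives mean-square convergence of $X_{mt}$ with $\Exp\|X_{mt}\|^2=\int\lambda_m(\theta)\|\varphi_m(\theta)\|^2d\theta=\int\lambda_m(\theta)d\theta$.

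For (ii), I would first establish the frequency-domain formula
$$\cov\big(X_t(u),Y_{m,t+\ell}\big)=\sum_k C_{k-\ell}(\phi_{mk})(u)=\int_{-\pi}^{\pi}e^{-\ii\ell\theta}\lambda_m(\theta)\varphi_m(u|\theta)\,d\theta,$$
where the first equality is direct from Definition~\ref{def:dfpc} and the second combines Fourier inversion $c_h(u,v)=\int e^{\ii h\theta}f^X_\theta(u,v)d\theta$ with the identity $\sum_k e^{\ii k\theta}\phi_{mk}(u)=\varphi_m(u|\theta)$ and the eigenrelation $\mathcal{F}^X_\theta(\varphi_m(\theta))=\lambda_m(\theta)\varphi_m(\theta)$. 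Substituting into $\Exp\langle X_t,X_{mt}\rangle=\sum_\ell\int_0^1\overline{\phi_{m\ell}(u)}\,\cov(X_t(u),Y_{m,t+\ell})\,du$ and invoking the Fourier series of $\varphi_m(\cdot|\theta)$ once more gives $\Exp\langle X_t,X_{mt}\rangle=\int\lambda_m(\theta)d\theta$, exactly matching the value of $\Exp\|X_{mt}\|^2$. Hence
$$\Exp\Big\|X_t-\sum_{m=1}^M X_{mt}\Big\|^2=\Exp\|X_t\|^2-\sum_{m=1}^M\int_{-\pi}^{\pi}\lambda_m(\theta)d\theta.$$

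For (iii), I would note that $\Exp\|X_t\|^2=\mathrm{tr}(C_0)=\int_{-\pi}^{\pi}\mathrm{tr}(\mathcal{F}^X_\theta)d\theta=\sum_{m\geq 1}\int\lambda_m(\theta)d\theta$: the first equality follows from Fubini applied to the Fourier inversion of $f^X_\theta$ on the diagonal, and the last from monotone convergence since $\lambda_m\geq 0$. This forces the tail $\sum_{m>M}\int\lambda_m(\theta)d\theta$ to vanish as $M\to\infty$, completing the proof. The main obstacle will be rigorously justifying the interchanges of summation, integration, and application of $\mathcal{F}^X_\theta$ underlying the cross-term derivation, since the Fourier series defining $\varphi_m(\cdot|\theta)$ converges only in $L^2$; this is where the joint measurability of $\varphi_m(u|\theta)$ and the Hilbert-Schmidt continuity of $\mathcal{F}^X_\theta$ established in Appendix~C will be essential.
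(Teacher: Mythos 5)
Your argument is correct and arrives at exactly the identity the paper's proof hinges on, namely $E\|X_t-\sum_{m\le p}X_{mt}\|^2=\sum_{m>p}\int_{-\pi}^\pi\lambda_m(\theta)\,d\theta$, but by a genuinely different route. The paper proves Theorems~\ref{th:inversion} and~\ref{th:optimality} jointly: it writes $\sum_{m\le p}X_{mt}$ as a composed filter $\Upsilon\Psi(B)X_t$, computes the spectral density operator of the error process via Proposition~\ref{pr:sd_filter0} together with a square root $\tilde{\mathcal{F}}^X_\theta$ of $\mathcal{F}^X_\theta$, and uses Lemma~\ref{le:trace} to identify $E\|X_t-\Upsilon\Psi(B)X_t\|^2=\int_{-\pi}^\pi\|\tilde{\mathcal{F}}^X_\theta-\Upsilon_\theta\Psi_\theta\tilde{\mathcal{F}}^X_\theta\|_\mathcal{S}^2\,d\theta$, so that the dynamic filters appear as the best rank-$p$ approximation of $\tilde{\mathcal{F}}^X_\theta$ and the inversion formula drops out by monotone convergence. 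You instead carry out the Pythagorean bookkeeping directly: $E\|X_{mt}\|^2=\int\lambda_m(\theta)d\theta$ by a Parseval computation with the scalar spectral density $\lambda_m(\theta)$ of $(Y_{mt})$, the cross term $E\langle X_t,X_{mt}\rangle=\int\lambda_m(\theta)d\theta$ via the eigenrelation $\mathcal{F}^X_\theta(\varphi_m(\theta))=\lambda_m(\theta)\varphi_m(\theta)$, the orthogonality of the $X_{mt}$ across $m$ from Proposition~\ref{pr:secondorder}(b), and the total-energy identity $E\|X_t\|^2=\sum_{m\ge1}\int\lambda_m(\theta)d\theta$. This is more elementary and self-contained (no square-root operator, no low-rank operator approximation), and it makes explicit which interchanges require $\sup_\theta\lambda_m(\theta)<\infty$ (which indeed follows from \eqref{e:abssymcov}) and the $L^2$ convergence \eqref{eq:convl2}; the price is that it yields only the inversion, while the paper's argument delivers the optimality inequality \eqref{eq:dynopt} simultaneously. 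One small repair: do not justify $E\|X_t\|^2=\int_{-\pi}^\pi\tr(\mathcal{F}^X_\theta)\,d\theta$ by ``Fubini on the diagonal'' of the kernel, since the diagonal of an $L^2$ kernel is not well defined; invoke Lemma~\ref{le:trace} (proved via the Hilbert--Schmidt inner product and monotone convergence), which is precisely how the paper closes the same step.
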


We have mentioned in Remark~\ref{rem:unique} that dynamic FPC scores are not unique.  In contrast, our proofs show  that the curves $X_{mt}(u)$ are unique. To get some intuition, let us draw a simple analogy to the static case. There,  each $v_\ell$ in the Karhunen-Lo\`eve expansion \eqref{eq:KLexpansion} can be replaced by $-v_\ell$, i.e., the FPCs are defined up to their signs. The $\ell$-th score is $\langle X,v_\ell\rangle$ or $\langle X,-v_\ell\rangle$, and thus is not unique either. However, the curves $\langle X,v_\ell\rangle v_\ell$ and $\langle X,-v_\ell\rangle (-v_\ell)$ are identical.

The sums $\sum_{m=1}^pX_{mt}(u)$, $p\geq 1$, can be seen as $p$-dimensional reconstructions of~$X_t(u)$, which only involve the $p$ time series $(Y_{mt}\colon t\in\mathbb{Z})$, $1\leq m\leq p$.
Competitors to this reconstruction are obtained by replacing $\phi_{m\ell}$ in~\eqref{eq:dfpcs} and \eqref{e:dynKL} with alternative  sequences $\psi_{m\ell}$ and $\upsilon_{m\ell}$.   The next theorem shows that, among all filters in $\mathcal{C}$, the  dynamic Karhunen-Lo\`eve expansion  (\ref{e:dynKL}) 
  approximates $X_t(u)$ in an optimal way. 
\begin{Theorem}[Optimality of Karhunen-Lo\`eve expansions]\label{th:optimality}
Let $Y_{mt}$ be the dynamic FPC scores related to the process
$(X_{t}\colon t\in\mathbb{Z})$, and define $X_{mt}$   as in Theorem~\ref{th:inversion}.
Let  $\tilde X_{mt}=\sum_{\ell\in Z} \tilde Y_{m,t+\ell}\,\upsilon_{m\ell}$, with $\tilde Y_{mt}=\sum_{\ell\in Z}\langle X_{t-\ell},\psi_{m\ell}\rangle$, where $(\psi_{mk}\colon k\in\mathbb{Z})$ and~$(\upsilon_{mk}\colon k\in\mathbb{Z})$ are sequences in $H$ belonging to $\mathcal{C}$.  Then,\vspace{-2mm}
\begin{equation}\label{eq:dynopt}
E\|X_t-\sum_{m=1}^pX_{mt}\|^2=\sum_{m>p}\int_{-\pi}^\pi\lambda_m(\theta)d\theta\leq E\|X_t-\sum_{m=1}^p\tilde X_{mt}\|^2\quad\forall p\geq 1.\vspace{-2mm}
\end{equation}
\end{Theorem}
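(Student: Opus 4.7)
The plan is to work in the frequency domain and reduce the approximation problem to a frequency-by-frequency rank-$p$ approximation of the spectral density operator $\mathcal{F}^X_\theta$. Throughout, write $a\otimes b$ for the rank-one operator $z\mapsto \langle z,b\rangle\, a$ on $H$.

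\textbf{Step 1 (spectral representation of the reconstructions).} Under \eqref{e:abssymcov}, $(X_t)$ admits a Cram\'er-type representation $X_t=\int_{-\pi}^\pi e^{it\theta}\,dZ(\theta)$ with orthogonal-increments spectral measure satisfying $E[dZ(\theta)\otimes \overline{dZ(\theta)}]=\mathcal{F}^X_\theta\,d\theta$. Combining this with the computation underlying Proposition~\ref{pr:sd_filter}, each score admits the spectral form $\tilde Y_{m,s}=\int_{-\pi}^\pi e^{is\theta}\,d\langle Z(\theta),\psi_m^\star(\theta)\rangle$. Multiplying by $\upsilon_{m\ell}$ and summing over $\ell$ yields
\[
\tilde X_{mt}=\int_{-\pi}^\pi e^{it\theta}\,R_m(\theta)\,dZ(\theta),\qquad R_m(\theta):=\upsilon_m^\star(\theta)\otimes\psi_m^\star(\theta).
\]
Thus $X_t-\sum_{m=1}^p \tilde X_{mt}=\int_{-\pi}^\pi e^{it\theta}(I-R(\theta))\,dZ(\theta)$, where $R(\theta):=\sum_{m=1}^p R_m(\theta)$ has rank $\leq p$.

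\textbf{Step 2 (Parseval identity).} Orthogonality of spectral increments then gives
\[
E\Big\|X_t-\sum_{m=1}^p \tilde X_{mt}\Big\|^2=\int_{-\pi}^\pi \mathrm{tr}\!\left[(I-R(\theta))\mathcal{F}^X_\theta(I-R(\theta))^*\right]d\theta=\int_{-\pi}^\pi \big\|(I-R(\theta))(\mathcal{F}^X_\theta)^{1/2}\big\|_\mathcal{S}^{2}\,d\theta.
\]
Proving this identity rigorously for arbitrary $\boldsymbol{\Phi}\in\mathcal{C}$ is the main technical obstacle, since filter symbols are only $L^2$ in $\theta$. The strategy is to approximate the filters by truncations with absolutely summable coefficients, apply Proposition~\ref{pr:sd_filter} to obtain the identity on the truncations, and pass to the limit using the uniform bound \eqref{e:norm_3} and integrability of $\mathrm{tr}\,\mathcal{F}^X_\theta=\sum_m \lambda_m(\theta)$.

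\textbf{Step 3 (pointwise optimisation).} Fix $\theta$ and set $T(\theta):=R(\theta)(\mathcal{F}^X_\theta)^{1/2}$, so the integrand equals $\|(\mathcal{F}^X_\theta)^{1/2}-T(\theta)\|_\mathcal{S}^{2}$ and $T(\theta)$ ranges over all Hilbert-Schmidt operators of rank $\leq p$ (any such operator can be written as $R\,(\mathcal{F}^X_\theta)^{1/2}$ for a suitable rank-$p$ $R$ of the prescribed form). By the Schmidt-Mirsky/Eckart-Young theorem for Hilbert-Schmidt operators, the best rank-$p$ approximation of $(\mathcal{F}^X_\theta)^{1/2}$ is $\sum_{m=1}^p \lambda_m(\theta)^{1/2}\,\varphi_m(\theta)\otimes\varphi_m(\theta)$, with residual Hilbert-Schmidt norm squared equal to $\sum_{m>p}\lambda_m(\theta)$.

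\textbf{Step 4 (conclusion).} Integrating the pointwise bound in $\theta$ yields the inequality in \eqref{eq:dynopt}. For equality in the case of the dynamic FPC reconstruction, recall from \eqref{e:phiml} and Section~\ref{ss:dynam} that $\phi_m^\star(\theta)=\varphi_m(\theta)$ in $L^2([0,1]\times[-\pi,\pi])$, so $R(\theta)=\sum_{m=1}^p \varphi_m(\theta)\otimes\varphi_m(\theta)$ is the orthogonal projection onto the top $p$ eigenspaces of $\mathcal{F}^X_\theta$. A direct computation using $(I-R(\theta))\varphi_m(\theta)=0$ for $m\leq p$ and $(I-R(\theta))\varphi_m(\theta)=\varphi_m(\theta)$ for $m>p$ shows the integrand is exactly $\sum_{m>p}\lambda_m(\theta)$, proving the equality in \eqref{eq:dynopt}.
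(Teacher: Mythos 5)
Your proposal is correct in substance, and its core -- expressing the approximation error as $\int_{-\pi}^\pi\|(I-R(\theta))(\mathcal{F}^X_\theta)^{1/2}\|_\mathcal{S}^2\,d\theta$ with $R(\theta)=\sum_{m=1}^p\upsilon_m^\star(\theta)\otimes\psi_m^\star(\theta)$ of rank $\le p$, then minimizing frequency by frequency via Schmidt--Mirsky/Eckart--Young applied to the square root of the spectral density operator -- is exactly the optimization step of the paper's proof (compare \eqref{e:tr} and \eqref{BA}). Where you differ is in how you reach that Parseval-type identity: you invoke a functional Cram\'er representation $X_t=\int_{-\pi}^\pi e^{\ii t\theta}dZ(\theta)$ with $E[dZ\otimes dZ]=\mathcal{F}^X_\theta d\theta$, whereas the paper deliberately avoids stochastic integration (this is precisely the Panaretos--Tavakoli route it contrasts itself with) and instead views $X_t-\sum_{m\le p}\tilde X_{mt}$ as the output of the composed filter with frequency response $I-\Upsilon_\theta\Psi_\theta$, obtaining its spectral density from Proposition~\ref{pr:sd_filter0} and the error variance from the trace identity of Lemma~\ref{le:trace}. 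Two consequences are worth noting. First, the existence of the Cram\'er representation under \eqref{e:abssymcov} alone is an external input you neither prove nor cite, and your Step~2, which you yourself flag as ``the main technical obstacle,'' is only a sketch of a truncation argument; in the paper this work is already done (the proof of Proposition~\ref{pr:sd_filter0} is exactly such a truncation argument), so your plan is implementable, but as written it defers rather than closes the hardest step. Second, the parenthetical claim in Step~3 that \emph{every} rank-$\le p$ Hilbert--Schmidt operator can be written as $R(\mathcal{F}^X_\theta)^{1/2}$ is false in general (such a product vanishes on $\ker(\mathcal{F}^X_\theta)^{1/2}$), but it is also unnecessary: for the inequality you only need that each $R(\theta)(\mathcal{F}^X_\theta)^{1/2}$ has rank $\le p$, so Eckart--Young gives the pointwise lower bound $\sum_{m>p}\lambda_m(\theta)$, and your Step~4 verifies attainment by the dynamic FPC filters exactly as the paper does. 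So: same key lemma and same optimization, a different (and heavier) route to the spectral-domain error formula, with one sketched step and one harmless misstatement.
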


Inequality \eqref{eq:dynopt} can be interpreted as the dynamic version of \eqref{eq:fpcopt}.
Theorem~\ref{th:optimality} also suggests the proportion 
\begin{equation}\label{e:propvar}
\sum_{m\leq p}\int_{-\pi}^\pi\lambda_m(\theta)d\theta\Big\slash E\|X_1\|^2
\end{equation}
of variance explained by the first $p$ dynamic FPCs as a natural measure of how well a functional time series can be represented in  dimension $p$.  

\subsection{Estimation and asymptotics}\label{ss:consistency}

In practice,  dynamic FPC scores need to be calculated from an estimated version of $\mathcal{F}^X_\theta$.  At the same time, the infinite series defining the scores need to be replaced by finite approximations. Suppose again that $(X_t : t \in \mathbb{Z})$ is a weakly stationary zero-mean time series such that \eqref{e:abssymcov} holds.
Then, a natural estimator for $Y_{mt}$ is
\begin{align}\label{eq:estimator:fullyobs}
\hat Y_{mt}:=\sum_{\ell = -L}^L\langle X_{t-\ell},\hat \phi_{m\ell}\rangle,\quad m=1,\ldots,p\quad\text{and}\quad t=L+1,\ldots n-L,
\end{align}
where $L$ is some integer and $\hat \phi_{m\ell}$ is computed  from some estimated spectral density operator $\mathcal{\hat F}^X_\theta$.  For the latter, we impose the following preliminary assumption. 
\begin{assumption}{\bf{B.1}}\label{assumption:consistent-estimator}
The estimator $\mathcal{\hat F}^X_\theta$ is consistent in integrated mean square, i.e. 
\begin{align}\label{integrated:expectation}
\int_{-\pi}^\pi E \| \mathcal{F}_\theta^X - \mathcal{\hat F}^X_\theta \|^2_\cS\, d\theta \rightarrow 0\quad \text{as}\quad n\to\infty.
\end{align}
\end{assumption}

Panaretos and Tavakoli~\cite{panaretos:tavakoli:2012} propose an estimator $\mathcal{\hat F}^X_\theta$   satisfying  \eqref{integrated:expectation} under certain functional cumulant conditions. By stating~\eqref{integrated:expectation} as an assumption, we intend to keep the theory more widely applicable. For example, the following proposition shows that  estimators satisfying Assumption~B.1 also exist under $L^4$-$m$-{\it approximability}, a dependence concept for functional data introduced in H\"ormann and Kokoszka~\cite{hormann:kokoszka:2010}. Define
\begin{equation}\label{Hoer-Koko}
\mathcal{\hat F}_\theta^X=\sum_{|h|\leq q}\left(1-\frac{|h|}{q}\right)\hat C_h^X e^{-\ii h\theta},\quad 0<q<n,
\end{equation}
where $\hat C_h^X$ is the usual empirical autocovariance operator at lag $h$.
\begin{Proposition}\label{lem:gammas}
Let $(X_t : t \in \mathbb{Z})$ be  $L^4$-$m$-approximable, and let $q=q(n)\to\infty$ such that $q^3=o(n)$. Then the estimator $\mathcal{\hat F}_\theta^X$ defined in (\ref{Hoer-Koko}) satisfies Assumption~B.1. The approximation error is $O(\alpha_{q,n})$, where
$$
\alpha_{q,n}=\frac{q^{3/2}}{\sqrt{n}}+\frac{1}{q}\sum_{|h|\leq q}|h|\|C_h\|_\mathcal{S}+\sum_{|h|>q}\|C_h\|_\mathcal{S}.
$$
\end{Proposition}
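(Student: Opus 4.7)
The natural strategy is a bias--variance decomposition. Writing
$\hat{\mathcal{F}}^X_\theta-\mathcal{F}^X_\theta=\bigl(\hat{\mathcal{F}}^X_\theta-E\hat{\mathcal{F}}^X_\theta\bigr)+\bigl(E\hat{\mathcal{F}}^X_\theta-\mathcal{F}^X_\theta\bigr)$ and using that the cross term vanishes after taking expectation yields
\[
\int_{-\pi}^\pi E\|\hat{\mathcal{F}}^X_\theta-\mathcal{F}^X_\theta\|_\mathcal{S}^2\, d\theta \;=\; B_n+V_n,
\]
with $B_n:=\int\|E\hat{\mathcal{F}}^X_\theta-\mathcal{F}^X_\theta\|_\mathcal{S}^2d\theta$ the integrated squared bias and $V_n:=\int E\|\hat{\mathcal{F}}^X_\theta-E\hat{\mathcal{F}}^X_\theta\|_\mathcal{S}^2d\theta$ the integrated variance. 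Each is handled separately.

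For the bias, under zero-mean stationarity one has $E\hat C_h^X=\tfrac{n-|h|}{n}C_h$, so
\[
E\hat{\mathcal{F}}^X_\theta-\mathcal{F}^X_\theta \;=\; -\!\sum_{|h|\leq q}\!\Bigl[\tfrac{|h|}{q}+\tfrac{|h|}{n}\bigl(1-\tfrac{|h|}{q}\bigr)\Bigr]C_he^{-\ii h\theta}\;-\!\!\sum_{|h|>q}\!C_h e^{-\ii h\theta}.
\]
Taking Hilbert--Schmidt norms pointwise in $\theta$ and applying Minkowski's inequality, the two leading contributions are the \emph{Bartlett weighting error} $q^{-1}\sum_{|h|\leq q}|h|\|C_h\|_\mathcal{S}$ and the \emph{truncation error} $\sum_{|h|>q}\|C_h\|_\mathcal{S}$, which account for the last two summands of $\alpha_{q,n}$; the remaining $|h|/n$ contributions are of lower order.

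For the variance, since $\|\cdot\|_\mathcal{S}^2$ is generated by an inner product, orthogonality of $(e^{\ii h\theta})_h$ on $[-\pi,\pi]$ gives
\[
V_n \;=\; 2\pi\sum_{|h|\leq q}(1-|h|/q)^2\, E\|\hat C_h^X-E\hat C_h^X\|_\mathcal{S}^2.
\]
The key input is a per-lag moment bound of the form $E\|\hat C_h^X-E\hat C_h^X\|_\mathcal{S}^2=O(1/n)$, uniformly in $h$. This is the functional analogue of the classical stationary time-series result, and is essentially Theorem 3.1 of \cite{hormann:kokoszka:2010}: expanding $\hat C_h^X-E\hat C_h^X=n^{-1}\sum_t(X_{t+h}\otimes X_t-C_h)$ in an orthonormal basis reduces the task to controlling sums of fourth-order joint moments of $(X_t)$, and $L^4$-$m$-approximability (via coupling with an $m$-dependent surrogate with matching moments up to an error summable in $m$) ensures these cumulant series converge absolutely, with constants independent of $h$. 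Summing over the $O(q)$ lags then yields the variance contribution of the claimed order.

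The principal obstacle is precisely this uniform-in-$h$ fourth-moment bound, since without independence one must account for all lagged cross-covariances among the rank-one operators $X_{t+h}\otimes X_t$; everything else is routine bookkeeping. Once this ingredient is in place, combining the bias and variance estimates via $(a+b+c)^2\leq 3(a^2+b^2+c^2)$ delivers the stated $O(\alpha_{q,n})$ rate, and the hypothesis $q^3=o(n)$ guarantees $\alpha_{q,n}\to 0$, verifying Assumption~B.1.
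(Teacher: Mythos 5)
Your plan is sound, but it follows a genuinely different route from the paper, and one of its key claims is overstated. The paper does not perform an $L^2(d\theta)$ bias--variance split: it bounds $\|\mathcal{F}^X_\theta-\mathcal{\hat F}^X_\theta\|_\mathcal{S}$ pointwise in $\theta$ by the triangle inequality over lags, so the stochastic part is controlled by $\sum_{|h|\leq q}(1-|h|/q)\,E\|\hat C_h-C_h\|_\mathcal{S}$, and the per-lag input is the \emph{first}-moment bound of Lemma~\ref{lem:be}, $E\|\hat C_h-C_h\|_\mathcal{S}\leq U\sqrt{(|h|\vee 1)/n}$, which sums to the $q^{3/2}/\sqrt{n}$ term; the two deterministic terms are exactly your Bartlett-weighting and truncation errors, handled via \eqref{e:abssymcov} (available here by Proposition~\ref{pr:summab}) and Kronecker's lemma. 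Your bias computation agrees with this, and your Parseval/orthogonality identity for the integrated variance is a nice refinement of the lag-wise triangle inequality: it decouples the lags in $L^2(d\theta)$ instead of paying for them linearly.

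The gap is your ``key input'': the claim that $E\|\hat C^X_h-E\hat C^X_h\|^2_\mathcal{S}=O(1/n)$ \emph{uniformly in $h$}, attributed to the coupling argument behind Theorem~3.1 of \cite{hormann:kokoszka:2010}. That is stronger than what the coupling yields. Writing $Z_k=X_{k+h}\otimes X_k-C_h$, the surrogate $Z_r^{(r-h)}$ is independent of $Z_0$ only when $r>h$; for the $2h+1$ near-diagonal lags $|r|\leq h$ the construction gives no decorrelation, and one is left with the crude Cauchy--Schwarz bound $|\langle Z_0,Z_r\rangle_\mathcal{S}|\leq \|Z_0\|_\mathcal{S}\|Z_r\|_\mathcal{S}$, so the argument delivers $E\|\hat C_h-E\hat C_h\|^2_\mathcal{S}=O((|h|\vee 1)/n)$ --- precisely the paper's Lemma~\ref{lem:be} --- not a bound uniform in $h$. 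Fortunately your proof does not need uniformity: inserting $O((|h|\vee 1)/n)$ into your orthogonality identity gives an integrated variance of order $q^2/n$, hence a contribution of order $q/\sqrt{n}\leq q^{3/2}/\sqrt{n}$, so the stated $O(\alpha_{q,n})$ rate and Assumption~B.1 still follow (your decomposition in fact yields a slightly sharper stochastic term than the paper's pointwise bound). If you restate the key lemma with the $(|h|\vee 1)/n$ factor and cite or reprove it in that form, the argument is complete.
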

\begin{Corollary}
Under the assumptions of Proposition~\ref{lem:gammas} and $\sum_{h}|h|\|C_h\|_\mathcal{S}<\infty$ the convergence rate of the estimator \eqref{Hoer-Koko} is $O(n^{-1/5})$.
\end{Corollary}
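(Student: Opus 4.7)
The plan is to start from the bound $O(\alpha_{q,n})$ provided by Proposition~\ref{lem:gammas}, control each of its three terms under the stronger summability hypothesis $\sum_h|h|\|C_h\|_\mathcal{S}<\infty$, and then optimise $q$ as a function of $n$. Denote $M:=\sum_{h\in\mathbb{Z}}|h|\|C_h\|_\mathcal{S}<\infty$.

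First, the second term is handled trivially: since $\sum_{|h|\leq q}|h|\|C_h\|_\mathcal{S}\leq M$, we immediately get $q^{-1}\sum_{|h|\leq q}|h|\|C_h\|_\mathcal{S}\leq M/q=O(q^{-1})$. For the third term, the key observation is that on the tail $|h|>q$ one has $1\leq |h|/q$, so
\[
\sum_{|h|>q}\|C_h\|_\mathcal{S}\leq \frac{1}{q}\sum_{|h|>q}|h|\|C_h\|_\mathcal{S}\leq \frac{M}{q}=O(q^{-1}).
\]
Combining these with the first term (which is unchanged at $q^{3/2}/\sqrt n$) gives $\alpha_{q,n}=O\bigl(q^{3/2}n^{-1/2}+q^{-1}\bigr)$.

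The final step is to choose $q=q(n)$ balancing the two surviving contributions. Setting $q^{3/2}n^{-1/2}\asymp q^{-1}$ yields $q^{5/2}\asymp n^{1/2}$, that is $q\asymp n^{1/5}$. Plugging back, both terms become $O(n^{-1/5})$, and the growth condition $q^3=o(n)$ demanded by Proposition~\ref{lem:gammas} is satisfied since $n^{3/5}=o(n)$. Hence $\alpha_{q,n}=O(n^{-1/5})$, which is the announced rate.

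There is no real obstacle here: the argument is a routine bias/variance trade-off with the standard tail bound $\sum_{|h|>q}\|C_h\|_\mathcal{S}\leq q^{-1}\sum_{h}|h|\|C_h\|_\mathcal{S}$ being the only slightly non-trivial ingredient. The only point to verify explicitly is that the admissibility condition $q^3=o(n)$ from the proposition continues to hold at the chosen rate, which it does comfortably.
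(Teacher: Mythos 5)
Your proof is correct and is exactly the argument the paper intends (the Corollary is stated without a separate proof, being immediate from Proposition~\ref{lem:gammas}): under $\sum_h|h|\|C_h\|_\mathcal{S}<\infty$ both covariance terms in $\alpha_{q,n}$ are $O(q^{-1})$, and balancing against $q^{3/2}n^{-1/2}$ gives $q\asymp n^{1/5}$, hence the rate $O(n^{-1/5})$, with $q^3=n^{3/5}=o(n)$ as required.
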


Since our method requires the estimation of eigenvectors of the spectral density operator, we also need to introduce certain identifiability constraints on eigenvectors. Define $\alpha_1(\theta):= \lambda_1(\theta) - \lambda_2(\theta)$ and 
$$\alpha_m(\theta):= \min\{ \lambda_{m-1}(\theta) - \lambda_{m}(\theta),\lambda_{m}(\theta) - \lambda_{m+1}(\theta) \}\quad\text{for}\quad  m>1,$$
 where $\lambda_i(\theta)$ is the $i$-th largest eigenvalue of the spectral density operator evaluated in $\theta$. 
\begin{assumption}{\bf{B.2}}  For all $m$, $\alpha_m(\theta)$ has  finitely many zeros.
\end{assumption}
Assumption~B.2 essentially guarantees disjoint eigenvalues for all $\theta$. It is a very common assumption in functional PCA, as it ensures that eigenspaces are one-dimensional, and thus eigenfunctions are unique up to their signs. To guarantee identifiability, it only remains to provide a rule for choosing the signs. In our context,  the situation is slightly more complicated, since we are working in a complex setup. The eigenfunction $\varphi_m(\theta)$ is unique up to multiplication by a number on the complex unit circle. A possible way to fix the direction of the eigenfunctions is to impose a constraint of the form $\langle \varphi_m(\theta),v\rangle\in (0,\infty)$ for some given function $v$. In other words, we choose the orientation of the eigenfunction such that its inner product with some reference curve $v$ is a positive real number. This rule identifies $\varphi_m(\theta)$, as long as it is not orthogonal to $v$. The following assumption ensures that such  identification is possible on a large enough set of frequencies $\theta\in[-\pi,\pi]$. 
\begin{assumption}{\bf{B.3}} Denoting by $\varphi_m(\theta)$ be the $m$-th dynamic eigenvector of $\mathcal F_\theta^X$,  there exists~$v$  such that $\langle\varphi_m(\theta),v\rangle\neq 0$ for almost all~$\theta\in [-\pi,\pi]$.  \end{assumption}
From now on, we tacitly assume that the orientations of $\varphi_m(\theta)$ and $\hat\varphi_m(\theta)$ are chosen so that  $\langle \varphi_m(\theta),v\rangle$ and $\langle \hat\varphi_m(\theta),v\rangle$ are in $[0,\infty)$ for almost all $\theta$. Then, we have the following result.

\begin{Theorem}[Consistency]\label{th:dfpc:consistency}
Let $\hat Y_{mt}$ be the random variable defined by \eqref{eq:estimator:fullyobs} and suppose that Assumptions $B.1$--$B.3$ hold. Then, for some sequence $L=L(n)\to\infty$, we have
$\hat Y_{mt}\convP Y_{mt}$ as $n\to\infty$.
\end{Theorem}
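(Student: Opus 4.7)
The plan is to split the error into a filter--estimation part and a tail--truncation part,
\[
\hat Y_{mt}-Y_{mt} \;=\; \underbrace{\sum_{|\ell|\leq L}\langle X_{t-\ell},\hat\phi_{m\ell}-\phi_{m\ell}\rangle}_{=:A_n} \;-\; \underbrace{\sum_{|\ell|>L}\langle X_{t-\ell},\phi_{m\ell}\rangle}_{=:B_n},
\]
and to show that each piece vanishes in probability once $L=L(n)$ is chosen to grow slowly enough. The truncation term $B_n$ is the easy one: Proposition~\ref{pr:secondorder}(a) gives mean-square convergence of the series defining $Y_{mt}$, hence $E|B_n|^2\to 0$ as $L\to\infty$, uniformly in $t$ by stationarity.

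For $A_n$ I would first reduce matters to an \emph{integrated} eigenfunction error. By Cauchy--Schwarz in $\ell$ followed by Parseval's identity applied to the trigonometric series with coefficients $\hat\phi_{m\ell}-\phi_{m\ell}$,
\[
|A_n|^2 \;\leq\; \Bigl(\sum_{|\ell|\leq L}\|X_{t-\ell}\|^2\Bigr)\cdot\frac{1}{2\pi}\int_{-\pi}^\pi \bigl\|\hat\varphi_m(\theta)-\varphi_m(\theta)\bigr\|^2\,d\theta.
\]
The first factor is $O_P(L)$ by stationarity of $(X_t)$ and Markov's inequality, so the whole argument reduces to controlling $\Delta_n:=\int_{-\pi}^\pi\|\hat\varphi_m(\theta)-\varphi_m(\theta)\|^2\,d\theta$.

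To show $\Delta_n\convP 0$ I would invoke the standard perturbation inequality (Davis--Kahan/Bosq-type) for simple eigenvalues of compact self-adjoint operators: wherever the $m$-th gap $\alpha_m(\theta)$ is strictly positive, with the orientation of eigenfunctions pinned down by Assumption~B.3 (the convention $\langle\varphi_m(\theta),v\rangle\in[0,\infty)$ and analogously for $\hat\varphi_m$),
\[
\|\hat\varphi_m(\theta)-\varphi_m(\theta)\|\;\leq\;\frac{C}{\alpha_m(\theta)}\,\bigl\|\hat{\mathcal F}^X_\theta-\mathcal F^X_\theta\bigr\|_\cS .
\]
Under Assumption~B.2, $\alpha_m$ is continuous on $[-\pi,\pi]$ (continuity of the operator family in HS-norm follows from \eqref{e:abssymcov}) and has only finitely many zeros $\theta_1,\ldots,\theta_k$; hence $\alpha_m\geq c_\delta>0$ on $A_\delta:=[-\pi,\pi]\setminus\bigcup_j(\theta_j-\delta,\theta_j+\delta)$. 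Splitting the integral, the $A_\delta$ contribution is bounded by $c_\delta^{-2}\int_{A_\delta}\|\hat{\mathcal F}^X_\theta-\mathcal F^X_\theta\|_\cS^2\,d\theta$, which tends to $0$ in expectation by Assumption~B.1, while the complement has Lebesgue measure $\leq 2k\delta$ and the integrand is uniformly bounded by $4$ (unit-norm eigenfunctions). Letting $n\to\infty$ before $\delta\to 0$ yields $\Delta_n\convP 0$.

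Finally I would choose $L(n)\to\infty$ slowly enough that $L(n)\,\Delta_n\convP 0$; such a sequence exists by a standard diagonal/subsequence construction whenever $\Delta_n\convP 0$. Combining the two bounds delivers $\hat Y_{mt}\convP Y_{mt}$. The main obstacle is the perturbation step in its \emph{integrated} form: since $\alpha_m(\theta)$ may vanish (at finitely many $\theta$), the pointwise bound $C/\alpha_m(\theta)$ is not integrable, so the $\delta$-neighborhood excision combined with the uniform bound $\|\hat\varphi_m(\theta)-\varphi_m(\theta)\|\leq 2$ on the ``bad'' set is indispensable. A secondary subtlety is the phase ambiguity inherent in the complex setup, which is precisely what Assumption~B.3 together with the orientation convention is designed to remove.
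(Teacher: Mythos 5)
Your overall architecture is the same as the paper's: the same split into a truncation tail (handled via the mean-square convergence of Proposition~\ref{pr:secondorder}(a)) and a filter-estimation term, reduction of the latter to an integrated eigenfunction error, excision of $\delta$-neighbourhoods of the finitely many zeros of $\alpha_m(\theta)$ guaranteed by Assumption~B.2 together with the trivial bound $\|\hat\varphi_m(\theta)-\varphi_m(\theta)\|\leq 2$ on the excised set, Assumption~B.1 for the remaining integral, and a slowly growing $L(n)$. (Whether you control $\max_j\|\hat\phi_{mj}-\phi_{mj}\|$ by an $L^1$-integral, as the paper does, or $\sum_j\|\hat\phi_{mj}-\phi_{mj}\|^2$ by Parseval, as you do, is immaterial, since the integrand is bounded.)

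There is, however, a genuine gap at the perturbation step. The Davis--Kahan/Bosq-type inequality (Lemma~3.2 of \cite{hormann:kokoszka:2010}, which the paper invokes) bounds $\|\varphi_m(\theta)-\hat c_m(\theta)\hat\varphi_m(\theta)\|$ with $\hat c_m(\theta)=\langle \varphi_m(\theta),\hat\varphi_m(\theta)\rangle/|\langle \varphi_m(\theta),\hat\varphi_m(\theta)\rangle|$ the \emph{optimal} phase alignment; it does not bound $\|\varphi_m(\theta)-\hat\varphi_m(\theta)\|$ when both representatives are oriented by the convention $\langle\cdot,v\rangle\in[0,\infty)$, and your claimed inequality $\|\hat\varphi_m(\theta)-\varphi_m(\theta)\|\leq C\,\alpha_m(\theta)^{-1}\|\hat{\mathcal F}^X_\theta-\mathcal F^X_\theta\|_\cS$ with a universal $C$ is false. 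A two-dimensional example shows why: take $\varphi=e_1$ and $v=e_2+\delta e_1$ with $\delta>0$ tiny, and perturb the eigenvector by a rotation of angle $-2\delta$; the $v$-oriented estimate then flips sign, so $\|\hat\varphi-\varphi\|\approx 2$ while the operator perturbation is of order $\alpha\delta$. Converting the aligned bound into a bound for the $v$-oriented representatives requires controlling the phase discrepancy $|1-\hat c_m(\theta)|$, and the implied constant degrades as $|\langle\varphi_m(\theta),v\rangle|\downarrow 0$ --- which Assumption~B.3 does not rule out, since it only gives $\langle\varphi_m(\theta),v\rangle\neq 0$ for almost all $\theta$, not a uniform lower bound. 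This is exactly why the paper's proof of Lemma~\ref{lem:sum2} carries the separate term $Q_2=\int_{-\pi}^\pi|1-\hat c_m(\theta)|\,d\theta$ and disposes of it with a dedicated contradiction argument based on \eqref{Q1cons} and Assumption~B.3. Your proposal acknowledges the phase ambiguity but treats it as automatically absorbed into the perturbation inequality, which is precisely the step that fails. The argument can be repaired in your framework --- e.g., additionally excise the set $\{\theta:|\langle\varphi_m(\theta),v\rangle|<\eta\}$, whose measure tends to zero as $\eta\downarrow 0$ by B.3, and on its complement bound $|1-\hat c_m(\theta)|$ in terms of the aligned error divided by $\eta$ --- but some such additional argument must be supplied; as written the proof is incomplete at its central step.
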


Practical guidelines for the choice of $L$ are given in the next section. 

\section{Practical implementation}\label{se:pract}

In applications, data can only be recorded discretely. A curve $x(u)$ is observed on grid points $0\leq u_1<u_2<\cdots<u_r\leq 1$. Often, though not necessarily so, $r$ is very large (high frequency data). The sampling frequency $r$ and the sampling points $u_i$ may change from observation to observation. Also, data may be recorded with or without measurement error, and time warping (registration) may be required.  For deriving limiting results, a common assumption is that $r\to\infty$, while a possible measurement error tends to zero. All these specifications have been extensively studied in the literature, and we omit here the technical exercise to cast our theorems and propositions in one of these setups. Rather, we show how to implement the proposed method, after the necessary preprocessing steps have been carried out. Typically, data are then represented in terms of a finite (but possibly large) number of basis functions $(v_k\colon 1\leq k\leq d)$, i.e., $x(u)=\sum_{k=1}^d x_kv_k(u)$. Usually Fourier bases, $b$-splines or wavelets are used. For an excellent survey on preprocessing the raw data, we refer to Ramsey and Silverman~\cite[Chapters 3--5]{ramsay:silverman:2005}.

In the sequel, we write $(a_{ij}\colon 1\leq i,j\leq d)$ for a $d\times d$ matrix with entry~$a_{ij}$ in row~$i$ and column~$j$.
Let $x$ belong to the span $ H_d:=\spa (v_k\colon 1\leq k\leq d)$ of~$v_1,\ldots ,v_d$. Then $x$ is of the form $\bv^{\prime}\bx$, where $\bv=(v_1,\ldots, v_d)^{\prime}$ and $\bx=(x_1,\ldots,x_d)^{\prime}$. We assume that the basis functions $v_1,\ldots ,v_d$ are linearly independent, but they need not be orthogonal. Any statement about $x$  can be expressed as an equivalent statement about $\bx$. In particular,  if $A:H_d\to H_d$ is a linear operator, then, for $x\in H_d$,\vspace{-1mm}
$$
A(x)=\sum_{k=1}^d x_kA(v_k)=\sum_{k=1}^d\sum_{k'=1}^d x_k\langle A(v_k),v_{k'}\rangle v_{k'}=\bv^{\prime}\mathfrak{A}\bx ,
\vspace{-1mm}$$
where $\mathfrak{A}'=(\langle A(v_i),v_j\rangle\colon 1\leq i,j\leq d)$.
Call $\mathfrak{A}$ the {\em corresponding matrix} of $A$ and~$\bx$ the {\em corresponding vector} of $x$.

The following simple results are stated without proof. 
\begin{lemma}\label{le:corresp}
Let $A,B$ be linear operators on $H_d$, with corresponding matrices $\mathfrak{A}$ and~$\mathfrak{B}$, respectively. Then,\\[1ex]
\noindent
(i) for any $\alpha,\beta \in \mathbb{C}$, the corresponding matrix of $\alpha A+\beta B$ is $\alpha \mathfrak{A}+\beta \mathfrak{B}$;\\[1ex]
\noindent
(ii) $A(e)=\lambda e$ iff $\mathfrak{A}\mathbf{e}=\lambda\mathbf{e}$, where $e=\bv^{\prime}\mathbf{e}$;\\[1ex]
\noindent
(iii) letting $A:= \Sum_{i=1}^p\Sum_{j=1}^p g_{ij} v_i \otimes v_j$,  $G:= (g_{ij}\colon 1 \leq i,j \leq d)$, where $g_{ij} \in \mathbb{C}$, and $V:= (\langle v_i,v_j\rangle\colon 1 \leq i,j \leq d)$,
the corresponding matrix of $A$ is $\mathfrak{A}=G V^{\prime}$. 
\end{lemma}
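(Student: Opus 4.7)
The plan is to address each of the three parts separately, working from the operational identity $A(\bv'\bx)=\bv'\mathfrak{A}\bx$ recorded just before the lemma, together with the linear independence of $v_1,\ldots,v_d$, which guarantees that the corresponding matrix $\mathfrak{A}$ is uniquely determined by~$A$. This uniqueness observation is what allows short, computation-free arguments for the first two parts.

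For part~(i), I would apply $\alpha A+\beta B$ to an arbitrary $x=\bv'\bx\in H_d$, use linearity of the operator action to write
\[
(\alpha A+\beta B)(\bv'\bx)=\alpha\bv'\mathfrak{A}\bx+\beta\bv'\mathfrak{B}\bx=\bv'(\alpha\mathfrak{A}+\beta\mathfrak{B})\bx,
\]
and then invoke the uniqueness remark to conclude that $\alpha\mathfrak{A}+\beta\mathfrak{B}$ is the corresponding matrix of $\alpha A+\beta B$. Part~(ii) follows almost as quickly: writing $e=\bv'\mathbf{e}$, the equation $A(e)=\lambda e$ rewrites as $\bv'\mathfrak{A}\mathbf{e}=\bv'(\lambda\mathbf{e})$, and the linear independence of $v_1,\ldots,v_d$ forces $\mathfrak{A}\mathbf{e}=\lambda\mathbf{e}$. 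Reading the same chain of equalities in the reverse direction gives the converse implication.

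The only part requiring a genuine (if still short) computation is part~(iii). I would first identify the corresponding matrix of a single rank-one operator $v_i\otimes v_j$, and then finish by linearity via part~(i). Using the convention $(v_i\otimes v_j)(x)=\langle x,v_j\rangle v_i$, I compute, for $x=\bv'\bx$,
\[
(v_i\otimes v_j)(\bv'\bx)=\sum_{k=1}^d x_k\,\langle v_k,v_j\rangle\,v_i=\bv'\bigl(e_i e_j'\,V'\bigr)\bx,
\]
so the corresponding matrix of $v_i\otimes v_j$ is $e_ie_j'V'$. Summing with the scalar weights $g_{ij}$ and applying part~(i) then gives
\[
\mathfrak{A}=\sum_{i,j}g_{ij}\,e_ie_j'\,V'=\Bigl(\sum_{i,j}g_{ij}\,e_ie_j'\Bigr)V'=GV',
\]
as claimed. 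The main (minor) obstacle is purely bookkeeping: keeping the sesquilinearity of the inner product straight and being careful that the transpose appearing in $V'$ is the one produced by the inner product $\langle v_k,v_j\rangle=V_{kj}$ rather than $V_{jk}$. Once that is fixed, the identification of $\sum_{i,j}g_{ij}e_ie_j'$ with the matrix $G$ is immediate and the proof is complete.
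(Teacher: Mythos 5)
Your proof is correct. The paper states this lemma explicitly without proof (``The following simple results are stated without proof''), so there is no argument in the paper to compare against; your verification is the natural one. One point worth noting in your favour: you anchor everything on the operational identity $A(\bv^{\prime}\bx)=\bv^{\prime}\mathfrak{A}\bx$ together with uniqueness of $\mathfrak{A}$ coming from the linear independence of $v_1,\ldots,v_d$, rather than on the displayed formula $\mathfrak{A}^{\prime}=(\langle A(v_i),v_j\rangle)$ appearing just before the lemma. That is the right choice: the paper allows a non-orthogonal basis, and in that case the expansion $A(v_k)=\sum_{k'}\langle A(v_k),v_{k'}\rangle v_{k'}$ in the paper's chain of equalities (and hence the explicit formula for $\mathfrak{A}^{\prime}$) only holds for orthonormal $v_k$; it is the operational relation $A(x)=\bv^{\prime}\mathfrak{A}\bx$ that is consistent with part~(iii), as your rank-one computation $(v_i\otimes v_j)(\bv^{\prime}\bx)=\bv^{\prime}(e_ie_j^{\prime}V^{\prime})\bx$ shows, and your use of linearity of the inner product in its first argument keeps the conjugation bookkeeping correct.
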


To obtain the corresponding matrix of the spectal density operators $\SPDO_\theta^X$, first observe that, if $X_k=\sum_{i=1}^dX_{ki}v_i=:\bv^{\prime}\bX _k$, then\vspace{-1mm}
$$
C_h^X=EX_h\otimes X_{0}=\sum_{i=1}^d \sum_{j=1}^dEX_{hi}X_{0j}v_i\otimes v_j.
\vspace{-1mm}$$
 It follows from Lemma~\ref{le:corresp} (iii) that $\mathfrak{C}_h^X=C_h^\bX V^{\prime}$  is the corresponding matrix of $C_h^\bX:=E\bX _h\bX _0'$;  the linearity property (i) then implies that  \vspace{-1mm}  
\begin{equation}\label{eq:sd_matrix}
\mathfrak{F}_\theta^X=\frac{1}{2\pi}\Big(\sum_{h\in\mathbb{Z}}C_h^\bX e^{-\ii h\theta}\Big)V^{\prime}
\vspace{-1mm}\end{equation}
is the corresponding matrix of $\SPDO_\theta^X$. Assume that $\lambda_m(\theta)$ is the $m$-th largest eigenvalue of $\mathfrak{F}_\theta^X$, with eigenvector $\boldsymbol{\varphi}_m(\theta)$. Then $\lambda_m(\theta)$ is also an eigenvalue of $\SPDO^X_\theta$ and $\bv^{\prime}\boldsymbol{\varphi}_m(\theta)$ is the corresponding eigenfunction, from which we can compute, via its Fourier expansion, the dynamic FPCs. In particular, we have
$$
\phi_{mk}=\frac{\bv^{\prime}}{2\pi}\int_{-\pi}^\pi\boldsymbol{\varphi}_m(s)e^{-\ii k s}ds=:\bv^{\prime}\bphi_{mk},
$$
and hence
\begin{equation}\label{eq:finitedfpc}
Y_{mt}=\sum_{k\in\mathbb{Z}}\int_0^1 \bX_{t-k}'\bv(u)\bv^{\prime}(u) \bphi_{mk}du=\sum_{k\in\mathbb{Z}}\bX_{t-k}' V\bphi_{mk}.
\end{equation}

%
%

In view of \eqref{eq:sd_matrix}, our task is now to replace the spectral density matrix\vspace{-1mm}
$$
\SPDO_\theta^\bX=\frac{1}{2\pi}\sum_{h\in\mathbb{Z}}C_h^\bX e^{-\ii h\theta}
\vspace{-1mm}$$
of the coefficient sequence $(\bX_k)$ by some estimate. For this purpose, we can use existing multivariate techniques. Classically, we would put, for $|h|<n$, \vspace{-1mm}
$$
\hat C_h^\bX:=\frac{1}{n}\sum_{k=h+1}^{n}\bX _{k}\bX _{k-h}',\quad h\geq 0,
\quad\text{and}\quad
\hat C_h^\bX:=\hat C_{-h}^\bX,\quad h< 0
\vspace{-1mm}$$
(recall that we throughout assume that the data are centered) and use, for example, some lag window estimator \vspace{-1mm}
\begin{align}\label{e:bartlett_estimator}
\hat\SPDO_\theta^\bX:=\frac{1}{2\pi}\sum_{|h|\leq q}w(h/q)\hat C_h^\bX e^{-\ii h\theta},
\vspace{-1mm}\end{align}
where $w$ is some appropriate weight function, $q=q_n\to\infty$ and $q_n/n\to 0$. For more details concerning common choices of $w$ and the tuning parameter $q_n$, we refer to Chapters~10--11 in Brockwell and Davis~\cite{brockwell:davis:1990} and to Politis~\cite{politis:2011}. We then set $\hat{\mathfrak{F}}_\theta^X:=\hat\SPDO_\theta^\bX V^{\prime}$ and compute the eigenvalues and eigenfunctions~$\hat\lambda_m(\theta)$ and $\hat{\boldsymbol{\varphi}}_m(\theta)$ thereof, which serve as estimators of $\lambda_m(\theta)$ and $\boldsymbol{\varphi}_m(\theta)$, respectively. We estimate the filter coefficients by $\hat\phi_{mk}=\frac{\bv^{\prime}}{2\pi}\int_{-\pi}^\pi\hat{\boldsymbol{\varphi}}_m(s)e^{\ii ks}ds$. Usually, no analytic form of~$\hat{\boldsymbol{\varphi}}_m(s)$ is available, and one has to perform numerical integration. We take the simplest approach, which is to set\vspace{-1mm}
$$
\hat\phi_{mk}=\frac{\bv^{\prime}}{2\pi(2N_\theta+1)}\sum_{j=-N_\theta}^{N_\theta}\hat{\boldsymbol{\varphi}}_m(\pi j/N_\theta)e^{\ii ks}=:\bv^{\prime}\hat{\bphi}_{mk},\quad (N_\theta\gg 1).
\vspace{-1mm}$$
The larger $N_\theta$ the better. This clearly depends on the available computing power.

Now, we substitute~$\hat{\bphi}_{mk}$ into~\eqref{eq:finitedfpc}, replacing the infinite sum with a rolling window\vspace{-1mm}
\begin{align}\label{e:est_scores_equation}
\hat Y_{mt}=\sum_{k=-L}^L \bX_{t-k}' V\hat{\bphi}_{mk}. 
\vspace{-1mm}\end{align}
This expression only can be computed for $t\in\{L+1,\dots,n-L\}$; for  
 $1\leq t\leq L$ or~$n-L+1\leq t\leq n$,   set $X_{-L+1}=\cdots=X_0=X_{n+1}=\cdots=X_{n+L}=EX_1=0$. This, of course, creates a certain bias  on the boundary of the observation period. As for the choice of $L$, we observe that $\sum_{\ell\in\mathbb{Z}}\|\hat\phi_{m\ell}\|^2=1$. It is then natural to choose~$L$ such that $\sum_{-L\leq \ell\leq L}\|\hat\phi_{m\ell}\|^2\geq 1-\epsilon$, for some small threshold $\epsilon$, e.g., $\epsilon=0.01$.
 
Based on this definition of  $\hat\phi_{mk}$, 
   we obtain an empirical $p$-term dynamic Karhunen-Lo\`eve expansion
\begin{align}\label{e:est_KL_equation}
\hat X_{t}=\sum_{m=1}^p\sum_{k=-L}^L \hat Y_{m,t+k}\hat\phi_{mk},\   \text{with}\  \  \hat Y_{mt}=0, \ t\in\{-L+1,\ldots,0\}\cup\{n+1,\ldots,n+L\}.
\end{align}

Parallel to \eqref{e:propvar}, the proportion of variance explained by the first $p$ dynamic FPCs can be estimated through \vspace{-2mm}
$$
\mathrm{PV}_{\mathrm{dyn}}(p):=\frac{\pi}{N_\theta}\sum_{m\leq p}\sum_{j=-N_\theta}^{N_\theta}\hat\lambda_m(\pi j/N_\theta)\Big\slash \frac{1}{n}\sum_{k=1}^n\|X_k\|^2.
 \vspace{-2mm}$$
We will use $(1-\mathrm{PV}_{\mathrm{dyn}}(p))$ as a measure of the loss of information incurred when considering a dimension reduction to dimension $p$. Alternatively, one also can use the normalized mean squared error \vspace{-2mm}
\begin{equation}\label{e:est_propvar2}
\mathrm{NMSE}(p):=\sum_{k=1}^{n}\|X_k-\hat X_k\|^2\Big\slash \sum_{k=1}^{n}\|X_k\|^2.
 \vspace{-2mm}\end{equation}
Both quantities converge to the same limit.

\section{A real-life illustration}\label{se:reald}

In this section, we draw a comparison between dynamic and static FPCA on basis of a real data set.
The observations are half-hourly measurements of the concentration (measured in $\mu gm^{-3}$) of particulate matter with an aerodynamic diameter of less than $10\mu m$, abbreviated as {\tt PM10}, in ambient air taken in Graz, Austria from October 1, 2010 through March 31, 2011. Following Stadlober et al.~\cite{stadlober:hoermann:pfeiler:2008} and Aue et al.~\cite{aue:dubart:hoermann:2012}, a square-root transformation was performed  in order to stabilize the variance and avoid heavy-tailed observations. Also, we removed some outliers and a seasonal (weekly) pattern induced from different traffic intensities on business days and weekends. Then we use the software {\tt R} to transform the raw data, which is discrete,  to functional data, as explained in Section~\ref{se:pract}, using 15 Fourier basis functions. The resulting curves for 175 daily  observations, $X_1,\ldots,X_{175}$, say,  roughly representing one winter season, for which pollution levels are known to be high, are displayed in Figure~\ref{fig:pm_curves}. 

\begin{figure}[ht]
\centering
\includegraphics[width=8cm]
{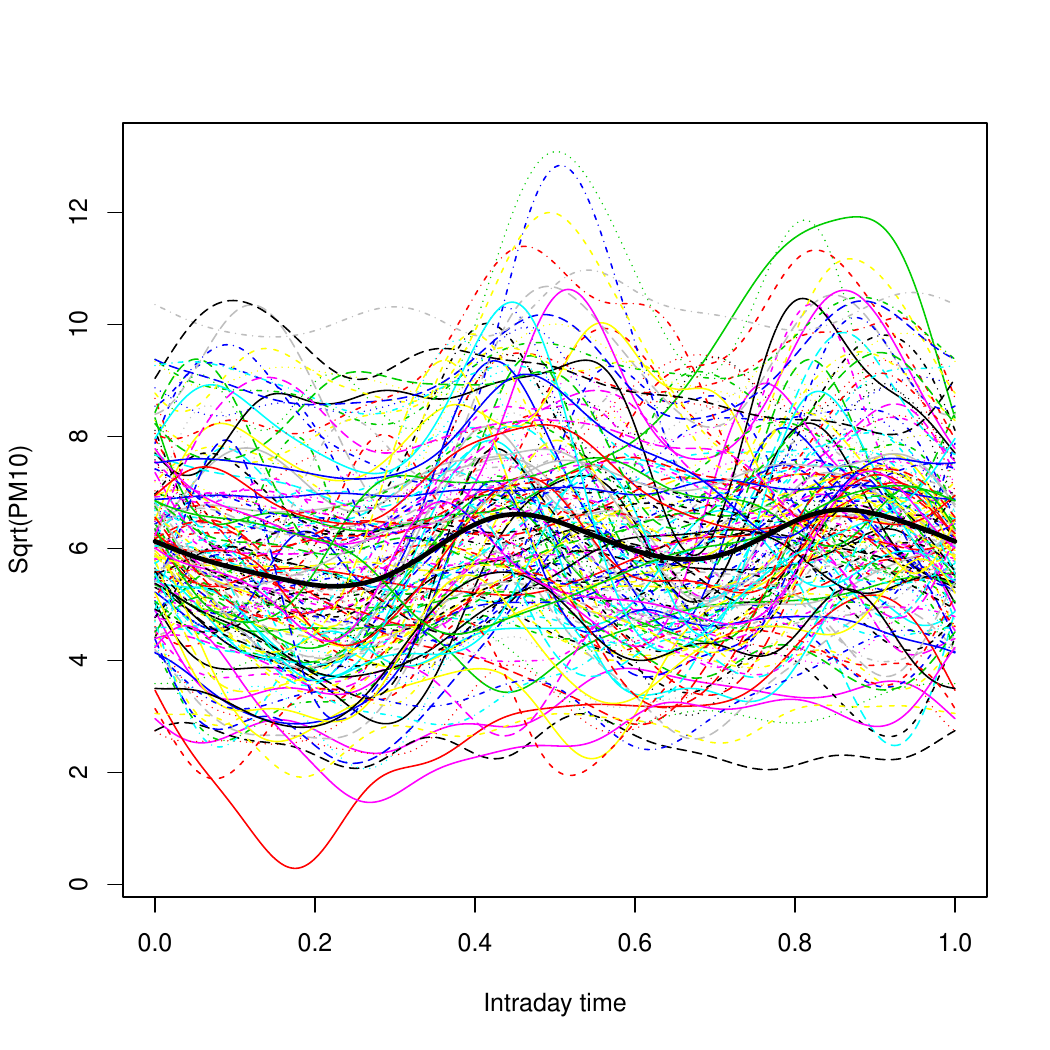}
\caption{A plot of  175 daily curves $x_t(u)$, $1\leq t\leq 175$, where $x_t(u)$ are the square-root transformed and detrended   functional observations of {\tt PM10}, based on  15 Fourier basis functions. The solid black line represents the sample mean curve~$\hat\mu(u)$.}
\label{fig:pm_curves}
\end{figure}

From those data, we computed the (estimated) first dynamic FPC score sequence $(\hat Y_{1t}^{\mathrm{dyn}}\colon~1\leq~t\leq~175)$.  To this end, we centered the data at their empirical mean $\hat\mu(u)$, then implemented the procedure described in Section~\ref{se:pract}. We used the traditional Bartlett kernel $w(x)=1-|x|$ in \eqref{e:bartlett_estimator} to obtain an estimator for the spectral density operator, with   bandwidth   $q=\lfloor  {n}^{1/2}\rfloor=13$. More sophisticated estimation methods, as those proposed,  for example,  by Politis~\cite{politis:2011}, of course can be considered; but they also depend on additional tuning parameters, still leaving much of the selection  to the practitioner's choice. From $\hat{\mathcal F}_\theta^\bX$ we obtain the estimated filter elements $\hat\phi_{1\ell}$. It turns out that they fade away quite rapidly. In particular $\sum_{\ell=-10}^{10}\|\hat\phi_{1\ell}\|^2\approx 0.998$. Hence, for calculation of the scores in \eqref{e:est_scores_equation} it is justified to choose $L=10$. The five central filter elements $\hat\phi_{1\ell}(u)$, $\ell=-2,\ldots,2$, are plotted in Figure~\ref{fig:filters}.

\begin{figure}[ht]
\centering
\includegraphics[width=15cm,height=5cm]
{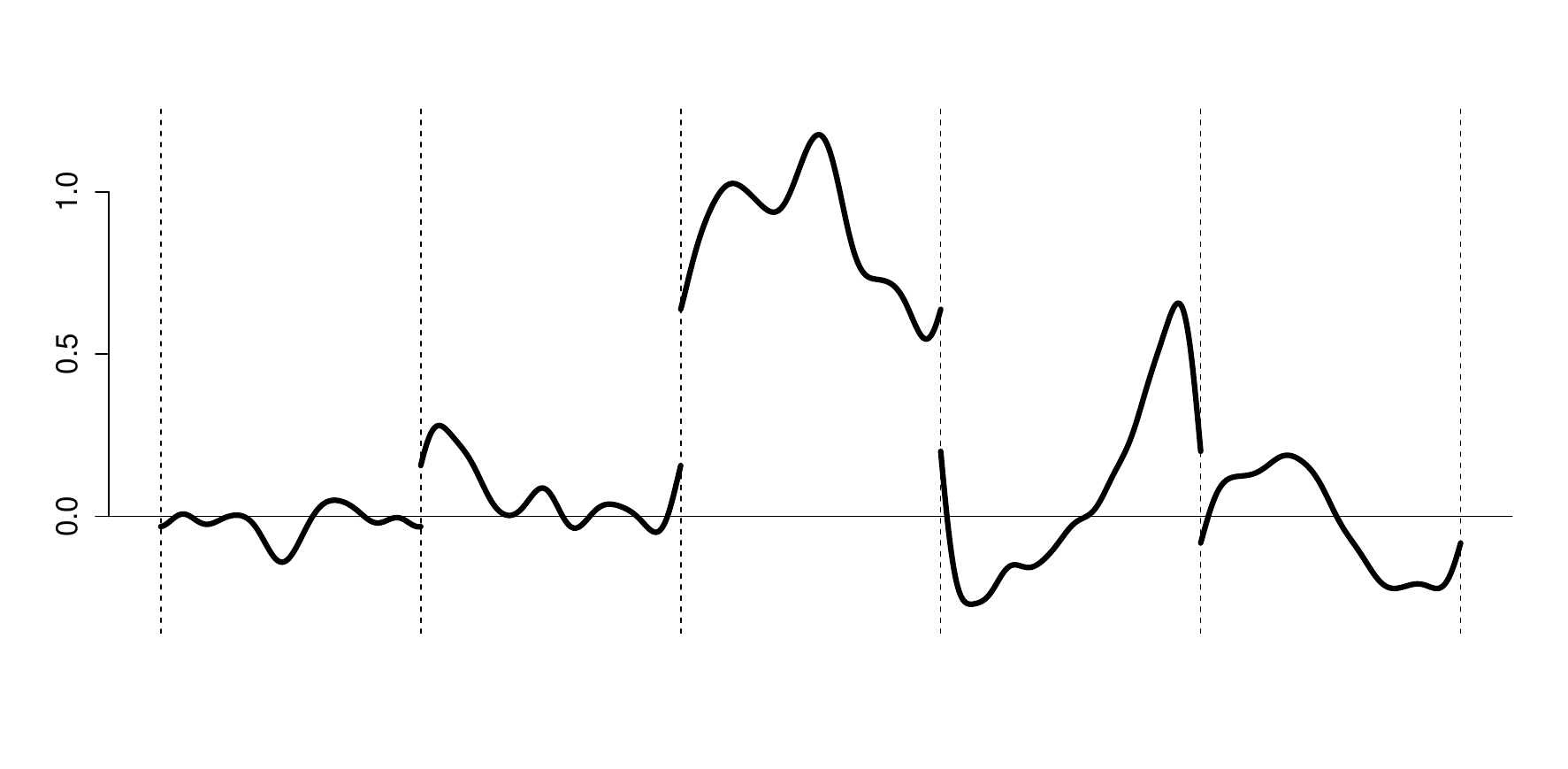}
\caption{The five central filter elements $\hat\phi_{1,-2}(u),\ldots,\hat\phi_{1,2}(u)$ (from left to right).}
\label{fig:filters}
\end{figure}

 Further components could be computed similarly, but for 
the purpose of demonstration we  focus   on one component only. In fact, the first dynamic FPC already explains about $80\%$ of the total variance, compared to the $73\%$ explained by the first static FPC.  The latter was also computed, resulting in the static FPC score sequence $(\hat Y_{1t}^{\mathrm{stat}}\colon~1\leq~t\leq~175)$.
Both sequences are shown in Figure~\ref{fig:pm_1fpc}, along with their differences.  

\begin{figure}[htbp]
\centering
\includegraphics[width=5cm]{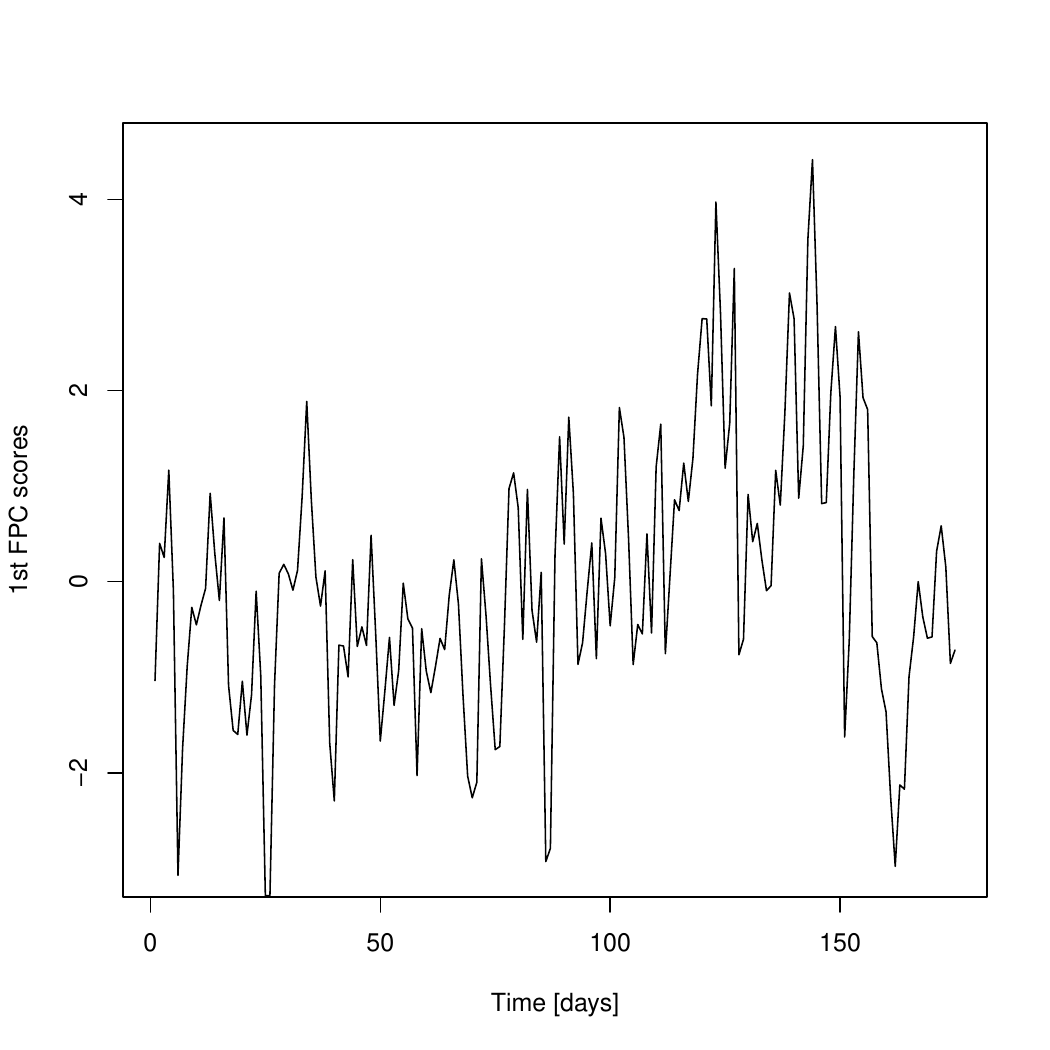}\hfill
\includegraphics[width=5cm]{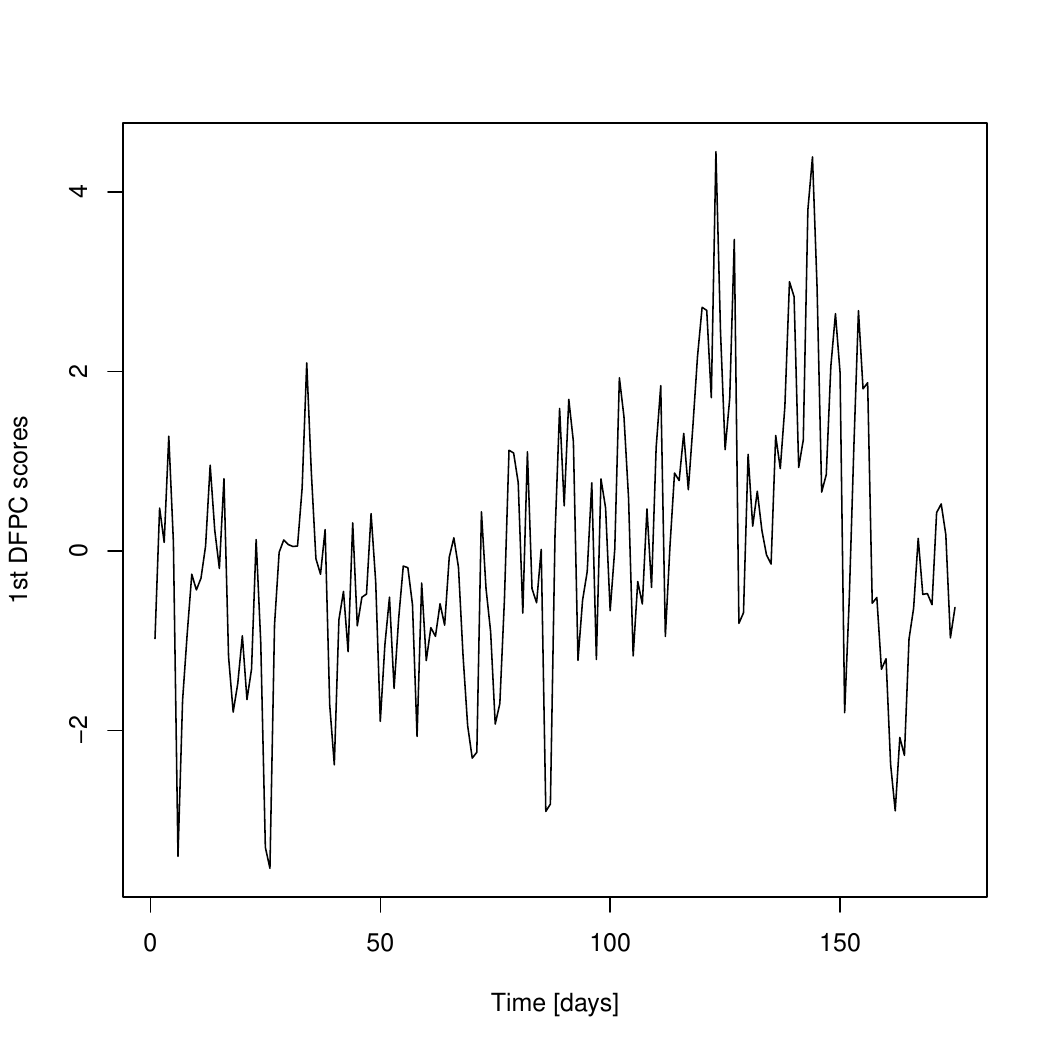}\hfill
\includegraphics[width=5cm]{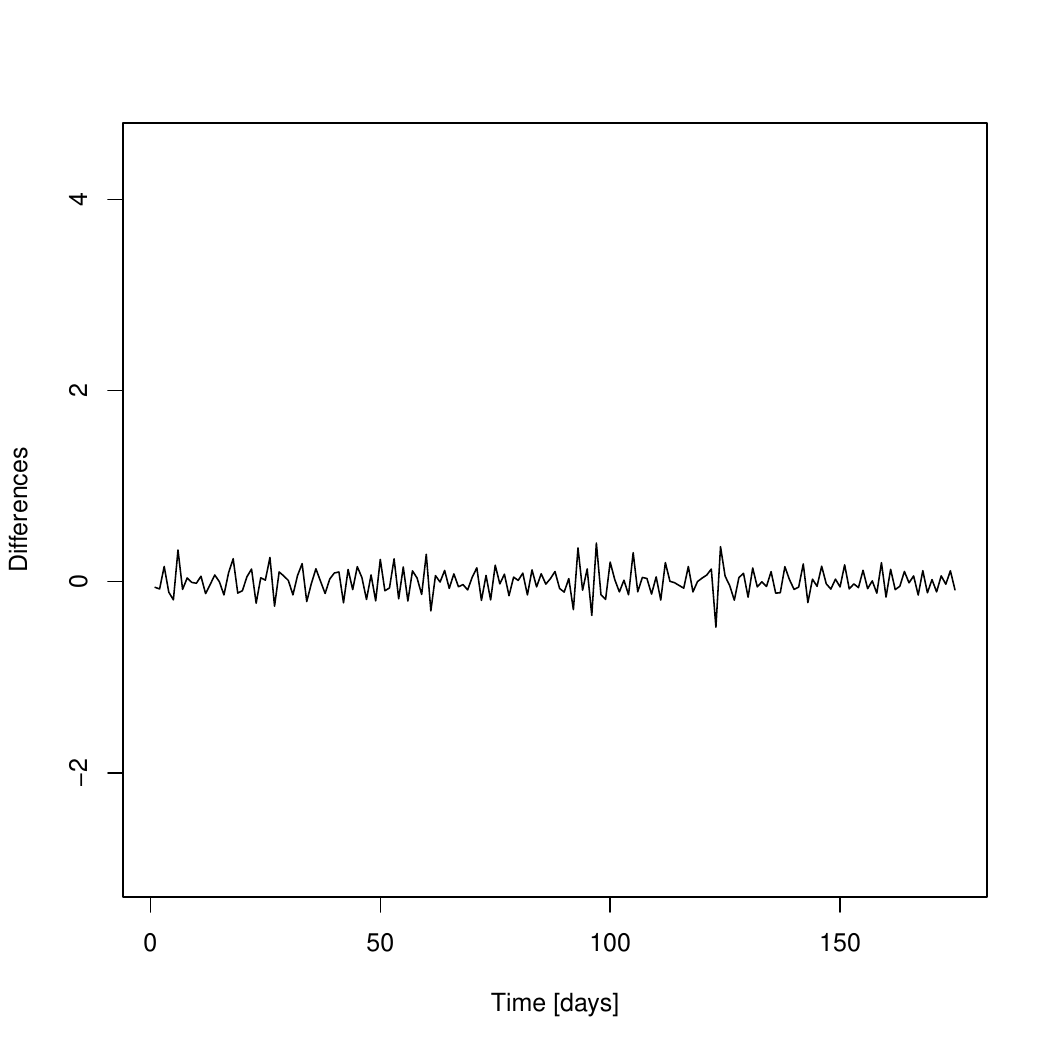}\vspace{-2mm}
\caption{  First static 
 (left panel) and first  dynamic  (middle panel) FPC score  sequences, 
and their differences 
 (right panel).\vspace{-2mm}}
\label{fig:pm_1fpc}
\end{figure}

Although based on entirely different ideas,  the static and dynamic scores in Figure~\ref{fig:pm_1fpc} (which, of course, are not loading the same functions) 
appear to be remarkably close to one another. The reason why the dynamic Karhunen-Lo\`eve expansion  accounts for a significantly larger amount of the total variation is that, contrary to its static counterpart,  it does not just involve the present observation. 
%
%

To get more statistical insight into those results, let us consider the first static sample FPC, $\hat v_1(u)$, say, displayed in Figure~\ref{fig:pm_pca}.
\begin{figure}[ht]
\centering
\includegraphics[width=13cm]{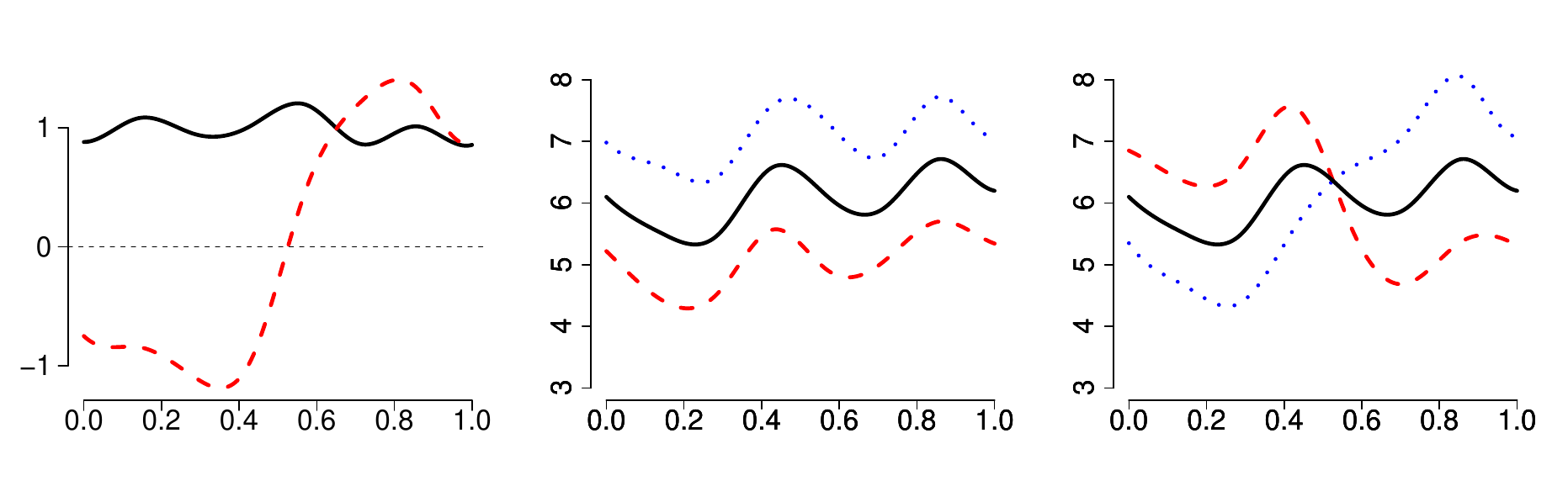}\vspace{-2mm}
\caption{First static FPC $\hat v_1(u)$ (solid line), and second static FPC $\hat v_2(u)$ (dashed line) [left panel]. $\hat\mu(u)\pm \hat v_1(u)$  [middle panel] and $\hat\mu(u)\pm \hat v_2(u)$  [right panel] describe the effect of the first and second static FPC on the mean curve.}
\label{fig:pm_pca}
\end{figure}
 We see that $\hat v_1(u)\approx 1$ for all $u\in[0,1]$, so that the static FPC score $\hat Y_{1t}^\mathrm{stat}=\int_0^1(X_t(u)-\hat\mu(u))\hat v_1(u)du$ \ roughly coincides with the average deviation of $X_t(u)$ from the sample mean $\hat\mu(u)$: the effect of a large (small) first score corresponds to a large (small) daily average of $\sqrt{{\tt PM10}}$. In view of the similarity between  $\hat Y_{1t}^\mathrm{dyn}$ and~$\hat Y_{1t}^\mathrm{stat}$, it is possible to attribute the same interpretation to the dynamic FPC scores. However, regarding the dynamic Karhunen-Lo\`eve expansion, dynamic FPC scores should be interpreted sequentially. To this end, let us take advantage of the fact that  $\sum_{\ell=-1}^{1}\|\hat\phi_{1\ell}\|^2\approx 0.92$. In the approximation by a single-term dynamic Karhunen-Lo\`eve expansion, we  thus roughly have
$$
X_t(u)\approx \hat\mu(u)+\sum_{\ell=-1}^1\hat Y_{1,t+\ell}^\mathrm{dyn}\hat \phi_{1\ell}(u).
$$ 
This suggests   studying the impact of triples $(\hat Y_{1,t-1}^\mathrm{dyn},\hat Y_{1t}^\mathrm{dyn},\hat Y_{1,t+1}^\mathrm{dyn})$ of consecutive scores on the pollution level of day $t$. We do this  by adding the functions\vspace{-1mm}
$$
\mathrm{eff}(\delta_{-1},\delta_0,\delta_1):=\sum_{\ell=-1}^1\delta_\ell\hat\phi_{1\ell}(u),\quad\text{with}\quad  \delta_i=\mathrm{const}\times\pm 1,
\vspace{-1mm}$$ 
to the overall mean curve $\hat\mu(u)$. In Figure~\ref{fig:dynFPC}, we do this with $\delta_i=\pm 1$. For instance, the upper left panel shows $\hat\mu(u)+\mathrm{eff}(-1,-1,-1)$, corresponding to the impact of three consecutive small dynamic FPC scores. The result is a negative shift of the mean curve.  If two small scores are followed by a large one (second panel from the left in top row), then the {\tt PM10} level increases as $u$ approaches 1. Since a large value of $\hat Y_{1,t+1}^\mathrm{dyn}$ implies a large average concentration of $\sqrt{\tt{PM10}}$ on day $t+1$, and since the pollution curves are highly correlated at the transition from day $t$ to day $t+1$, this should indeed be reflected by a higher value of $\sqrt{{\tt PM10}}$ towards the end of day~$t$. Similar interpretations can be given for the other panels in Figure~\ref{fig:dynFPC}. 

It is interesting to observe that, in this example, the first dynamic FPC seems to take over the roles of the first two static FPCs. The second static FPC (see Figure~\ref{fig:pm_pca}) indeed can be interpreted as an intraday trend effect; if the second static score of day $t$ is large (small), then $X_t(u)$ is increasing (decreasing) over $u\in[0,1]$. 
Since we are working with sequentially dependent data, we can get information about such a trend from future and past observations, too. Hence, roughly speaking, we have
$$
\sum_{\ell=-1}^1\hat Y_{1,t+\ell}^\mathrm{dyn}\hat \phi_{1\ell}(u)\approx \sum_{m=1}^2\hat Y_{mt}^{\mathrm{stat}}\hat v_m(u).
$$
This is exemplified in 
Figure~\ref{fig:comp} of Section~\ref{se:intro}, which shows the ten consecutive curves $x_{71}(u)-\hat\mu(u),\ldots,x_{80}(u)-\hat\mu(u)$ (left panel) and compares them to the single-term static (middle panel) and the single-term dynamic Karhunen-Lo\`eve expansions (right panel).
\begin{figure}[ht]
\centering
\includegraphics[width=12.5cm]{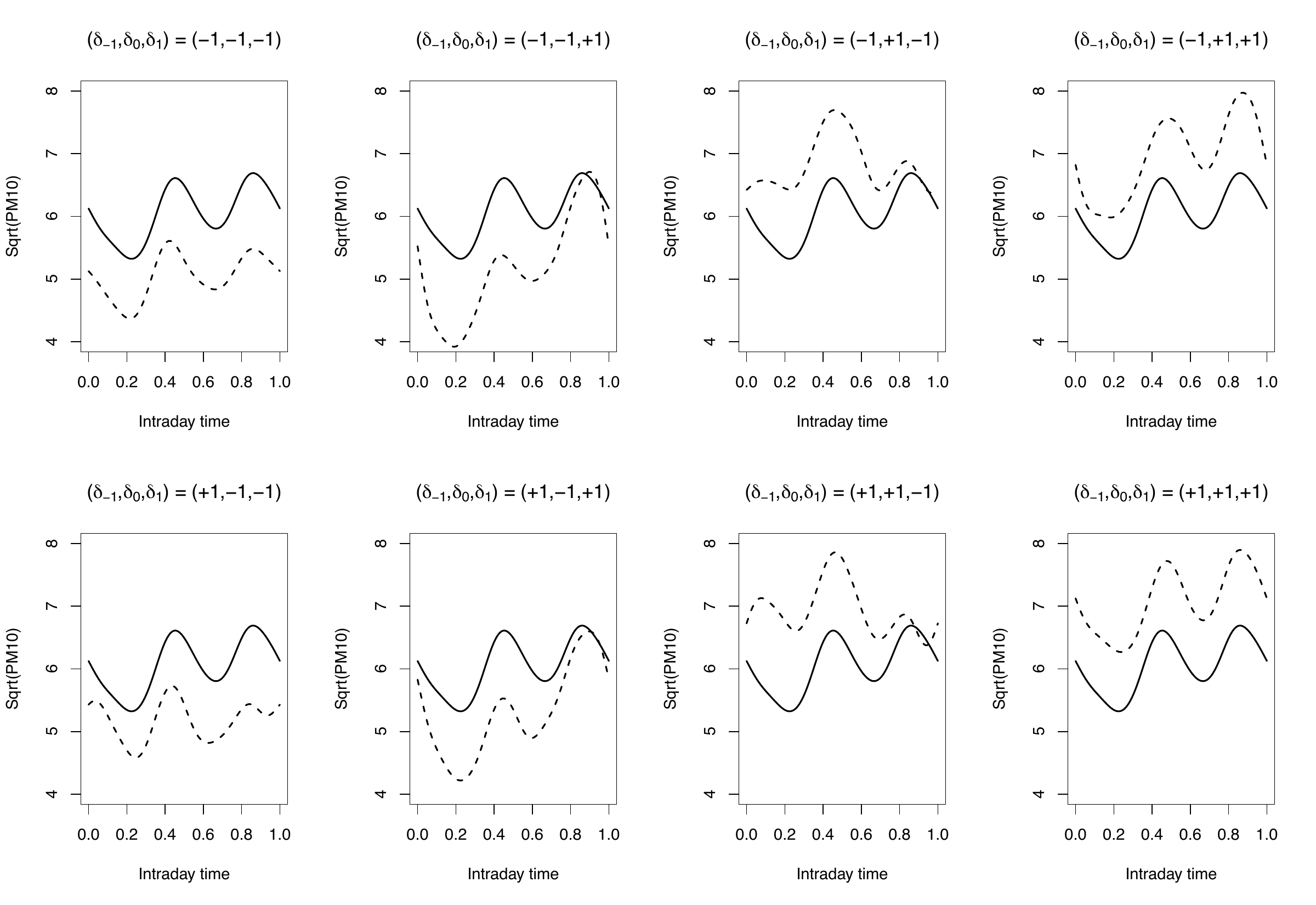}
\caption{Mean curves $\hat\mu(u)$ (solid line) and $\hat\mu(u)+\mathrm{eff}(\delta_{-1},\delta_0,\delta_1)$, with $\delta_i=\pm 1$ (dashed).}
\label{fig:dynFPC}
\end{figure}

\section{Simulation study}\label{se:simul}

In this simulation study, we compare the performance of dynamic FPCA with that of static FPCA for a variety of data-generating processes. For each simulated  functional time series $(X_t)$, where $X_t=X_t(u)$, $u\in[0,1]$, we compute the static and dynamic scores, and recover the approximating series $(\hat X_t^{\mathrm{stat}}(p))$ and $(\hat X_t^{\mathrm{dyn}}(p))$ that result from  the static and dynamic Karhunen-Lo\`eve expansions, respectively, of order $p$.  The performances of these approximations  are  measured in terms of the corresponding normalized mean squared errors  (NMSE)
$$
\sum_{t=1}^{n}\|X_t-\hat X_t^{\mathrm{stat}}(p)\|^2\Big\slash \sum_{t=1}^{n}\|X_t\|^2
\quad
\text{and}
\quad
\sum_{t=1}^{n}\|X_t-\hat X_t^{\mathrm{dyn}}(p)\|^2\Big\slash \sum_{t=1}^{n}\|X_t\|^2.
$$
The smaller these quantities, the better  the approximation. 

Computations were 
 implemented in  {\tt R}, along with the {\tt fda} package. The data were simulated according to a functional AR(1) model $X_{n+1}=\Psi(X_n)+\varepsilon_{n+1}$. In practice,  this simulation has to be performed  in finite dimension $d$, say. To this end, let $(v_i)$, $i\in\mathbb{N}$
  be the Fourier basis functions on $[0,1]$: for large $d$, due to the   linearity of~$\Psi$, 
\begin{align*}
&\langle X_{n+1},v_j\rangle =\langle \Psi(X_n),v_j\rangle +\langle \varepsilon_{n+1},v_j\rangle\\
&\quad= \langle \Psi\big(\sum_{i=1}^\infty\langle X_n,v_i\rangle v_i\big),v_j\rangle +\langle \varepsilon_{n+1},v_j\rangle\approx \sum_{i=1}^d\langle X_n,v_i\rangle \langle \Psi(v_i),v_j\rangle +\langle \varepsilon_{n+1},v_j\rangle.
\end{align*}
Hence, letting $\bX_n=(\langle X_n,v_1\rangle,\ldots,\langle X_n,v_d\rangle)^{\prime}$ and $\boldsymbol{\varepsilon}_n=(\langle \varepsilon_{n+1},v_1\rangle,\ldots,\langle \varepsilon_{n+1},v_d\rangle)^{\prime}$,   the first $d$ Fourier coefficients of $X_n$ approximately  satisfy the VAR(1) equa\-tion\linebreak $
\bX_{n+1}=\mathfrak{P}\bX_n+\boldsymbol{\varepsilon}_n,
$ 
where $\mathfrak{P}=(\langle \Psi(v_i),v_j\rangle\colon 1\leq i,j\leq d)$.
Based on this observation, we used a VAR(1) model for generating the first $d$ Fourier coefficients of the process~$(X_n)$. To obtain $\mathfrak{P}$, we generate a matrix $G=(G_{ij}\colon 1 \leq i,j \leq d)$, where  the $G_{ij}$'s are mutually independent $N(0,\psi_{ij})$, and then set $\mathfrak{P}:=\kappa{G}/{\|G\|}$. Different choices of $\psi_{ij}$ are considered.
Since $\Psi$ is bounded, we have $\mathfrak{P}_{ij}\to 0$ as  $i,j\to\infty$.  For the operators~$\Psi_1$, $\Psi_2$ and $\Psi_3$, we used $\psi_{ij}=(i^2+j^2)^{-1/2}$,   $\psi_{ij}=(i^{2/2}+j^{3/2})^{-1}$, and~$\psi_{ij}=e^{-(i+j)}$, respectively. 
For $d$ and  $\kappa$, we considered the values $d=15,31,51,101$ and  $\kappa=0.1, 0.3, 0.6, 0.9$. The noise $(\varepsilon_t)$ is chosen as independent Gaussian and obtained as a linear combination of the functions $(v_i\colon 1 \leq i \leq d)$ with independent zero-mean normal coefficients $(C_i\colon 1 \leq i \leq d)$, such that $\mathrm{Var}(C_i)=\exp((i-1)/10)$.
With this approach, we generate $n=400$ observations. We then follow the methodology described in Section~\ref{se:pract} and use the Barlett kernel in \eqref{e:bartlett_estimator} for estimation of the spectral density operator. The tuning parameter $q$ is set equal to $\sqrt{n}=20$.
A more sophisticated calibration probably  can lead to even better results, but we also observed that moderate variations of~$q$ do not fundamentally change our findings. The numerical integration for obtaining~$\hat\phi_{mk}$ is performed on the basis of $1000$ equidistant integration points. In \eqref{e:est_scores_equation} we chose~$L = \min(L^{\prime},60)$, where $L^{\prime}=\mathrm{argmin}_{j\geq 0}\sum_{-j\leq \ell\leq j}\|\hat\phi_{m\ell}\|^2\geq 0.99$. The limitation $L\leq 60$ is imposed to keep computation times moderate. Usually, convergence is relatively fast.

For each choice of $d$ and $\kappa$, the experiment as described above is repeated $200$ times. The mean and standard deviation of NMSE in different settings and with values $p=1,2,3,6$ are reported in Table~\ref{tab:simulation}.
 Results do not vary much among setups with $d \geq 31$, and thus in Table~\ref{tab:simulation} we only present the cases $d=15$ and $d=101$.

We see that, in basically all settings, dynamic FPCA significantly outperforms static FPCA in terms of NMSE.  As one can expect, the difference becomes more striking with increasing  dependence coefficient $\kappa$. It is also interesting to observe that the variations of NMSE among the 200 replications is systematically smaller for the dynamic procedure.

Finally, it should be noted that, in contrast to the static PCA, the empirical version of our procedure is not ``exact'', but is subject to small approximation errors. These approximation errors can stem from numerical integration (which is required in the calculation of $\hat\phi_{mk}$) and are also due to the truncation of the filters at some finite lag $L$ (see Section~\ref{se:pract}). Such little deviations do not matter in practice if a component explains a significant proportion of variance. If, however, the additional contribution of the higher-order component is very small, then it can happen that it doesn't compensate a possible approximation error.  This becomes visible in the setting $\Psi_3$ with 3 or 6 components, where for some constellations the NMSE for dynamic components is slightly larger than for the static ones.

\begin{sidewaystable}
\begin{center}
{ \footnotesize \begin{tabular}{lll|ll|ll|ll|ll}
& & & \multicolumn{2}{c|}{$1$ component} & \multicolumn{2}{c|}{$2$ components} & \multicolumn{2}{c|}{$3$ components} & \multicolumn{2}{c}{$6$ components}\\
& $d$ &$\kappa$ & static & dynamic & static & dynamic & static & dynamic & static & dynamic\\

\hline
\multirow{8}{*}{$\Psi_1$}
& \multirow{4}{*}{15}
& 0.1 &  {\bf 0.697} (0.16) &  {\bf 0.637} (0.13) &  {\bf 0.546} (0.15) &  {\bf 0.447} (0.10) &  {\bf 0.443} (0.12) &  {\bf 0.325} (0.08) &  {\bf 0.256} (0.08) &  {\bf 0.138} (0.05)\\
& & 0.3 &  {\bf 0.696} (0.16) &  {\bf 0.621} (0.14) &  {\bf 0.542} (0.15) &  {\bf 0.434} (0.11) &  {\bf 0.440} (0.13) &  {\bf 0.314} (0.08) &  {\bf 0.253} (0.08) &  {\bf 0.132} (0.05)\\ 
& & 0.6 &  {\bf 0.687} (0.32) &  {\bf 0.571} (0.23) &  {\bf 0.526} (0.25) &  {\bf 0.392} (0.15) &  {\bf 0.423} (0.20) &  {\bf 0.283} (0.11) &  {\bf 0.240} (0.11) &  {\bf 0.119} (0.06)\\ 
& & 0.9 &  {\bf 0.648} (0.76) &  {\bf 0.479} (0.47) &  {\bf 0.481} (0.56) &  {\bf 0.322} (0.29) &  {\bf 0.377} (0.43) &  {\bf 0.229} (0.20) &  {\bf 0.209} (0.22) &  {\bf 0.096} (0.09)\\
& \multirow{4}{*}{101}
& 0.1 &  {\bf 0.805} (0.12) &  {\bf 0.740} (0.08) &  {\bf 0.708} (0.11) &  {\bf 0.587} (0.08) &  {\bf 0.642} (0.12) &  {\bf 0.478} (0.07) &  {\bf 0.519} (0.08) &  {\bf 0.274} (0.05)\\ 
& & 0.3 &  {\bf 0.802} (0.13) &  {\bf 0.729} (0.11) &  {\bf 0.704} (0.12) &  {\bf 0.577} (0.09) &  {\bf 0.637} (0.11) &  {\bf 0.469} (0.08) &  {\bf 0.515} (0.10) &  {\bf 0.269} (0.05)\\ 
& & 0.6 &  {\bf 0.792} (0.22) &  {\bf 0.690} (0.18) &  {\bf 0.689} (0.19) &  {\bf 0.545} (0.12) &  {\bf 0.619} (0.16) &  {\bf 0.441} (0.10) &  {\bf 0.495} (0.13) &  {\bf 0.252} (0.07)\\ 
& & 0.9 &  {\bf 0.755} (0.66) &  {\bf 0.616} (0.45) &  {\bf 0.640} (0.50) &  {\bf 0.479} (0.31) &  {\bf 0.568} (0.40) &  {\bf 0.387} (0.23) &  {\bf 0.446} (0.34) &  {\bf 0.220} (0.15)\\ 
\hline
\multirow{8}{*}{$\Psi_2$}
& \multirow{4}{*}{15}
& 0.1 &  {\bf 0.524} (0.20) &  {\bf 0.491} (0.17) &  {\bf 0.355} (0.14) &  {\bf 0.306} (0.11) &  {\bf 0.263} (0.10) &  {\bf 0.208} (0.08) &  {\bf 0.129} (0.05) &  {\bf 0.082} (0.03)\\ 
& & 0.3 &  {\bf 0.522} (0.21) &  {\bf 0.473} (0.18) &  {\bf 0.351} (0.16) &  {\bf 0.294} (0.12) &  {\bf 0.259} (0.12) &  {\bf 0.200} (0.08) &  {\bf 0.126} (0.06) &  {\bf 0.078} (0.04)\\ 
& & 0.6 &  {\bf 0.507} (0.49) &  {\bf 0.413} (0.29) &  {\bf 0.331} (0.29) &  {\bf 0.255} (0.15) &  {\bf 0.240} (0.19) &  {\bf 0.174} (0.10) &  {\bf 0.114} (0.08) &  {\bf 0.068} (0.05)\\ 
& & 0.9 &  {\bf 0.458} (1.15) &  {\bf 0.310} (0.59) &  {\bf 0.272} (0.64) &  {\bf 0.187} (0.32) &  {\bf 0.193} (0.41) &  {\bf 0.130} (0.21) &  {\bf 0.088} (0.17) &  {\bf 0.052} (0.09)\\ 
& \multirow{4}{*}{101}
& 0.1 &  {\bf 0.585} (0.19) &  {\bf 0.549} (0.17) &  {\bf 0.436} (0.15) &  {\bf 0.378} (0.11) &  {\bf 0.356} (0.13) &  {\bf 0.282} (0.10) &  {\bf 0.240} (0.08) &  {\bf 0.146} (0.05)\\ 
& & 0.3 &  {\bf 0.581} (0.21) &  {\bf 0.530} (0.18) &  {\bf 0.436} (0.12) &  {\bf 0.369} (0.11) &  {\bf 0.350} (0.13) &  {\bf 0.274} (0.09) &  {\bf 0.234} (0.10) &  {\bf 0.141} (0.06)\\ 
& & 0.6 &  {\bf 0.564} (0.46) &  {\bf 0.469} (0.27) &  {\bf 0.405} (0.33) &  {\bf 0.321} (0.18) &  {\bf 0.323} (0.21) &  {\bf 0.242} (0.13) &  {\bf 0.212} (0.12) &  {\bf 0.125} (0.07)\\ 
& & 0.9 &  {\bf 0.495} (1.06) &  {\bf 0.362} (0.59) &  {\bf 0.345} (0.68) &  {\bf 0.250} (0.39) &  {\bf 0.251} (0.58) &  {\bf 0.180} (0.34) &  {\bf 0.168} (0.26) &  {\bf 0.097} (0.14)\\ 
\hline
\multirow{8}{*}{$\Psi_3$}
& \multirow{4}{*}{15}
& 0.1 &  {\bf 0.367} (0.20) &  {\bf 0.344} (0.18) &  {\bf 0.134} (0.08) &  {\bf 0.127} (0.07) &  {\bf 0.049} (0.03) &  {\bf 0.054} (0.04) &  {\bf 0.002} (0.00) &  {\bf 0.017} (0.03)\\ 
& & 0.3 &  {\bf 0.362} (0.24) &  {\bf 0.322} (0.17) &  {\bf 0.129} (0.09) &  {\bf 0.119} (0.07) &  {\bf 0.048} (0.03) &  {\bf 0.050} (0.04) &  {\bf 0.002} (0.00) &  {\bf 0.015} (0.03)\\ 
& & 0.6 &  {\bf 0.334} (0.55) &  {\bf 0.253} (0.24) &  {\bf 0.113} (0.16) &  {\bf 0.097} (0.09) &  {\bf 0.041} (0.05) &  {\bf 0.040} (0.04) &  {\bf 0.002} (0.00) &  {\bf 0.011} (0.02)\\ 
& & 0.9 &  {\bf 0.236} (1.12) &  {\bf 0.146} (0.43) &  {\bf 0.074} (0.28) &  {\bf 0.061} (0.16) &  {\bf 0.025} (0.08) &  {\bf 0.027} (0.07) &  {\bf 0.001} (0.00) &  {\bf 0.008} (0.04)\\ 
& \multirow{4}{*}{101}
& 0.1 &  {\bf 0.366} (0.19) &  {\bf 0.344} (0.17) &  {\bf 0.134} (0.08) &  {\bf 0.127} (0.07) &  {\bf 0.049} (0.03) &  {\bf 0.054} (0.04) &  {\bf 0.002} (0.00) &  {\bf 0.017} (0.03)\\ 
& & 0.3 &  {\bf 0.363} (0.25) &  {\bf 0.322} (0.18) &  {\bf 0.131} (0.10) &  {\bf 0.120} (0.07) &  {\bf 0.047} (0.03) &  {\bf 0.050} (0.04) &  {\bf 0.002} (0.00) &  {\bf 0.015} (0.03)\\ 
& & 0.6 &  {\bf 0.325} (0.52) &  {\bf 0.251} (0.24) &  {\bf 0.113} (0.16) &  {\bf 0.098} (0.09) &  {\bf 0.040} (0.05) &  {\bf 0.040} (0.04) &  {\bf 0.002} (0.00) &  {\bf 0.011} (0.02)\\ 
& & 0.9 &  {\bf 0.235} (1.05) &  {\bf 0.149} (0.43) &  {\bf 0.074} (0.28) &  {\bf 0.061} (0.16) &  {\bf 0.025} (0.09) &  {\bf 0.026} (0.07) &  {\bf 0.001} (0.00) &  {\bf 0.008} (0.04)\\ 
\end{tabular}}
\end{center}
\label{tab:simulation}
\caption{
Results of the simulations of Section~\ref{se:simul}. Bold numbers represent the mean of NMSE for dynamic and static procedures resulting from 200 simulation runs. The numbers in brackets show standard deviations multiplied by a factor 10. The values $\kappa$ give the size of $\|\Psi_i\|_\mathcal{L}$, $i=1,2,3$.
We consider dimensions of the underlying models $d=15$ and $d=101$.}
\end{sidewaystable}

\section{Conclusion}\label{se:concl}

Functional principal component analysis is taking a leading role in the functional data literature. As an extremely effective tool for dimension reduction, it is useful for empirical data analysis as well as for many FDA-related methods, like functional linear models. A frequent situation in practice is that functional data are observed sequentially over time and exhibit serial dependence. This happens, for instance, when observations stem from a continuous-time process which is segmented into smaller units, e.g., days. In such cases, classical static FPCA still may be useful, but, in contrast to the i.i.d.\ setup, it does not lead to an optimal dimension-reduction technique. 

In this paper, we   propose a {\em dynamic} version of FPCA which takes advantage of the  potential serial dependencies in the functional observations. In the special case of uncorrelated data, the dynamic FPC methodology reduces to the usual static one. But, in the presence of serial dependence, static FPCA is (quite significantly, if serial dependence is strong) outperformed. 

This paper also provides (i) guidelines for practical implementation, (ii) a toy example with {\tt PM10} air  pollution data, and (iii) a simulation study. Our empirical application brings empirical evidence that dynamic FPCs have a clear edge over static FPCs in terms of their ability to represent dependent functional data in small dimension.  In the appendices, our results are cast into a rigorous mathematical framework, and we show that the proposed estimators of dynamic FPC scores are  consistent.  


\appendix


\section{General methodology and proofs}\label{app:pract}

In this subsection, we give a mathematically rigorous description of the methodology introduced in Section~\ref{se:notat}. We adopt a more general framework which can be specialized to the functional setup of Section~\ref{se:notat}. Throughout, $H$ denotes some (complex) separable Hilbert space equipped with norm $\|\cdot\|$ and inner product $\langle \cdot,\cdot\rangle$. 
We work in complex spaces, since our theory is based on a frequency domain analysis. Nevertheless, all our functional time series observations~$X_t$ are assumed to be real-valued functions.

\subsection{Fourier series in Hilbert spaces.}\label{se:A1}
For $p\geq 1$, consider the space $L_H^p([-\pi,\pi])$, that is,  the space of measurable mappings $x:[-\pi,\pi]\to H$ such that  $\int_{-\pi}^\pi \|x(\theta)\|^pd\theta<~\infty$.
Then, $\|x\|_p=(\frac{1}{2\pi}\int_{-\pi}^\pi\|x(\theta)\|^pd\theta)^{1/p}$ defines a norm. Equipped with this norm,  $L_H^p([-\pi,\pi])$ is a Banach space, and for
$p=2$, a Hilbert space with inner product
$$
(x,y):=\frac{1}{2\pi}\int_{-\pi}^\pi \langle x(\theta),y(\theta)\rangle d\theta.
$$
One can show (see e.g.\ \cite[Lemma 1.4]{bosq:2000}) that, for any $x\in L_H^1([-\pi,\pi])$, there exists a unique element $I(x)\in H$ which satisfies 
\begin{equation}\label{e:defint}
\int_{-\pi}^\pi \langle x(\theta),v\rangle d\theta=\langle I(x),v\rangle\quad \forall v\in H.
\end{equation}
We define $\int_{-\pi}^\pi x(\theta)d\theta:=I(x)$. 

For $x\in L_H^2([-\pi,\pi])$, define the {\it $k$-th Fourier coefficient} as 
\begin{equation}\label{eq:f-k}
f_k:=\frac{1}{2\pi}\int_{-\pi}^\pi x(\theta)e^{-\ii k\theta}d\theta,\quad k\in\mathbb{Z}.
\end{equation}
Below, we write $e_k$ for the function $\theta\mapsto e^{\ii k\theta}$, $\theta\in[-\pi,\pi]$. 
\begin{Proposition}\label{pr:Hfourier} Suppose $x\in L_H^2([-\pi,\pi])$ and 
define $f_k$ by equation~\eqref{eq:f-k}. Then, the sequence $S_n:=\sum_{k=-n}^n f_k e_k$ has a mean square limit in $L_H^2([-\pi,\pi])$.
If we denote the limit by $S$, then
$x(\theta)=S(\theta)$ for almost all $\theta$.
\end{Proposition}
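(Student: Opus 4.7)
The plan is to mimic the classical scalar Fourier theory, with the separable Hilbert space $H$ handled by expanding in an orthonormal basis. The argument splits naturally into two parts: first, showing that $(S_n)$ is Cauchy (hence convergent) in $L_H^2([-\pi,\pi])$, and second, identifying the limit as $x$.

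For the first part, I would exploit the orthogonality relation $(v e_k, w e_\ell)=\langle v,w\rangle\,\delta_{k\ell}$, which is immediate from the definition of the inner product on $L_H^2$. Using the identity \eqref{e:defint}, the Fourier coefficient $f_k$ satisfies $\langle f_k, v\rangle = \frac{1}{2\pi}\int_{-\pi}^\pi \langle x(\theta), v\rangle e^{-\ii k\theta} d\theta$ for every $v\in H$, so $\langle f_k,f_k\rangle = (x,f_k e_k)$. Expanding
\[
0\leq \|x-S_n\|_2^2 = \|x\|_2^2 - 2\mathrm{Re}(x,S_n) + \|S_n\|_2^2 = \|x\|_2^2 - \sum_{k=-n}^n \|f_k\|^2
\]
yields Bessel's inequality $\sum_k \|f_k\|^2 \leq \|x\|_2^2$. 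By orthogonality, $\|S_m-S_n\|_2^2 = \sum_{n<|k|\leq m}\|f_k\|^2$, so $(S_n)$ is Cauchy and converges to some $S\in L_H^2([-\pi,\pi])$.

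For the second part, the task is to upgrade Bessel to Parseval, i.e.\ to show $\|x\|_2^2=\sum_k\|f_k\|^2$. Here is where separability of $H$ is used: fix an orthonormal basis $(u_j)_{j\geq 1}$ of $H$, so that $\|x(\theta)\|^2 = \sum_j|\langle x(\theta),u_j\rangle|^2$ and $\|f_k\|^2 = \sum_j|\langle f_k,u_j\rangle|^2$. For each fixed $j$, the scalar function $\theta \mapsto \langle x(\theta),u_j\rangle$ lies in $L^2([-\pi,\pi],\mathbb{C})$, and by the identity above its $k$-th classical Fourier coefficient equals $\langle f_k,u_j\rangle$. Thus the scalar Plancherel theorem gives $\frac{1}{2\pi}\int_{-\pi}^\pi |\langle x(\theta),u_j\rangle|^2\,d\theta = \sum_k |\langle f_k,u_j\rangle|^2$. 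Summing over $j$ and interchanging $\sum_j$ with the integral (resp.\ with $\sum_k$) via Tonelli yields $\|x\|_2^2=\sum_k\|f_k\|^2$, which plugged back into the Bessel computation gives $\|x-S_n\|_2\to 0$. Uniqueness of limits in $L_H^2([-\pi,\pi])$ then forces $S=x$ as elements of this space, i.e.\ $x(\theta)=S(\theta)$ for almost all~$\theta$.

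The main obstacle, modest as it is, lies in the Parseval step: one must be careful in handling the Bochner-type integral defining $f_k$ (so that $\langle f_k,u_j\rangle$ really equals the scalar Fourier coefficient of $\langle x(\cdot),u_j\rangle$) and in justifying the two interchanges of a sum and an integral. Both are non-negative, so Tonelli's theorem suffices, provided one has verified the joint measurability of $(\theta,j)\mapsto|\langle x(\theta),u_j\rangle|^2$ — which is automatic since $x$ is (strongly) measurable and $H$ is separable. No other ingredient beyond \eqref{e:defint} and scalar Plancherel is needed.
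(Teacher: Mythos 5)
Your argument is correct, and it reaches the conclusion by a slightly different route than the paper's own proof. Both proofs rest on the same two ingredients: the identity $\langle f_k,v\rangle=\frac{1}{2\pi}\int_{-\pi}^\pi\langle x(\theta),v\rangle e^{-\ii k\theta}d\theta$ coming from \eqref{e:defint}, and classical scalar Fourier theory applied to the functions $\theta\mapsto\langle x(\theta),u_j\rangle$ along a countable orthonormal basis, with Tonelli justifying the interchanges. The difference is structural. The paper proves the full Parseval identity $\|x\|_2^2=\sum_k\|f_k\|^2$ already in the first step (to get summability, hence the Cauchy property), and then identifies the limit $S$ by a separate weak-type argument: scalar $L^2$-convergence of the Fourier series of $\langle x(\cdot),v\rangle$ gives $\int_{-\pi}^\pi\langle x(\theta)-S(\theta),v\rangle^2d\theta=0$ for every $v$, and a union of null sets over a countable basis upgrades this to $x(\theta)=S(\theta)$ a.e. You instead get convergence from the cheaper Bessel inequality, and postpone Parseval to the identification step, where the orthogonal-projection identity $\|x-S_n\|_2^2=\|x\|_2^2-\sum_{|k|\leq n}\|f_k\|^2$ turns Parseval directly into norm convergence $S_n\to x$ in $L_H^2([-\pi,\pi])$, so the a.e.\ statement follows from equality in $L_H^2$ with no separate null-set argument. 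Your version is somewhat more streamlined and stays entirely at the level of norms; the paper's version isolates the (equivalent) classical input -- completeness of the trigonometric system in scalar $L^2$ -- in the identification step and uses the ``test against a countable basis, then take a union of null sets'' device, which it reuses in spirit elsewhere. Either way, the key point you handled correctly is that $\langle f_k,u_j\rangle$ is genuinely the $k$-th scalar Fourier coefficient of $\langle x(\cdot),u_j\rangle$, which is exactly what \eqref{e:defint} provides.
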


Let us turn to the Fourier expansion of eigenfunctions $\varphi_m(\theta)$ used in the definition of the dynamic DPFCs. 
Eigenvectors are scaled to unit length: $\|\varphi_m(\theta)\|^2=1$. In order for $\varphi_m$ to belong to $L_H^2([-\pi,\pi])$, we additionally need measurability. Measurability cannot be taken for granted. This comes from the fact that $\|z\varphi_m(\theta)\|^2=1$ for all $z$ on the complex unit circle. In principle we could choose the ``signs'' $z=z(\theta)$  in an extremely erratic way, such that $\varphi_m(\theta)$ is no longer measurable. To exclude such pathological choices, we tacitly impose in the sequel that
versions of $\varphi_m(\theta)$ have been chosen in a ``smooth enough way'', to be  measurable.

Now we can expand the eigenfunctions $\varphi_m(\theta)$ in a Fourier series in the sense explained above:
$$
\varphi_m=\sum_{\ell\in\mathbb{Z}}\phi_{m\ell}e_\ell
\quad\text{with}\quad \phi_{m\ell}=\frac{1}{2\pi}\int_{-\pi}^\pi\varphi_m(s)e^{-\ii \ell s}ds.
$$
The coefficients $\phi_{m\ell}$ thus defined yield the definition \eqref{eq:dfpcs} of  dynamic FPCs. In the special case $H=L^2([0,1])$, $\phi_{m\ell}=\phi_{m\ell}(u)$ satisfies by \eqref{e:defint}
\begin{align*}\int_0^1\phi_{m\ell}(u)v(u)du&=\frac{1}{2\pi}\int_{-\pi}^\pi\int_0^1\varphi_m(u|s)v(u)du e^{-\ii \ell s}ds\\
&=\int_0^1\left(\frac{1}{2\pi}\int_{-\pi}^\pi\varphi_m(u|s) e^{-\ii \ell s}ds\right)v(u)du
\quad\forall v\in H.
\end{align*}
This implies that $\phi_{m\ell}(u)=\frac{1}{2\pi}\int_{-\pi}^\pi\varphi_m(u|s) e^{-\ii \ell s}ds$ for almost all $u\in[0,1]$, which is in line with the definition given in~\eqref{e:phiml}. Furthermore, \eqref{eq:convl2} follows directly from Proposition~\ref{pr:Hfourier}.

\subsection{The spectral density operator}\label{se:sdo}

Assume that the $H$-valued process $(X_t\colon t\in\mathbb{Z})$ is stationary with lag $h$ autocovariance operator $C_h^X$ and spectral density operator\vspace{-1mm}
\begin{equation}\label{conv}
\mathcal{F}^X_\theta:=\frac{1}{2\pi}\sum_{h\in\mathbb{Z}}C_h^X e^{-\ii h\theta}.
\vspace{-1mm}\end{equation}
Let $\mathcal{S}(H,H')$ be the set of Hilbert-Schmidt operators mapping from $H$ to $H'$ (both assumed to be separable Hilbert spaces). When $H=H'$ and when it is clear which space $H$ is meant, we sometimes simply write $\mathcal{S}$. With the Hilbert-Schmidt norm $\|\cdot\|_{\mathcal{S}(H,H')}$ this defines again a separable Hilbert space,  and so does $L_{\mathcal{S}(H,H')}^2([-\pi,\pi])$.
We will impose that the series in \eqref{conv} converges in $L_{\mathcal{S}(H,H)}^2([-\pi,\pi])$: we then  say that~$(X_t)$ possesses a spectral density operator.
\begin{Remark}\label{rem3} It follows that the results of the previous section can be applied. In particular we may deduce that
$C_k^X=\int_{-\pi}^\pi \mathcal{F}_\theta^X e^{\ii k\theta}d\theta$. 
\end{Remark}

A sufficient condition for convergence of (\ref{conv}) in $L^2_{\mathcal{S}(H,H)}([-\pi,\pi])$ is assumption~\eqref{e:abssymcov}. Then, it can be easily shown that the operator $\mathcal{F}^X_\theta$ is self-adjoint, non-negative definite and Hilbert-Schmidt. Below, we introduce a weak dependence assumption established in \cite{hormann:kokoszka:2010}, from which we can derive a sufficient condition for~\eqref{e:abssymcov}.

\begin{Definition}[$L^p$--$m$--approximability]\label{def:mapr}
A random $H$--valued sequence $(X_n\colon n\in\mathbb{Z})$ is called $L^p$--$m$--approximable if it can be represented as 
 $
X_n = f(\delta_n,\delta_{n-1},\delta_{n-2},...)
$,  
where the $\delta_i$'s are i.i.d.\ elements taking values in some measurable space $S$ and $f$ is a measurable function $f : S^\infty \rightarrow H$. Moreover, if $\delta'_1,\delta'_2,...$ are independent copies of~$\delta_1,\delta_2,...$ defined on the same measurable space $S$, then, for
\begin{align*}
X_n^{(m)}:= f(\delta_n,\delta_{n-1},\delta_{n-2},...,\delta_{n-m+1},\delta'_{n-m},\delta'_{n-m-1},...),
\end{align*}
we have
\begin{align}\label{lpm}
\sum_{m=1}^\infty (E\|X_m - X_m^{(m)}\|^p)^{1/p} < \infty.
\end{align}
\end{Definition}
H\"ormann and Kokoszka~\cite{hormann:kokoszka:2010} show that this notion is widely applicable to linear and non-linear functional time series. One of its main advantages is that it is a purely moment-based dependence measure that can be easily verified in many special cases.

\begin{Proposition}\label{pr:summab}
Assume that $(X_t)$ is $L^2$--$m$--approximable. Then \eqref{e:abssymcov} holds and  the operators $\mathcal{F}_\theta^X$, $\theta\in[-\pi,\pi]$, are trace-class.
\end{Proposition}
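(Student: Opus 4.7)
The strategy is to exploit the coupling built into the definition of $L^2$--$m$--approximability in order to control $\|C_h\|_\mathcal{S}$ by the $L^2$-distance $(E\|X_h-X_h^{(h)}\|^2)^{1/2}$ appearing in \eqref{lpm}. Since $\|\cdot\|_\mathcal{S}$ is dominated by the nuclear (trace) norm $\|\cdot\|_\mathcal{N}$, I will in fact bound $\|C_h\|_\mathcal{N}$, which simultaneously yields \eqref{e:abssymcov} and  the trace-class property of $\mathcal{F}_\theta^X$.

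The key observation is the following independence structure: for $h\geq 1$, the random element $X_0=f(\delta_0,\delta_{-1},\ldots)$ depends only on the innovations $\{\delta_i\colon i\leq 0\}$, whereas $X_h^{(h)}=f(\delta_h,\ldots,\delta_1,\delta'_0,\delta'_{-1},\ldots)$ is built from $\{\delta_i\colon i\geq 1\}\cup\{\delta'_i\colon i\leq 0\}$; these two families are independent by construction, so $X_h^{(h)}$ and $X_0$ are independent. Moreover, $X_h^{(h)}\stackrel{d}{=}X_h$, and in particular $EX_h^{(h)}=0$. Writing $C_h$ as the Bochner integral $C_h=E[X_h\otimes X_0]$, where $(u\otimes v)(x):=u\langle x,v\rangle$ is the rank-one operator with kernel $u(\cdot)\overline{v(\cdot)}$, I decompose
\begin{equation*}
C_h \;=\; E\bigl[(X_h-X_h^{(h)})\otimes X_0\bigr] \;+\; E\bigl[X_h^{(h)}\otimes X_0\bigr].
\end{equation*}
The second term vanishes: for every $x,y\in H$, independence and centeredness give $\langle E[X_h^{(h)}\otimes X_0](x),y\rangle=E[\langle x,X_0\rangle\langle X_h^{(h)},y\rangle]=E\langle x,X_0\rangle\cdot E\langle X_h^{(h)},y\rangle=0$.

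Step two is the norm bound. For a rank-one operator one has $\|u\otimes v\|_\mathcal{N}=\|u\otimes v\|_\mathcal{S}=\|u\|\,\|v\|$, and the random operator $(X_h-X_h^{(h)})\otimes X_0$ is Bochner-integrable in the nuclear class because $E\|(X_h-X_h^{(h)})\otimes X_0\|_\mathcal{N}=E\bigl[\|X_h-X_h^{(h)}\|\,\|X_0\|\bigr]<\infty$ by Cauchy--Schwarz and $X_t\in L^2_H$. Applying the triangle inequality for the Bochner integral and then Cauchy--Schwarz again yields, for $h\geq 1$,
\begin{equation*}
\|C_h\|_\mathcal{S}\;\leq\;\|C_h\|_\mathcal{N}\;\leq\;\bigl(E\|X_h-X_h^{(h)}\|^2\bigr)^{1/2}\bigl(E\|X_0\|^2\bigr)^{1/2}.
\end{equation*}
For $h\leq -1$, a symmetric bound follows from $C_h=C_{-h}^{*}$ (which preserves both norms) applied to the estimate just obtained for $C_{|h|}$; for $h=0$ one has the trivial bound $\|C_0\|_\mathcal{N}\leq E\|X_0\|^2$.

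Step three is summation. Using the previous display and the defining condition \eqref{lpm} with $p=2$,
\begin{equation*}
\sum_{h\in\mathbb{Z}}\|C_h\|_\mathcal{S}\;\leq\;\sum_{h\in\mathbb{Z}}\|C_h\|_\mathcal{N}\;\leq\;E\|X_0\|^2+2\bigl(E\|X_0\|^2\bigr)^{1/2}\sum_{m\geq 1}\bigl(E\|X_m-X_m^{(m)}\|^2\bigr)^{1/2}\;<\;\infty,
\end{equation*}
which establishes \eqref{e:abssymcov}. The same bound shows that the series $(2\pi)^{-1}\sum_h C_h e^{-\ii h\theta}$ converges absolutely in the Banach space of nuclear operators for every $\theta$, so its limit $\mathcal{F}_\theta^X$ is trace-class with $\|\mathcal{F}_\theta^X\|_\mathcal{N}\leq (2\pi)^{-1}\sum_{h\in\mathbb{Z}}\|C_h\|_\mathcal{N}<\infty$.

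The main technical point is the Bochner-integral manipulation in the nuclear class: one must verify strong measurability of $(X_h-X_h^{(h)})\otimes X_0$ (straightforward, since the tensor product map $H\times H\to\mathcal{N}(H)$ is continuous) and the bound $E\|\cdot\|_\mathcal{N}<\infty$. Once this setup is granted, the rest reduces to Cauchy--Schwarz and the definition of $L^2$--$m$--approximability.
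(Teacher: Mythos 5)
Your argument is correct. For the summability claim \eqref{e:abssymcov} you use exactly the paper's device: write $C_h=E[(X_h-X_h^{(h)})\otimes X_0]$ (the cross term vanishing by independence of $X_h^{(h)}$ and $X_0$ and centeredness) and apply Cauchy--Schwarz together with \eqref{lpm} for $p=2$. Where you genuinely diverge is the trace-class part. The paper does \emph{not} pass through nuclear-norm summability of the $C_h$'s; it instead considers the finite-sample operators $\mathcal{F}_{n,\theta}^X=E\,V_{n,\theta}\otimes V_{n,\theta}$ with $V_{n,\theta}=(2\pi n)^{-1/2}\sum_{k\le n}X_k e^{\ii k\theta}$, uses their non-negativity and self-adjointness plus monotone and dominated convergence to identify $\tr(\mathcal{F}_\theta^X)=\frac{1}{2\pi}\sum_h E\langle X_0,X_h\rangle e^{-\ii h\theta}$, and bounds this by $\sum_h|E\langle X_0,X_h\rangle|<\infty$ via the same coupling. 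You instead upgrade the Cauchy--Schwarz bound from the Hilbert--Schmidt norm to the Schatten 1-norm, obtaining $\sum_{h}\|C_h\|_\mathcal{T}\le E\|X_0\|^2+2(E\|X_0\|^2)^{1/2}\sum_{m\ge1}(E\|X_m-X_m^{(m)}\|^2)^{1/2}<\infty$, so that the defining series converges absolutely in the trace-class Banach space and $\mathcal{F}_\theta^X$ is trace-class with a bound uniform in $\theta$. This step is legitimate: the Bochner triangle inequality in the nuclear class and the identity $\|u\otimes v\|_\mathcal{T}=\|u\|\,\|v\|$ are exactly the facts the paper itself records in Appendix~C.2 ($\|C_{XY}\|_\mathcal{T}\le E\|X-\mu_X\|\,\|Y-\mu_Y\|$), and strong measurability of $(X_h-X_h^{(h)})\otimes X_0$ follows from continuity of the tensor map, as you note. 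Your route in fact proves the stronger Panaretos--Tavakoli condition $\sum_h\|C_h^X\|_\mathcal{T}<\infty$ under $L^2$--$m$--approximability -- a condition the paper remarks (after Proposition~\ref{pr:summab}) it found no simple criterion for -- while the paper's route buys a sharper and more explicit expression for the trace, $\tr(\mathcal{F}_\theta^X)\le\frac{1}{2\pi}\sum_h|E\langle X_0,X_h\rangle|$, without invoking nuclear-norm Bochner integration. Both are complete proofs of the stated proposition; only make sure you state explicitly (as the paper does) that the zero-mean assumption is in force, and that the pointwise nuclear-norm limit you construct coincides with the $\mathcal{F}_\theta^X$ defined through $L^2_{\mathcal{S}}([-\pi,\pi])$-convergence, which is immediate since \eqref{e:abssymcov} already gives pointwise absolute convergence in $\|\cdot\|_\mathcal{S}$.
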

Instead of Assumption \eqref{e:abssymcov}, Panaretos and Tavakoli~\cite{panaretos:tavakoli:2012} impose for the definition of a spectral density operator summability of $C_h^X$ in Schatten 1-norm, that is,~$\sum_{h\in\mathbb{Z}}\|C_h^X\|_\mathcal{T}<\infty$. Under such  slightly more stringent assumption, it immediately follows that the resulting spectral operator is trace-class. The verification of convergence may, however, be a bit delicate. At least, we could not find a simple criterion as in Proposition~\ref{pr:summab}.


\begin{Proposition}\label{pr:conteigen} Let $\mathcal{F}_\theta^X$ be the spectral density operator of a stationary sequence~$(X_t)$ for which the summability condition \eqref{e:abssymcov} holds.
Let $\lambda_1(\theta)\geq\lambda_2(\theta)\geq\cdots$ denote its eigenvalues and $\varphi_m(\theta)$ be the corresponding eigenfunctions. Then,
(a) the functions $\theta\mapsto \lambda_m(\theta)$ are continuous; (b)
if we strengthen \eqref{e:abssymcov} into the more stringent condition 
$
\sum_{h\in\mathbb{Z}}|h|\|C_h^X\|_\mathcal{S}<~\infty,
$
the $\lambda_m(\theta)$'s are Lipschitz-continuous functions of $\theta$;
(c) assuming that $(X_t)$ is real-valued, for each~$\theta \in [-\pi,\pi]$, $\lambda_m(\theta)=\lambda_m(-\theta)$ and $\varphi_m(\theta)=\overline{\varphi_m(-\theta)}$.
\end{Proposition}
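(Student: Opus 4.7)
The common engine for all three parts is that eigenvalues of a self-adjoint compact operator are stable under perturbations in operator norm. Specifically, for self-adjoint, compact operators $A,B$ on a Hilbert space, the min-max (Courant-Fischer) characterization gives $|\lambda_m(A)-\lambda_m(B)|\le \|A-B\|_{\mathrm{op}}\le \|A-B\|_\cS$ for every $m\ge 1$. I will reduce everything to controlling $\|\mathcal{F}^X_{\theta_1}-\mathcal{F}^X_{\theta_2}\|_\cS$ via the series representation.

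\textbf{Part (a).} Writing $\mathcal{F}^X_\theta=(2\pi)^{-1}\sum_{h\in\mathbb{Z}}C^X_h e^{-\ii h\theta}$ and invoking the triangle inequality in Hilbert-Schmidt norm, I obtain
\begin{equation*}
\|\mathcal{F}^X_{\theta_1}-\mathcal{F}^X_{\theta_2}\|_\cS\le \frac{1}{2\pi}\sum_{h\in\mathbb{Z}}\|C^X_h\|_\cS\,|e^{-\ii h\theta_1}-e^{-\ii h\theta_2}|.
\end{equation*}
By assumption \eqref{e:abssymcov} the series $\sum_h\|C^X_h\|_\cS$ is finite. Since each summand is bounded by $2\|C^X_h\|_\cS$ and tends to $0$ as $\theta_2\to\theta_1$, dominated convergence yields $\|\mathcal{F}^X_{\theta_1}-\mathcal{F}^X_{\theta_2}\|_\cS\to 0$. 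Continuity of $\theta\mapsto \lambda_m(\theta)$ then follows from the Weyl-type inequality above.

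\textbf{Part (b).} Under the strengthened summability assumption I use $|e^{-\ii h\theta_1}-e^{-\ii h\theta_2}|\le |h|\,|\theta_1-\theta_2|$ in the bound from Part (a) to obtain
\begin{equation*}
\|\mathcal{F}^X_{\theta_1}-\mathcal{F}^X_{\theta_2}\|_\cS\le \frac{|\theta_1-\theta_2|}{2\pi}\sum_{h\in\mathbb{Z}}|h|\,\|C^X_h\|_\cS,
\end{equation*}
which is a Lipschitz bound. Combined with the Weyl-type inequality this gives $|\lambda_m(\theta_1)-\lambda_m(\theta_2)|\le K|\theta_1-\theta_2|$ with $K=(2\pi)^{-1}\sum_h|h|\|C^X_h\|_\cS$.

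\textbf{Part (c).} Because $(X_t)$ is real-valued, each kernel $c_h(u,v)=\cov(X_h(u),X_0(v))$ is real-valued; hence $C^X_h=\overline{C^X_h}$, where the overline denotes entrywise complex conjugation of the kernel. Using $\overline{e^{-\ii h\theta}}=e^{\ii h\theta}=e^{-\ii h(-\theta)}$ in the defining series yields $\overline{\mathcal{F}^X_\theta}=\mathcal{F}^X_{-\theta}$. Since the eigenvalues of $A$ and $\overline{A}$ coincide (they are the roots of the same characteristic quantities, or directly: if $A\varphi=\lambda\varphi$ then $\overline{A}\,\overline{\varphi}=\overline{\lambda}\,\overline{\varphi}$), and since the eigenvalues of the self-adjoint operator $\mathcal{F}^X_\theta$ are real, I conclude $\lambda_m(-\theta)=\lambda_m(\theta)$. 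For the eigenfunctions, the same conjugation argument applied to $\mathcal{F}^X_\theta\varphi_m(\theta)=\lambda_m(\theta)\varphi_m(\theta)$ shows that $\overline{\varphi_m(\theta)}$ is an eigenfunction of $\mathcal{F}^X_{-\theta}$ associated with $\lambda_m(-\theta)$. Under the (implicit) simple-spectrum setting, eigenspaces are one-dimensional, so $\overline{\varphi_m(\theta)}$ coincides with $\varphi_m(-\theta)$ up to a unimodular factor; by making a measurable selection of versions (as explained in Section~\ref{se:A1}) one can enforce the equality $\varphi_m(-\theta)=\overline{\varphi_m(\theta)}$.

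\textbf{Main obstacle.} Parts (a) and (b) are essentially direct; the only delicate point is the identification of $\varphi_m(-\theta)$ with $\overline{\varphi_m(\theta)}$ in Part (c), since eigenfunctions are only defined up to a multiplicative unimodular constant. Handling this amounts to exhibiting a consistent measurable choice of versions, along the lines of the discussion preceding \eqref{e:phiml}.
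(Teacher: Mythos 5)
Your proof is correct and follows essentially the same route as the paper: parts (a) and (b) use the eigenvalue perturbation bound $|\lambda_m(\theta)-\lambda_m(\theta')|\le\|\mathcal{F}^X_\theta-\mathcal{F}^X_{\theta'}\|_\mathcal{S}$ together with the termwise bound on $\sum_h\|C_h^X\|_\mathcal{S}|e^{-\ii h\theta}-e^{-\ii h\theta'}|$, and part (c) conjugates the eigenequation of the real-valued process to identify $(\lambda_m(\theta),\overline{\varphi_m(\theta)})$ as an eigenpair of $\mathcal{F}^X_{-\theta}$, exactly as in the paper's argument. Your added remark on fixing the unimodular factor by a measurable choice of versions is a point the paper leaves implicit, but it does not change the substance of the proof.
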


Let $\overline{x}$ be the conjugate element of $x$, i.e.\ $\langle x,z\rangle= \langle z,\overline{x}\rangle$ for all $z\in H$. Then $x$ is real-valued iff $x=\overline{x}$.

\begin{Remark}\label{rem:2}
Since $\varphi_m(\theta)$ is Hermitian, it immediately follows that~$\phi_{m\ell}=\overline{\phi_{m\ell}},$
implying that the dynamic FPCs are real if the process $(X_t)$ is. 
\end{Remark}

\subsection{Functional filters}\label{ss:ffilters}

Computation of dynamic FPCs requires applying time-invariant {\it functional filters} to the process $(X_t)$. Let $\boldsymbol{\Psi}=(\Psi_{k}\colon k\in\mathbb{Z})$ be a sequence of linear operators  mapping  the separable Hilbert space  $H$ to the separable Hilbert space $H^{\prime}$.  Let $B$ be the backshift or lag operator, defined by~$B^k X_t:=X_{t-k}$, $k\in\mathbb{Z}$. Then the functional filter~$\Psi(B):=\sum_{k\in\mathbb{Z}}\Psi_kB^k$, when applied to the sequence $(X_t)$, produces an output series $(Y_t)$ in $H^{\prime}$ via
\begin{equation}\label{e:Y}
Y_t=\Psi(B)X_t=\sum_{k\in\mathbb{Z}}\Psi_k(X_{t-k}).
\end{equation} 
Call $\boldsymbol{\Psi}$ the sequence of filter coefficients, and,
in the style of the scalar or vector time series terminology,   call 
\begin{equation}\label{e:Psi}
\Psi_\theta=\Psi(e^{-\ii\theta})=\sum_{k\in\mathbb{Z}}\Psi_ke^{-\ii k\theta}
\end{equation}
the {\em frequency response function} of the filter $\Psi(B)$. Of course, series \eqref{e:Y} and \eqref{e:Psi} only have a meaning if they converge in an appropriate sense. Below we use the following technical lemma.

\begin{Proposition}\label{pr:sd_filter0} 
Suppose that $(X_t)$ is a stationary sequence in $L_H^2$ and possesses a spectral density operator satisfying \ $\sup_\theta \tr(\mathcal{F}_\theta^X)<\infty$.
Consider a filter $(\Psi_k)$ such that $\Psi_\theta$ converges in $L^2_{\mathcal{S}(H,H^{\prime})}([-\pi,\pi])$, and suppose that $\sup_{\theta}\|\Psi_\theta\|_{\mathcal{S}(H,H^{\prime})}<\infty$. Then,
\begin{enumerate}[(i)]
\item  the series $Y_t:=\sum_{k\in\mathbb{Z}}\Psi_k(X_{t-k})$ converges in $L^2_{H^{\prime}}$;
\item $(Y_t)$ possesses the  spectral density operator  $\mathcal{F}_\theta^{Y}=\Psi_\theta\mathcal{F}_\theta^X(\Psi_\theta)^*$;
\item $\sup_\theta\tr(\mathcal{F}_\theta^Y)<\infty$.
\end{enumerate}
\end{Proposition}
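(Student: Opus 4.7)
My approach rests on two standard tools. First, by Proposition~\ref{pr:Hfourier} applied in $L^2_{\mathcal{S}(H,H')}([-\pi,\pi])$, the hypothesis on $\Psi_\theta$ is equivalent to $\sum_k\|\Psi_k\|^2_{\mathcal{S}}<\infty$, and the truncated frequency responses $\Psi^{(N)}_\theta:=\sum_{|k|\leq N}\Psi_k e^{-\ii k\theta}$ converge to $\Psi_\theta$ in $L^2_{\mathcal{S}}$. Second, for non-negative trace-class $\SPDO$ and Hilbert--Schmidt $A,B$, the inequalities $\tr(A\SPDO A^*)\leq\|A\|_{op}^2\tr(\SPDO)\leq\|A\|_{\mathcal{S}}^2\tr(\SPDO)$ and $\|A\SPDO B\|_{\mathcal{S}}\leq\|A\|_{\mathcal{S}}\|\SPDO\|_{op}\|B\|_{op}$ are standard.

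\textbf{Part (i).} I show that $S_N:=\sum_{|k|\leq N}\Psi_k(X_{t-k})$ is Cauchy in $L^2_{H'}$. Expanding the norm and using Remark~\ref{rem3} ($C_h^X=\int \SPDO_\theta^X e^{\ii h\theta}d\theta$) gives, after rearrangement,
\[
E\|S_N-S_M\|^2 = \int_{-\pi}^\pi \tr\!\big(\Psi^{(N,M)}_\theta \SPDO_\theta^X (\Psi^{(N,M)}_\theta)^*\big)\,d\theta,
\]
with $\Psi^{(N,M)}_\theta:=\Psi^{(N)}_\theta-\Psi^{(M)}_\theta$. The first inequality above bounds this by $(\sup_\theta \tr \SPDO_\theta^X)\int_{-\pi}^\pi \|\Psi^{(N,M)}_\theta\|^2_{\mathcal{S}}\, d\theta$, which vanishes as $M,N\to\infty$ by the $L^2_{\mathcal{S}}$-convergence of $\Psi^{(N)}$.

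\textbf{Part (ii).} For the truncated filter, $Y_t^{(N)}:=\sum_{|k|\leq N}\Psi_k(X_{t-k})$ is a finite linear combination and a direct computation of Fourier coefficients, using Remark~\ref{rem3} again, shows that $\Psi^{(N)}_\theta \SPDO_\theta^X (\Psi^{(N)}_\theta)^*$ is the spectral density of $(Y_t^{(N)})$. I then pass $N\to\infty$ on each side. On the time-domain side, $Y_t^{(N)}\to Y_t$ in $L^2_{H'}$ forces $C_h^{Y^{(N)}}\to C_h^Y$ in trace norm for every $h$. On the spectral side, writing
\[
\Psi_\theta \SPDO_\theta^X \Psi_\theta^* - \Psi^{(N)}_\theta \SPDO_\theta^X (\Psi^{(N)}_\theta)^* = (\Psi_\theta-\Psi^{(N)}_\theta)\SPDO_\theta^X \Psi_\theta^* + \Psi^{(N)}_\theta \SPDO_\theta^X (\Psi_\theta-\Psi^{(N)}_\theta)^*,
\]
the second inequality above and Cauchy--Schwarz yield $\Psi^{(N)}_\theta \SPDO_\theta^X (\Psi^{(N)}_\theta)^* \to \Psi_\theta \SPDO_\theta^X \Psi_\theta^*$ in $L^1_{\mathcal{S}(H',H')}([-\pi,\pi])$. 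Passing to the limit in the Fourier-coefficient integrals then identifies the $k$-th Fourier coefficient of $\Psi_\theta \SPDO_\theta^X \Psi_\theta^*$ as $\frac{1}{2\pi}C^Y_{-k}$. Since $\|\Psi_\theta \SPDO_\theta^X \Psi_\theta^*\|_{\mathcal{S}} \leq \|\Psi_\theta\|_{\mathcal{S}}^2 \|\SPDO_\theta^X\|_{op}$ is uniformly bounded, this product lies in $L^2_{\mathcal{S}}$; another application of Proposition~\ref{pr:Hfourier} shows that $(2\pi)^{-1}\sum_h C_h^Y e^{-\ii h\theta}$ converges to it in $L^2_{\mathcal{S}}$, so $(Y_t)$ admits a spectral density that coincides with $\Psi_\theta \SPDO_\theta^X \Psi_\theta^*$.

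\textbf{Part (iii) and main obstacle.} Part (iii) is immediate from (ii) together with $\tr(\Psi_\theta \SPDO_\theta^X \Psi_\theta^*)\leq \|\Psi_\theta\|_{\mathcal{S}}^2 \tr(\SPDO_\theta^X)$ and the two standing sup-bounds. The delicate step is (ii): because $\Psi_\theta$ is only defined as an $L^2_{\mathcal{S}}$-limit, pointwise manipulations are unavailable, and $\Psi_\theta \SPDO_\theta^X \Psi_\theta^*$ must be identified with $\mathcal{F}_\theta^Y$ at the level of Fourier coefficients. The bookkeeping -- maintaining simultaneous control of three distinct operator-ideal norms (operator norm for the factor $\SPDO_\theta^X$, Hilbert--Schmidt norm for membership in $L^2_{\mathcal{S}}$, and trace/nuclear norm for coefficient-wise convergence of the $C_h^{Y^{(N)}}$) -- is the principal technical effort.
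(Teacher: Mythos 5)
Your proof is correct and follows essentially the same route as the paper's: truncate the filter, use the trace identity $E\|Z_t\|^2=\int\tr(\mathcal{F}^Z_\theta)d\theta$ together with $\sup_\theta\tr(\mathcal{F}^X_\theta)<\infty$ and the $L^2_{\mathcal{S}}$-convergence of $\Psi^{(N)}_\theta$ to get the Cauchy property, then identify $\Psi_\theta\mathcal{F}^X_\theta\Psi_\theta^*$ as the spectral density of $(Y_t)$ by passing to the limit in the Fourier-coefficient integrals via Proposition~\ref{pr:Hfourier}, with (iii) immediate. Your norm bookkeeping merely spells out the ``routine arguments'' the paper leaves implicit.
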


\noindent In particular, the last proposition allows for iterative applications. If $\sup_\theta \tr(\mathcal{F}_\theta^X)<~\infty$ and $\Psi_\theta$ satisfies the above properties, then  analogue results   apply to the output $Y_t$. This is what we are using in the proofs of Theorems~1 and 2.

\subsection{Proofs for Section~\ref{se:metho}}

To start with, observe that Propositions~\ref{pr:sd_filter} and~\ref{pr:secondorder} directly follow  from Proposition~\ref{pr:sd_filter0}. Part~(a) of Proposition~\ref{pr:elementary} also has been established in the previous Section (see Remark~\ref{rem:2}), and part (b) is immediate. Thus, we can proceed to the proof of  Theorems~1 and~2. 


\begin{proof}[Proof of Theorems~\ref{th:inversion} and~\ref{th:optimality}]
Assume we have filter coefficients $\boldsymbol{\Psi}=(\Psi_k\colon k\in\mathbb{Z})$ and $\boldsymbol{\Upsilon}=(\Upsilon_k\colon k\in\mathbb{Z})$, where $\Psi_k:H\to \mathbb{C}^p$ and $\Upsilon_k:\mathbb{C}^p\to H$ both belong to the class $\mathcal{C}$. If $(X_t)$ and $(Y_t)$ are $H$-valued and $\mathbb{C}^p$-valued processes, respectively, then there exist elements $\psi_{mk}$ and $\upsilon_{mk}$ in $H$ such that
$$
\Psi(B)(X_t)=\sum_{k\in\mathbb{Z}}\left(\langle X_{t-k},\psi_{1k}\rangle,\ldots,\langle X_{t-k},\psi_{pk}\rangle\right)^{\prime}
\vspace{-2mm}$$ 
and
$$
\Upsilon(B)(Y_t)=\sum_{\ell\in\mathbb{Z}}\sum_{m=1}^pY_{t+\ell,m}
\upsilon_{m\ell}.
$$
Hence, the $p$-dimensional reconstruction of $X_t$ in Theorem~\ref{th:optimality} is of the form $$\sum_{m=1}^p\tilde X_{mt}=\Upsilon(B)[\Psi(B)X_t]=:\Upsilon\Psi(B)X_t.$$
Since  $\boldsymbol{\Psi}$ and $\boldsymbol{\Upsilon}$ are required  to belong to $\mathcal{C}$, we conclude from Proposition~\ref{pr:sd_filter0} that the processes $Y_t:=\Psi(B)X_t$ and $\tilde X_t=\Upsilon(B)Y_t$ are mean-square convergent and possess a spectral density operator. 
Letting $\psi_m(\theta)=\sum_{k\in\mathbb{Z}}\psi_{mk}e^{\ii k\theta}$ and $\upsilon_m(\theta)=\sum_{\ell\in\mathbb{Z}}\upsilon_{m\ell}e^{\ii \ell\theta}$, we obtain, for $x\in H$ and $y=(y_1,\ldots,y_m)^{\prime}\in\mathbb{C}^p$, that the frequency response functions $\Psi_\theta$ and $\Upsilon_\theta$ satisfy
$$
\Psi_\theta(x)=\sum_{k\in\mathbb{Z}}\left(\langle x,\psi_{1k}\rangle,\ldots,\langle x,\psi_{pk}\rangle\right)^{\prime}e^{-\ii k\theta}=\left(\langle x,\psi_1(\theta)\rangle,\ldots,\langle x,\psi_p(\theta)\rangle\right)^{\prime}
\vspace{-2mm}$$
and
$$
\Upsilon_\theta(y)=\sum_{\ell\in\mathbb{Z}}\sum_{m=1}^py_m  \upsilon_{m\ell}e^{-\ii \ell\theta}=\sum_{m=1}^py_m \upsilon_m(-\theta).
$$
Consequently,
\begin{equation}\label{BA}
\Upsilon_\theta \Psi_\theta=\sum_{m=1}^p \upsilon_m(-\theta)\otimes \psi_m(\theta).
\end{equation}
Now, using Proposition~\ref{pr:sd_filter0}, it is readily verified that, for $Z_t:=X_t-\Upsilon\Psi(B)X_t$, we obtain the spectral density operator
\begin{equation}
\mathcal{F}^{Z}_\theta
=\left(\tilde{\mathcal{F}}^X_\theta-\Upsilon_\theta \Psi_\theta \tilde{\mathcal{F}}^X_\theta\right)\left(\tilde{\mathcal{F}}^X_\theta- \tilde{\mathcal{F}}^X_\theta \Psi^*_\theta \Upsilon_\theta^* \right),
\end{equation}
where $\tilde{\mathcal{F}}^X_\theta$ is such that $\tilde{\mathcal{F}}^X_\theta\tilde{\mathcal{F}}^X_\theta= \mathcal{F}^X_\theta.$ 

Using Lemma~\ref{le:trace},
\begin{equation}
E\|X_t-\Upsilon\Psi(B)X_t\|^2
=\int_{-\pi}^\pi\tr(\mathcal{F}_\theta^Z)\,d\theta
=\int_{-\pi}^\pi\Big\|\tilde{\mathcal{F}}^X_\theta-\Upsilon_\theta \Psi_\theta \tilde{\mathcal{F}}^X_\theta\Big\|_\mathcal{S}^2d\theta.\label{e:tr}
\end{equation}
Clearly, \eqref{e:tr} is minimized if we minimize the integrand for every fixed $\theta$ under the constraint that $\Upsilon_\theta \Psi_\theta$ is of the form \eqref{BA}. Employing the eigendecompo\-sition $
\mathcal{F}^X_\theta=\sum_{m\geq 1}\lambda_m(\theta)\varphi_m(\theta)\otimes\varphi_m(\theta)$, 
we infer that
$$
\tilde{\mathcal{F}}^X_\theta=\sum_{m\geq 1}\sqrt{\lambda_m(\theta)}\varphi_m(\theta)\otimes\varphi_m(\theta).
\vspace{-2mm}$$
The best approximating operator of rank $p$ to $\tilde{\mathcal{F}}_\theta^X$ is the operator
$$
\tilde{\mathcal{F}}^X_\theta(p)=\sum_{m= 1}^p\sqrt{\lambda_m(\theta)}\varphi_m(\theta)\otimes\varphi_m(\theta),
$$
which  is obtained if we choose $\Upsilon_\theta \Psi_\theta=\sum_{m=1}^p\varphi_m(\theta)\otimes\varphi_m(\theta)$ and hence
$$
\psi_m(\theta)=\varphi_m(\theta)\quad\text{and}\quad \upsilon_m(\theta)=\varphi_m(-\theta).
$$
Consequently, by Proposition~\ref{pr:Hfourier}, we get
$$
\psi_{mk}=\frac{1}{2\pi}\int_{-\pi}^\pi\varphi_m(s)e^{-\ii ks}ds\quad \text{and}\quad
\upsilon_{mk}=\frac{1}{2\pi}\int_{-\pi}^\pi\varphi_m(-s)e^{-\ii ks}ds=\psi_{m,-k}.
$$
With this choice, it is clear that 
$\Upsilon\Psi(B)X_t=\sum_{m=1}^p X_{mt}$ and 
$$E\|X_t-\sum_{m=1}^pX_{mt}\|^2=\int_{-\pi}^\pi\Big\|\tilde{\mathcal{F}}^X_\theta- \tilde{\mathcal{F}}^X_\theta(p)\Big\|_\mathcal{S}^2d\theta=\int_{-\pi}^\pi\sum_{m>p}\lambda_m(\theta)d\theta;$$
the proof of Theorem~\ref{th:optimality} follows.

Turning to Theorem~\ref{th:inversion}, observe that, by the monotone convergence theorem, the last integral tends to zero if~$p\to \infty$, which completes the proof of Theorem~\ref{th:inversion}.
\end{proof}

\section{Large sample properties}\label{app:largesampleproperties}
For the proof of Theorem~\ref{th:dfpc:consistency}, let us  show that $E | Y_{mt} - \hat Y_{mt} |\to 0$ as $n\to\infty$.

Fixing  $L\geq 1$, \vspace{-2mm}
\begin{align}
E | Y_{mt} - \hat Y_{mt} | &\leq E \bigg|\Sum_{j \in \mathbb{Z}} \ip{X_{t-j}}{\phi_{mj}} - \Sum_{j=-L}^L \ip{X_{t-j}}{\hat\phi_{mj}} \bigg|\nonumber\\
&\leq E\bigg|\Sum_{j=-L}^{L} \ip{X_{t-j}}{\phi_{mj} - \hat\phi_{mj}}\bigg| + E\bigg|\Sum_{|j| > L} \ip{X_{t-j}}{\phi_{mj}} \bigg|,\label{proof:split}
\end{align}
and the result follows if each summand in \eqref{proof:split} converges to zero, which we prove in the two subsequent lemmas. For notational convenience, we often suppress the dependence on the sample size $n$; all limits below, however, are taken as $n\to\infty$.
\begin{lemma}\label{lem:sum2}
If $L=L(n)\to\infty$ sufficiently slowly, then, under Assumptions $B.1$--$B.3$, we have that
 $
\Big|\Sum_{|j| \leq L} \ip{X_{k-j}}{\phi_{mj} - \hat \phi_{mj}}\Big| =o_P(1).
$ 
\end{lemma}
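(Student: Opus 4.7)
The plan is to split the inner product sum via Cauchy--Schwarz, reduce the filter-coefficient differences to an integrated distance between eigenfunctions via Parseval, and then invoke operator perturbation theory together with Assumptions~B.1--B.3 applied to the spectral density operator. Finally, $L=L(n)$ is taken to grow slowly enough.

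First I would write
\begin{align*}
\Bigl|\Sum_{|j|\leq L}\ip{X_{k-j}}{\phi_{mj}-\hat\phi_{mj}}\Bigr|^{2}\leq\Bigl(\Sum_{|j|\leq L}\|X_{k-j}\|^{2}\Bigr)\Bigl(\Sum_{j\in\mathbb Z}\|\phi_{mj}-\hat\phi_{mj}\|^{2}\Bigr).
\end{align*}
Since $\phi_{mj}$ and $\hat\phi_{mj}$ are the $H$-valued Fourier coefficients of $\varphi_m$ and $\hat\varphi_m$, Proposition~\ref{pr:Hfourier} yields the Parseval identity $\Sum_{j\in\mathbb Z}\|\phi_{mj}-\hat\phi_{mj}\|^{2}=\frac{1}{2\pi}\int_{-\pi}^{\pi}\|\varphi_m(\theta)-\hat\varphi_m(\theta)\|^{2}d\theta=:\xi_n.$ Stationarity together with $E\|X_0\|^{2}<\infty$ and Markov's inequality give $\Sum_{|j|\leq L}\|X_{k-j}\|^{2}=O_{P}(L)$, so the whole left-hand side is $O_{P}(\sqrt{L\,\xi_n})$, and the entire lemma reduces to showing that $\xi_n=o_{P}(1)$ and calibrating $L$.

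To control $\xi_n$, I would argue along subsequences. Assumption~B.1 combined with Markov delivers $\int_{-\pi}^{\pi}\|\hat{\mathcal F}_\theta^X-\mathcal F_\theta^X\|_{\mathcal S}^{2}d\theta\to 0$ in probability; along an arbitrary subsequence one can extract a further sub-subsequence along which this convergence is almost sure, and then a diagonal extraction gives $\|\hat{\mathcal F}_\theta^X-\mathcal F_\theta^X\|_{\mathcal S}\to 0$ for Lebesgue-a.e.\ $\theta$, almost surely. By Assumption~B.2, $\alpha_m(\theta)>0$ for a.e.\ $\theta$, so at each such $\theta$ a Bosq/Davis--Kahan perturbation bound
$\bigl\|\hat\varphi_m(\theta)-c_n(\theta)\,\varphi_m(\theta)\bigr\|\leq 2\sqrt{2}\,\alpha_m(\theta)^{-1}\|\hat{\mathcal F}_\theta^X-\mathcal F_\theta^X\|_{\mathcal S}\to 0$
applies, with some unit-modulus phase $c_n(\theta)$. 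Assumption~B.3, together with the orientation convention $\ip{\varphi_m(\theta)}{v},\ip{\hat\varphi_m(\theta)}{v}\in[0,\infty)$, forces $c_n(\theta)\to 1$ for a.e.\ such $\theta$. Since $\|\hat\varphi_m(\theta)-\varphi_m(\theta)\|^{2}\leq 4$ provides an integrable dominant, dominated convergence yields $\xi_n\to 0$ along the sub-subsequence, hence $\xi_n=o_{P}(1)$ by the subsequence principle. Given $\epsilon_n\downarrow 0$ with $P(\xi_n>\epsilon_n)\to 0$, setting $L(n):=\lfloor\epsilon_n^{-1/2}\rfloor$ makes $L\,\xi_n=o_{P}(1)$, closing the argument.

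The main obstacle will be the eigenfunction-convergence step. The integrated mean-square convergence of $\hat{\mathcal F}_\theta^X$ does not hand over pointwise convergence directly, and since $\alpha_m(\theta)$ is allowed to vanish at finitely many frequencies the naive expectation bound $E\xi_n\lesssim\int\alpha_m(\theta)^{-2}E\|\hat{\mathcal F}_\theta^X-\mathcal F_\theta^X\|_{\mathcal S}^{2}d\theta$ is not available. Working through the subsequence-plus-dominated-convergence scheme, and identifying the complex phase via Assumption~B.3, is precisely what enables one to dispense with any quantitative control of $1/\alpha_m$.
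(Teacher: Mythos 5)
Your proposal is correct in substance but follows a genuinely different route from the paper. You split the sum by an $\ell^2$--$\ell^2$ Cauchy--Schwarz bound and reduce everything to the integrated squared distance $\xi_n=\frac{1}{2\pi}\int_{-\pi}^{\pi}\|\varphi_m(\theta)-\hat\varphi_m(\theta)\|^2d\theta$ via Parseval, then prove $\xi_n=o_P(1)$ by a soft argument: pass to subsequences, apply the Bosq-type perturbation bound pointwise at the (full-measure set of) frequencies where $\alpha_m(\theta)>0$, identify the unit-modulus phase $c_n(\theta)\to1$ directly from Assumption~B.3 and the orientation convention, and conclude by dominated convergence and the subsequence criterion. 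The paper instead uses an $\ell^\infty$--$\ell^1$ split ($\max_j\|\phi_{mj}-\hat\phi_{mj}\|\cdot\sum_{|j|\le L}\|X_{k-j}\|$), bounds $\max_j\|\phi_{mj}-\hat\phi_{mj}\|$ by the integrated $L^1$ distance, and splits it into an aligned part $Q_1$ and a phase part $Q_2$: $Q_1$ is handled quantitatively by excising $\varepsilon$-neighbourhoods of the finitely many zeros of $\alpha_m$ (giving the explicit bound $4K\varepsilon+8M_\varepsilon^2\int\|\mathcal F_\theta^X-\hat{\mathcal F}_\theta^X\|d\theta$ and a slowly vanishing $\varepsilon_n$), while $Q_2$ is dispatched by a contradiction argument using B.3. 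Your pointwise treatment of the phase is arguably cleaner than the paper's $Q_2$ contradiction, and your reduction to $\xi_n$ gives the slightly sharper $O_P(\sqrt{L\xi_n})$ bound; the paper's argument, on the other hand, is more quantitative and avoids measure-theoretic subsequence extractions, which keeps it closer to an explicit rate and to the practical choice of $L$.

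One step as written would not survive scrutiny: ``a diagonal extraction gives $\|\hat{\mathcal F}_\theta^X-\mathcal F_\theta^X\|_{\mathcal S}\to0$ for a.e.\ $\theta$, almost surely.'' Almost-sure convergence of the integrals only gives, for each fixed $\omega$, $L^2(d\theta)$-convergence of $\theta\mapsto\|\hat{\mathcal F}_\theta^X(\omega)-\mathcal F_\theta^X\|_{\mathcal S}$, and the further subsequence needed for $\theta$-a.e.\ convergence depends on $\omega$; a diagonal argument over uncountably many $\omega$ is not available. The repair is standard and stays within your scheme: Assumption~B.1 says precisely that $\|\hat{\mathcal F}_\theta^X-\mathcal F_\theta^X\|_{\mathcal S}\to0$ in $L^2(P\otimes\mathrm{Leb})$, so from any subsequence you can extract a further subsequence along which convergence holds for $P\otimes\mathrm{Leb}$-almost every $(\omega,\theta)$, and Fubini then gives, for a.e.\ $\omega$, convergence for a.e.\ $\theta$; the rest of your dominated-convergence and subsequence-principle argument goes through unchanged. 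With that one-line correction, and the (tacit, as in the paper) measurability of $\theta\mapsto\hat\varphi_m(\theta)$ needed for Parseval, your proof is complete.
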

\begin{proof}
The triangle  and  Cauchy-Schwarz inequalities yield
\begin{align*}
\Big|\Sum_{|j| \leq L} \ip{X_{k-j}}{\phi_{mj} - \hat \phi_{mj}}  \Big| &\leq \Sum_{j=-L}^L \| X_{k-j} \| \| \phi_{mj} - \hat \phi_{mj} \| \\
&\leq \max_{j\in \mathbb{Z}} \| \phi_{mj} - \hat \phi_{mj} \| \Sum_{j=-L}^L \| X_{k-j} \|.
\end{align*}
Let $\hat c_m(\theta):=\langle \phi_m(\theta),\hat\phi_m(\theta)\rangle/|\langle \phi_m(\theta),\hat\phi_m(\theta)\rangle|$. Jensen's inequality and the triangular inequality imply that, for any~$j \in \mathbb{Z}$,
\begin{align*}
2 \pi \| \phi_{mj} - \hat \phi_{mj} \| &= \Big\|\int_{-\pi}^\pi (\varphi_{m}(\theta) - \hat \varphi_{m}(\theta)) e^{\ii j\theta} d\theta \Big\|          
\leq \int_{-\pi}^\pi \|\varphi_{m}(\theta) - \hat \varphi_{m}(\theta)\| d\theta\\
&\leq \int_{-\pi}^\pi \|\varphi_{m}(\theta) -  \hat c_m(\theta)\hat \varphi_{m}(\theta)\| d\theta+
\int_{-\pi}^\pi|1-\hat c_m(\theta)|d\theta\\
&=:Q_1 +Q_2 .
\end{align*}
By Lemma~3.2 in \cite{hormann:kokoszka:2010}, we have
$$
Q_1 \leq \int_{-\pi}^\pi \frac{8}{|\alpha_m(\theta)|^2} \| \mathcal{F}_\theta^X - \hat{ \mathcal{F}}_\theta^X\|_\cS \wedge 2\ d\theta.
$$

By Assumption~B.2, $\alpha_m(\theta)$ has only finitely many zeros, $\theta_1,\ldots,\theta_K$, say. Let  $\delta_\varepsilon (\theta):=[\theta - \varepsilon,\theta + \varepsilon]$ and $A(m,\varepsilon):= \bigcup_{i=1}^K \delta_\varepsilon(\theta_i)$. By definition, the Lebesgue measure of this set is $|A(m,\varepsilon)| \leq 2K\varepsilon$. Define $M_\varepsilon$ such that
$$M_\varepsilon^{-1} = \min\{ \alpha_m(\theta)\ |\ \theta \in [-\pi,\pi] \backslash A(m,\varepsilon) \}.$$
 By continuity of $\alpha_m(\theta)$ (see Proposition~\ref{pr:conteigen}), we have $M_\varepsilon < \infty$, and thus
\begin{align*}
\int_{-\pi}^\pi \frac{8}{|\alpha_m(\theta)|^2} \| \mathcal{F}_\theta^X - \hat{ \mathcal{F}}_\theta^X\| \wedge 2 d\theta \leq 4K\varepsilon + 8M_\varepsilon^2\int_{-\pi}^\pi \| \mathcal{F}_\theta^X - \hat{\mathcal{F}}_\theta^X\| d\theta =: B_{n, \varepsilon}.
\end{align*}
By Assumption B.1, there exists a sequence $\varepsilon_n \rightarrow 0$ such that $B_{n,\varepsilon_n} \rightarrow 0$ in probability, which entails  $Q_1 = o_P(1)$. Note that this also  implies  
\begin{equation}\label{Q1cons}
\int_{-\pi}^\pi\big|\langle \varphi_m(\theta),v\rangle -\hat c_m(\theta)\langle \hat\varphi_m(\theta),v\rangle\big|d\theta =o_P(1).
\end{equation}

Turning to $Q_2$, suppose that $Q_2$ is not $o_P(1)$ Then, there exists  $\varepsilon>0$ and  $\delta>0$ such that ,for infinitely many $n$,   $P(Q_2 \geq \varepsilon)\geq \delta$. Set 
$$F=F_n:=\left\{\theta\in[-\pi,\pi]\colon|\hat c_m(\theta)-1|\geq \frac{\varepsilon}{4\pi}\right\}.$$
 One can easily show that, on the set $\{Q_2 \geq \varepsilon\}$, we have $\lambda(F)>\varepsilon/4$. Clearly, $|\hat c_m(\theta)-1|\geq {\varepsilon}/{4\pi}$ implies that $\hat c_m(\theta)=e^{\ii z(\theta)}$ with $z(\theta)\in[-\pi/2,-\varepsilon^{\prime}]\cup[\varepsilon^{\prime},\pi/2]$, for some small enough $\varepsilon^{\prime}$. Then the left-hand side in \eqref{Q1cons} is bounded from below by
\begin{align}
&\int_F\big|\langle \varphi_m(\theta),v\rangle -\hat c_m(\theta)\langle \hat\varphi_m(\theta),v\rangle\big|d\theta\nonumber\\
&\quad=\int_F\left(
\langle (\varphi_m(\theta),v\rangle-\cos(z(\theta)\langle \hat\varphi_m(\theta),v\rangle)^2+(\sin^2(z(\theta))\langle \hat\varphi_m(\theta),v\rangle^2
\right)^{1/2}d\theta.\label{below}
\end{align}
Write $F:=F^{\prime}\cup F^{\prime\prime}$, where
\begin{align*}
F^{\prime}&:=F\cap\left\{\theta\colon  |\langle \varphi_m(\theta),v\rangle -\cos(z(\theta))\langle \hat\varphi_m(\theta),v\rangle\big|\geq \frac{\langle \varphi_m(\theta),v\rangle}{2}\right\}\quad\text{and}\\
F^{\prime\prime}&:=F\cap\left\{\theta\colon  |\langle \varphi_m(\theta),v\rangle -\cos(z(\theta))\langle \hat\varphi_m(\theta),v\rangle\big|< \frac{\langle \varphi_m(\theta),v\rangle}{2}\right\}.
\end{align*}
On $F^{\prime}$, the integrand \eqref{below} is greater than or equal to $\langle \varphi_m(\theta),v\rangle/2$. On $F^{\prime\prime}$ the inequality $\cos(z(\theta))\langle \hat\varphi_m(\theta),v\rangle>\langle \varphi_m(\theta),v\rangle/2$ holds, and consequently
\begin{align*}
\langle\hat\varphi_m(\theta),v\rangle|\sin(z(\theta))|&>\frac{\langle \varphi_m(\theta),v\rangle}{2}|\sin(z(\theta))|\\
&>\frac{\langle \varphi_m(\theta),v\rangle}{\pi}|z(\theta)|\geq \frac{\langle \varphi_m(\theta),v\rangle}{\pi}\varepsilon^{\prime}.
\end{align*}
Altogether, this yields that the integrand in \eqref{below} is  larger than or equal to $\langle\varphi_m(\theta),v\rangle\varepsilon^{\prime}/\pi$. Now, it is easy to see that, due to Assumption~B.3, \eqref{Q1cons} cannot hold. This leads to a contradiction. 

Thus, we can conclude that $\max_{j\in\mathbb{Z}}\|\phi_{mj} - \hat \phi_{mj} \|=o_P(1)$, so that, for  sufficiently slowly growing $L$, we also have $L\,\max_{j\in\mathbb{Z}}\|\phi_{mj} - \hat \phi_{mj} \| = o_P(1)$. Consequently,
\begin{align}\label{eq:product:conv}
\Bigg|\Sum_{|j| \leq L} \ip{X_{k-j}}{\phi_{mj} - \hat \phi_{mj}}  \Bigg| &= o_P(1)\times \left( L^{-1}\Sum_{j=-L}^L \| X_{k-j} \|\right).
\end{align}
It remains to show that $L^{-1}\Sum_{j=-L}^L \| X_{k-j} \|=O_P(1)$. By the weak stationarity assumption, we have $E\|X_k\|^2=E\|X_1\|^2$, and hence, for any $x > 0$,
\begin{align*}
P\bigg(L^{-1}\Sum_{j=-L}^L \| X_{k-j} \| > x\bigg) \leq \frac{\sum_{k=-L}^LE \|X_k\|}{Lx} \leq \frac{3\sqrt{E \|X_1\|^2}}{x}.
\end{align*}

\vspace{-12mm}

\end{proof}

\vspace{2mm}

\begin{lemma}\label{lem:sum1}
Let $L=L(n)\to\infty$. Then, under condition \eqref{e:abssymcov}, we have
\begin{align*}
\Bigg|\Sum_{|j|>L} \ip{X_{k-j}}{\phi_{mj}}\Bigg|=o_P(1).
\end{align*}
\end{lemma}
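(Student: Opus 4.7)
The plan is to identify the sum in question as the tail of a mean-square convergent series, and then pass from $L^2$ convergence to convergence in probability via Markov's inequality.

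First, I would invoke Proposition~\ref{pr:secondorder}(a): under the summability assumption~\eqref{e:abssymcov}, the series $\sum_{j\in\mathbb{Z}}\ip{X_{k-j}}{\phi_{mj}}$ defining $Y_{mk}$ is mean-square convergent. Denoting by $S_L:=\sum_{|j|\leq L}\ip{X_{k-j}}{\phi_{mj}}$ the symmetric partial sums, this amounts to the statement $E\,|Y_{mk}-S_L|^2\to 0$ as $L\to\infty$.

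Next, observe that the quantity to be controlled is exactly the tail
$$
T_L:=\sum_{|j|>L}\ip{X_{k-j}}{\phi_{mj}}=Y_{mk}-S_L,
$$
so $E\,|T_L|^2\to 0$ as $L\to\infty$. Since $L=L(n)\to\infty$, Markov's inequality then yields $|T_L|=o_P(1)$, which is the claim.

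There is no genuine technical obstacle here: the lemma is a one-line consequence of the definition of mean-square convergence combined with Markov's inequality, once the substantive work of verifying mean-square convergence of the filtered process (via Proposition~\ref{pr:sd_filter0}) has already been carried out in establishing Proposition~\ref{pr:secondorder}. A slightly more pedestrian alternative, which avoids quoting Proposition~\ref{pr:secondorder}(a) directly, would be to bound $E|T_L|^2$ by $\int_{-\pi}^{\pi}\langle \mathcal{F}_\theta^X(g_L(\theta)),g_L(\theta)\rangle\,d\theta$ with $g_L(\theta):=\sum_{|j|>L}\phi_{mj}e^{\ii j\theta}$, and then note that $g_L\to 0$ in $L^2_H([-\pi,\pi])$ by Proposition~\ref{pr:Hfourier}; this, together with $\sup_\theta\|\mathcal{F}_\theta^X\|_\mathcal{S}<\infty$ (a consequence of \eqref{e:abssymcov}), gives the same conclusion.
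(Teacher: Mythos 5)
Your proof is correct and is exactly the paper's argument: the paper disposes of this lemma with the single line ``immediate from Proposition~\ref{pr:secondorder}, part (a)'', which is precisely your route of identifying the sum as the tail of the mean-square convergent series defining $Y_{mk}$ and passing to convergence in probability via Markov's inequality. Your write-up simply makes explicit the steps the paper leaves implicit.
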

\begin{proof}
This is immediate from Proposition~\ref{pr:secondorder}, part (a).
\end{proof}

Turning to  the proof of Proposition~\ref{lem:gammas}, we first establish the following lemma, which an extension to lag-$h$ autocovariance operators of a consistency result from \cite{hormann:kokoszka:2010} on the empirical covariance operator. Define, for $|h|<n$,
$$
\hat C_h=\frac{1}{n}\sum_{k=1}^{n-h}X _{k+h}\otimes X _{k},\quad h\geq 0,
\quad\text{and}\quad
\hat C_h=\hat C_{-h},\quad h< 0.
$$
\begin{lemma}\label{lem:be}
Assume that $(X_t : t \in \mathbb{Z})$ is an $L^4$-$m$-approximable series. 
Then, for all~$|h|<n$,  
$E \| \hat C_{h} - C_{h} \|_\cS \leq U\sqrt{{(|h| \vee 1)}/{n}},$
where the constant~$U$   neither depends on $n$ nor on $h$.
\end{lemma}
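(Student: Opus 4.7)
The plan is to decompose $\hat C_h - C_h$ into a deterministic bias and a stochastic fluctuation and bound each separately. By the symmetry built into the definition, we may assume $h\geq 0$. Under stationarity, $E\hat C_h=\frac{n-h}{n}C_h$, so the bias equals $-\frac{h}{n}C_h$; its $\cS$-norm is bounded by $\frac{h}{n}\|C_h\|_\cS$. The uniform-in-$h$ estimate $\|C_h\|_\cS=\|E[X_h\otimes X_0]\|_\cS\leq E[\|X_h\|\,\|X_0\|]\leq E\|X_0\|^2$ is finite, since $L^4$-$m$-approximability implies $E\|X_0\|^4<\infty$. Because $h<n$ yields $h/n\leq\sqrt{h/n}$, the bias contributes a term of order $\sqrt{h/n}$ with a constant free of both $n$ and $h$.

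For the fluctuation, set $Z_k:=X_{k+h}\otimes X_k - C_h$, so that $\hat C_h-E\hat C_h=\frac{1}{n}\sum_{k=1}^{n-h}Z_k$ as elements of $\cS$. The workhorse will be the fact that for a zero-mean $L^2$-$m$-approximable sequence in a Hilbert space, the partial-sum second moment grows at most linearly in the number of terms, with a constant depending only on the summable approximation coefficients; this is the Hilbert-space bound underpinning the control of the empirical lag-$0$ covariance operator in \cite{hormann:kokoszka:2010}. The task is therefore to show that $(Z_k)$, viewed as a sequence in the separable Hilbert space $\cS$, is $L^2$-$m$-approximable with approximation-coefficient summability bounded \emph{uniformly in} $h$.

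The natural coupling is $Z_k^{(j)}:=X_{k+h}^{(j+h)}\otimes X_k^{(j)}-C_h$, where $X_k^{(j)}$ is the $L^4$-$m$-coupling of Definition~\ref{def:mapr}. The shifted approximation order $j+h$ for the leading factor ensures that both factors share the \emph{same} original innovations $\delta_\ell$ for $\ell\geq k-j+1$ and the same independent copies beyond, so $Z_k^{(j)}\stackrel{d}{=}Z_k$. The product decomposition
\[
X_{k+h}\otimes X_k - X_{k+h}^{(j+h)}\otimes X_k^{(j)}=(X_{k+h}-X_{k+h}^{(j+h)})\otimes X_k + X_{k+h}^{(j+h)}\otimes(X_k-X_k^{(j)}),
\]
combined with $\|x\otimes y\|_\cS=\|x\|\,\|y\|$, Minkowski's inequality, and Cauchy--Schwarz inside the expectation, gives
\[
\sqrt{E\|Z_k-Z_k^{(j)}\|_\cS^2}\leq \mu\bigl(\nu_{j+h}+\nu_j\bigr),
\]
where $\mu:=(E\|X_0\|^4)^{1/4}$ and $\nu_m:=(E\|X_0-X_0^{(m)}\|^4)^{1/4}$. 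Summing in $j$ and using $\sum_j\nu_{j+h}\leq\sum_j\nu_j<\infty$ yields a bound independent of $h$.

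Combining this coupling estimate with the Hilbert-space partial-sum bound (after a trivial re-indexing to put $(Z_k)$ into the one-sided Bernoulli-shift form) gives $E\|\sum_{k=1}^{n-h}Z_k\|_\cS^2\leq Cn$ with $C$ free of $h$, so Jensen delivers $E\|\hat C_h-E\hat C_h\|_\cS\leq\sqrt{C/n}$. Adding the bias estimate and using $|h|\vee 1\geq 1$ yields the claimed $U\sqrt{(|h|\vee 1)/n}$. The main obstacle is the coupling construction: the $+h$ shift in the approximation order of the leading factor is what preserves the distributional identity $Z_k^{(j)}\stackrel{d}{=}Z_k$ while simultaneously making the $j$-summability of the approximation errors uniform in $h$, and this uniformity is precisely what delivers a constant $U$ independent of $h$.
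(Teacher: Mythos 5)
Your overall strategy (variance of the partial sums of $Z_k=X_{k+h}\otimes X_k-C_h$, controlled through the $L^4$-$m$-coupling and the identity $\|x\otimes y\|_\cS=\|x\|\,\|y\|$) is the same as the paper's, your treatment of the bias $-\frac{h}{n}C_h$ is fine, and your observation that the shifted coupling $Z_k^{(j)}=X_{k+h}^{(j+h)}\otimes X_k^{(j)}-C_h$ preserves the distribution of $Z_k$ is correct. The gap is in the step you yourself flag as the crux: the claim that $(Z_k)$ is $L^2$-$m$-approximable \emph{with approximation-coefficient summability uniform in $h$}, and hence that $E\|\sum_{k=1}^{n-h}Z_k\|_\cS^2\leq Cn$ with $C$ free of $h$. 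After your re-indexing ($\eta_k:=\delta_{k+h}$), the coupling of Definition~\ref{def:mapr} at order $m$ replaces $\delta_\ell$ for $\ell\leq k+h-m$; your $Z_k^{(j)}$ replaces $\delta_\ell$ for $\ell\leq k-j$, i.e.\ it is the order-$(j+h)$ coupling. So the family you exhibit only covers orders $m>h$. The orders $m=1,\ldots,h$ are not small: at such an order the entire second factor $X_k$ is replaced by an independent copy, so $E\|Z_k-Z_k^{[m]}\|_\cS^2$ is of order one, and the full summability constant is $O(h)$, not $O(1)$. This matters because the partial-sum bound you invoke rests on $E\|\sum_{k\leq N}Z_k\|_\cS^2\leq N\sum_{r}|E\langle Z_0,Z_r\rangle_\cS|$ together with $|E\langle Z_0,Z_r\rangle_\cS|\leq(E\|Z_0\|_\cS^2)^{1/2}\,\nu_r^Z$, which requires a coupled version of $Z_r$ \emph{independent of $Z_0$}; since $Z_0$ involves $\delta_\ell$ up to $\ell=h$, independence forces $j\leq r-h$, so the lags $|r|\leq h$ cannot be handled by your couplings at all and must be bounded trivially by $E\|Z_0\|_\cS^2$ each. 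That trivial bound over $2h+1$ lags is exactly where the factor $(|h|\vee 1)$ in the lemma comes from (and it is how the paper proceeds); an $h$-uniform fluctuation bound does not follow from your argument, and establishing it (if it holds at all under $L^4$-$m$-approximability alone) would require a genuinely different device for $E\langle Z_0,Z_r\rangle_\cS$ with $1\leq r\leq h$, exploiting the gap $\min(r,h-r)$ between the index pairs $\{0,h\}$ and $\{r,r+h\}$.

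If you repair the bookkeeping — trivial bound $(E\|Z_0\|_\cS^2)^{1/2}\nu_r^Z\leq E\|Z_0\|_\cS^2\leq(E\|X_0\|^4)^{1/2}$ for $|r|\leq h$, your coupling estimate $\nu_r^Z\leq\mu(\nu_r+\nu_{r-h})$ for $r>h$ — you obtain $E\|\sum_{k=1}^{n-h}Z_k\|_\cS^2\leq Cn(|h|\vee 1)$ with $C$ independent of $h$, and together with your bias estimate this yields the lemma; note this is then essentially the paper's proof, not a route that delivers a constant independent of $h$ through uniform approximability of $(Z_k)$.
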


\begin{proof}[Proof of Proposition \ref{lem:gammas}]
By the triangle inequality,
\begin{align}
&2\pi \| \mathcal{F}_\theta^X - \mathcal{\hat F}_\theta^X \|_\cS= \Bigg\| \Sum_{k \in \mathbb{Z}} C_h e^{-\ii h\theta} - \Sum_{h=-q}^{q} \bigg( 1 - \frac{|h|}{q} \bigg)\hat C_h e^{-\ii h\theta} \Bigg\|_\cS\nonumber\\
&\qquad\leq \Bigg\| \Sum_{h = -q}^q \bigg(1 - \frac{|h|}{q}\bigg) (C_h - \hat C_h) e^{-\ii h\theta} \Bigg\|_\cS\nonumber\\
&\qquad\ \ \ + 
\Bigg\| \frac{1}{q} \Sum_{h = -q}^q |h|C_h e^{-\ii h\theta} \Bigg\|_\cS +
\Bigg\| \Sum_{|h| > q} C_h e^{-\ii h\theta} \Bigg\|_\cS\nonumber\\
&\qquad\leq  \Sum_{h = -q}^q \bigg(1 - \frac{|h|}{q}\bigg) \|C_h - \hat C_h\|_\cS + 
\frac{1}{q} \Sum_{h = -q}^q |h| \| C_h \|_\cS +
\Sum_{|h| > q} \| C_h \|_\cS.\nonumber
\end{align}
The last two terms tend to $0$ by condition \eqref{e:abssymcov} and Kronecker's lemma. For the first term we may use Lemma~\ref{lem:be}. Taking   expectations, we obtain that,  for some $U_1$, \vspace{-1mm}
\begin{align*}
\Sum_{h = -q}^q \bigg(1 - \frac{|h|}{q}\bigg) E \|C_h - \hat C_h\|_\cS \leq U_1\frac{q^{3/2}}{\sqrt{n}}.
\vspace{-1mm}\end{align*} 
Note that the bound does not depend on $\theta$; hence $q^3=o(n)$ and condition \eqref{e:abssymcov} jointly imply that $\sup_{\theta\in[-\pi,\pi]}E\| \mathcal{F}_\theta^X - \mathcal{\hat F}_\theta^X \|_\cS\to 0$ as $n\to\infty$.
\end{proof}

\section{Technical results and background}\label{app:technicalresults}

\subsection{Linear operators}\label{se:lo}
Consider the class $\mathcal{L}(H,H^{\prime})$ of bounded linear operators between two Hilbert spaces $H$ and $H^{\prime}$.  For~$\Psi\in\mathcal{L}(H,H^{\prime})$, the {\it operator norm} is defined as 
$\|\Psi\| _\mathcal{L}:=\sup_{\|x\|\leq 1}\|\Psi(x)\|$. The simplest operators can be defined via a tensor product $v\otimes w$; then  $v\otimes w(z):=v\langle z,w\rangle$. 
Every operator $\Psi\in\mathcal{L}(H,H^{\prime})$ possesses an {\it adjoint} $\Psi^*\in\mathcal{L}(H^{\prime},H)$, which  satisfies $\langle \Psi (x),y\rangle=\langle x,\Psi^*(y)\rangle$ for all~$x\in H$ and $y\in H^{\prime}$. It holds that $\|\Psi^*\|_\mathcal{L}=\|\Psi\|_\mathcal{L}$. If $H=H^{\prime}$, then $\Psi$ is called {\em self-adjoint} if $\Psi=\Psi^*$. It is called non-negative definite  if $\langle \Psi x,x\rangle\geq 0$ for all $x\in H$.

A linear operator $\Psi\in\mathcal{L}(H,H^{\prime})$ is said to be {\em Hilbert-Schmidt} if, for some orthonormal basis $(v_k\colon k\geq 1)$ of $H$, we have $\|\Psi\|_\mathcal{S}^2:=\sum_{k\geq 1}\|\Psi(v_k)\|^2<\infty$. Then,~$\|\Psi\|_\mathcal{S}$ defines a norm, the so-called {\it Hilbert-Schmidt norm} of $\Psi$, which bounds the operator norm $\|\Psi\|_\mathcal{L}\leq \|\Psi\|_\mathcal{S}$, and can be shown to be independent of the choice of the orthonormal basis. Every Hilbert-Schmidt operator is compact. The class of Hilbert-Schmidt operators between $H$ and $H^{\prime}$ defines again a separable Hilbert space with inner product $\langle \Psi,\Theta\rangle_\mathcal{S}:=\sum_{k\geq 1}\langle \Psi(v_k),\Theta(v_k)\rangle$: denote this class by~$\mathcal{S}(H,H^{\prime})$.\vspace{1mm}

If $\Psi\in\mathcal{L}(H,H^{\prime})$ and $\Upsilon\in\mathcal{L}(H^{\prime\prime},H)$, then $\Psi\Upsilon$ is the operator   mapping  $x\in H^{\prime\prime}$ to~$\Psi(\Upsilon(x))\in H^{\prime}$.
Assume that $\Psi$ is a compact operator in $\mathcal{L}(H,H^{\prime})$ and let $(s^2_j)$ be the eigenvalues of $(\Psi^*)\Psi$. Then $\Psi$ is said to be {\em trace class} if $\|\Psi\|_\mathcal{T}:=\sum_{j\geq 1} s_j<\infty$. In this case, $\|\Psi\|_\mathcal{T}$ defines a norm, the so-called {\em Schatten 1-norm}. We have that \linebreak $\|\Psi\|_\mathcal{S}\leq\|\Psi\|_\mathcal{T}$, and hence any trace-class operator is Hilbert-Schmidt. For self-adjoint non-negative operators, it holds that $\|\Psi\|_\mathcal{T}=\tr(\Psi):=\sum_{k\geq 1}\langle \Psi(v_k),v_k\rangle$. If~$\tilde\Psi\tilde\Psi=\Psi$, then we have $\tr(\Psi)=\|\tilde\Psi\|_\mathcal{S}^2$.

For further background on the theory of linear operators we refer to \cite{gohberg:goldberg:kaashoek:2003}.

\subsection{Random sequences in Hilbert spaces}
All random elements that appear in the sequel are assumed to be defined on a common probability space~$(\Omega,\mathcal{A},P)$. We write $X\in L_H^p(\Omega,\mathcal{A},P)$ (in short, $X\in L_H^p$) if $X$ is an $H$-valued random variable such that~$E\|X\|^p<\infty$. Every element $X\in L_H^1$ possesses an expectation, which is the unique $\mu\in H$ satisfying $E\langle X,y\rangle=\langle\mu,y\rangle$ for all $y\in H$.
Provided that $X$ and $Y$ are in~$L_H^2$, we can define the cross-covariance operator as $C_{XY}:=E(X-\mu_X)\otimes (Y-\mu_Y)$, where $\mu_X$ and $\mu_Y$ are the expectations of $X$ and $Y$, respectively. We have that $\|C_{XY}\|_\mathcal{T}\leq E\|(X-\mu_X)\otimes (Y-\mu_Y)\|_\mathcal{T}=E\|X-\mu_X\|\|Y-\mu_Y\|$, and so these operators are trace-class. An important specific role is played by the covariance operator $C_{XX}$. This operator is non-negative definite and self-adjoint  with $\tr(C_{XX})=E\|X-\mu_X\|^2$. An $H$-valued process $(X_t)$ is called  {\em (weakly) stationary}  if $(X_t)\in L_H^2$, and  $EX_t$ and~$C_{X_{t+h}X_t}$ do not depend on~$t$. In this case, we write $C_h^X$, or shortly $C_h$, for~$C_{X_{t+h}X_t}$ if it is clear to which process it belongs. 

Many useful results on random processes in Hilbert spaces or more general Banach spaces are collected in Chapters 1 and 2 of \cite{bosq:2000}.

\subsection{Proofs for Appendix~A}

\begin{proof}[Proof of Proposition~\ref{pr:Hfourier}]
Letting $0<m<n$, note that \vspace{-2mm}
\begin{align*}
\|S_n-S_m\|_2^2&=\bigg(\sum_{m\leq |k|\leq n} f_k e_k,\sum_{m\leq |\ell|\leq n} f_\ell e_\ell\bigg)\\
&=\frac{1}{2\pi}\int_{-\pi}^\pi \sum_{m\leq |k|\leq n}\sum_{m\leq |\ell|\leq n}\langle f_k,f_\ell\rangle e^{\ii (k-\ell)\theta}d\theta =\sum_{m\leq |k|\leq n}\|f_k\|^2.
\end{align*}
To prove the first statement, we need to show that $(S_n)$ defines a Cauchy sequence in~$L_H^2([-\pi,\pi])$, which follows if we show that $\sum_{k\in\mathbb{Z}}\|f_k\|^2<\infty$. 
We use the fact that, for any $v\in H$, the function $\langle x(\theta),v\rangle$ belongs to $L^2([-\pi,\pi])$. Then, by Parseval's identity and \eqref{e:defint}, we have, for any $v \in H$, \vspace{-1mm}
\begin{align*}
\frac{1}{2\pi}\int_{-\pi}^\pi|\langle x(\theta),v\rangle|^2d\theta&=\sum_{k\in\mathbb{Z}}
\left(\frac{1}{2\pi}\int_{-\pi}^\pi\langle x(s),v\rangle e^{-\ii ks}ds\right)^2 =\sum_{k\in\mathbb{Z}}|\langle f_k,v\rangle|^2.
\end{align*}
Let $(v_k\colon k\geq 1)$ be an orthonormal basis of $H$. Then, by the last result and Parseval's identity again, it follows that 
\begin{align*}
\|x\|_2^2&=\frac{1}{2\pi}\int_{-\pi}^\pi\sum_{\ell\geq 1}|\langle x(\theta),v_\ell\rangle|^2d\theta =\frac{1}{2\pi}\sum_{\ell\geq 1}\int_{-\pi}^\pi|\langle x(\theta),v_\ell\rangle|^2d\theta\\
&=\sum_{\ell\geq 1}\sum_{k\in\mathbb{Z}}|\langle f_k,v_\ell\rangle|^2=\sum_{k\in\mathbb{Z}}\|f_k\|^2.
\end{align*}

As for the second statement, we conclude from classical Fourier analysis results that, for each $v\in H$,\vspace{-1mm}
$$
\lim_{n\to\infty}\frac{1}{2\pi}\int_{-\pi}^\pi\left(\langle x(\theta),v\rangle-\sum_{k=-n}^n\left(\frac{1}{2\pi}\int_{-\pi}^\pi\langle x(s),v\rangle e^{-\ii ks}ds\right) e^{\ii k\theta}\right)^2d\theta=0.
\vspace{-1mm}$$
Now, by definition of $S_n$, this is equivalent to\vspace{-1mm}
$$
\lim_{n\to\infty}\frac{1}{2\pi}\int_{-\pi}^\pi \left\langle x(\theta)-S_n(\theta),v\right\rangle^2d\theta=0,\quad \forall v\in H.
\vspace{-1mm}$$
Combined with the first statement of the proposition and \vspace{-1mm}
\begin{align*}
\int_{-\pi}^\pi \left\langle x(\theta)-S(\theta),v\right\rangle^2d\theta&\leq 2\int_{-\pi}^\pi \left\langle x(\theta)-S_n(\theta),v\right\rangle^2d\theta\\ &\quad+ 2\|v\|^2\int_{-\pi}^\pi \| S_n(\theta)-S(\theta)\|^2d\theta,
\end{align*}
this implies that
\begin{equation}\label{eq:fourier_lim}
\frac{1}{2\pi}\int_{-\pi}^\pi \left\langle x(\theta)-S(\theta),v\right\rangle^2d\theta=0,\quad \forall v\in H.
\end{equation}
Let $(v_i)$, $i\in\mathbb{N}$ bee an orthonormal basis of $H$, and define \vspace{-2mm}
 $$A_i :=\{\theta\in[-\pi,\pi]\colon \left\langle x(\theta)-S(\theta),v_i\right\rangle\neq 0\}.\vspace{-2mm}$$
 By~\eqref{eq:fourier_lim}, we have that $\lambda(A_i)=0$ ($\lambda$ denotes the Lebesgue measure), and hence $\lambda(A)=~0$ for $A=\cup_{i\geq 1}A_i$. Consequently, since $(v_i)$ define an orthonormal basis, for any $\theta\in [-\pi,\pi]\setminus A$, we have $\langle x(\theta)-S(\theta),v\rangle=0$ for all $v\in H$, which in turn implies that $x(\theta)-S(\theta)=0$.
\end{proof}

\begin{proof}[Proof of Proposition~\ref{pr:summab}]
Without loss of generality, we assume that $EX_0=0$.
Since~$X_0$ and $X_h^{(h)}$, $h\geq 1$,  are independent,  
$$
\|C_h^X\|_\mathcal{S}=\|EX_0\otimes(X_h-X_h^{(h)})\|_\mathcal{S}\leq (E\|X_0\|^2)^{1/2}(E\|X_h-X_h^{(h)}\|^2)^{1/2}.
$$
The first statement of the proposition follows.

%
Let  $\theta$ be fixed. Since $\mathcal{F}_\theta^X$ is non-negative and self-adjoint, it is trace class if and only if
\begin{equation}\label{eq:trace_class}
\tr(\mathcal{F}_\theta^X)=\sum_{m\geq 1}\langle \mathcal{F}_\theta^X(v_m),v_m\rangle<\infty
\end{equation} 
for some orthonormal basis $(v_m)$ of $H$.  The trace can be shown to be independent of the choice of the basis.
Define $V_{n,\theta}=(2\pi n)^{-1/2}\sum_{k=1}^n X_k e^{\ii k\theta}$ and note that, by stationarity, 
$$
\mathcal{F}_{n,\theta}^X:=EV_{n,\theta}\otimes V_{n,\theta}=\frac{1}{2\pi}\sum_{|h|< n}\left(1-\frac{|h|}{n}\right) EX_0\otimes X_{-h}e^{-\ii h\theta}.
$$
It is easily verified that the operators $\mathcal{F}_{n,\theta}^X$ again are non-negative and self-adjoint. Also note that, by the triangular inequality,
$$
\|\mathcal{F}_{n,\theta}^X-\mathcal{F}_{\theta}^X\|_\mathcal{S}\leq\sum_{|h|<n}\frac{|h|}{n}\|C_h^X\|_\mathcal{S}+\sum_{|h|\geq n}\|C_h^X\|_\mathcal{S}.
$$
By application of \eqref{e:abssymcov} and Kronecker's lemma, it easily follows that the latter two terms converge to zero. This implies that $\mathcal{F}_{n,\theta}^X(v)$ converges in norm to~$\mathcal{F}_{\theta}^X(v)$, for any $v\in H$.

Choose $v_m=\varphi_m(\theta)$. Then, by continuity of the inner product and Fatou's lemma, we have
\begin{align*}
\sum_{m\geq 1}\langle \mathcal{F}_\theta^X(\varphi_m(\theta)),\varphi_m(\theta)\rangle&=\sum_{m\geq 1}\liminf_{n\to\infty}\langle \mathcal{F}_{n,\theta}^X(\varphi_m(\theta)),\varphi_m(\theta)\rangle\\&\leq\liminf_{n\to\infty}\sum_{m\geq 1}\langle \mathcal{F}_{n,\theta}^X(\varphi_m(\theta)),\varphi_m(\theta)\rangle.
\end{align*}
Using the fact that the $\mathcal{F}_{n,\theta}^X$'s are self-adjoint and non-negative, we get
\begin{align*}
\sum_{m\geq 1}\langle \mathcal{F}_{n,\theta}^X(\varphi_m(\theta)),\varphi_m(\theta)\rangle&=\tr(\mathcal{F}_{n,\theta}^X)=E\|V_n\|^2\\
&=\frac{1}{2\pi}\sum_{|h|< n}\left(1-\frac{|h|}{n}\right) E\langle X_0,X_{h}\rangle e^{-\ii h\theta}.
\end{align*}
Since $|E\langle X_0,X_h\rangle|=|E\langle X_0,X_h-X_h^{(h)}\rangle|$, by the Cauchy-Schwarz inequality, $$\sum_{h\in\mathbb{Z}}|E\langle X_0,X_h\rangle|\leq \sum_{h\in\mathbb{Z}}(E\| X_0\|^2)^{1/2}(E(X_h-X_h^{(h)})^2)^{1/2}<\infty,$$ and thus the dominated convergence theorem implies that
\begin{equation}\label{trace}
\tr(\mathcal{F}_\theta^X)\leq \sum_{h\in\mathbb{Z}}|E\langle X_0,X_h\rangle|<\infty,
\end{equation}
which completes he proof.\end{proof}

\begin{proof}[Proof of Proposition~\ref{pr:conteigen}]
We have (see e.g.\ \cite{gohberg:goldberg:kaashoek:2003}, p.\ 186) that the dynamic eigenvalues are such that 
$
|\lambda_m(\theta)-\lambda_m(\theta')|\leq\|\mathcal{F}_\theta^X-\mathcal{F}_{\theta'}^X\|_\mathcal{S}.
$
Now,
$$
\|\mathcal{F}_\theta^X-\mathcal{F}_{\theta'}^X\|_\mathcal{S}\leq\sum_{h\in\mathbb{Z}}\|C_h^X\|_\mathcal{S}|e^{-\ii h\theta}-e^{-\ii h\theta'}|.
$$
The summability condition \eqref{e:abssymcov} implies continuity, hence part (a) of the proposition. The fact that $|e^{-\ii h\theta}-e^{-\ii h\theta'}|\leq |h||\theta-\theta'|$ yields part (b).  To prove (c),  observe that
$$
\lambda_m(\theta)\varphi_m(\theta)=\mathcal{F}_\theta^X(\varphi_m(\theta))=\frac{1}{2\pi}\sum_{h\in\mathbb{Z}}EX_h\langle \varphi_m(\theta),X_0\rangle e^{-\ii h\theta}
$$
 for any $\theta\in[-\pi,\pi]$. Since the eigenvalues $\lambda_m(\theta)$ are real, we obtain, by computing the complex conjugate of the above equalities,
$$
\lambda_m(\theta) \overline{\varphi_m(\theta)}=\frac{1}{2\pi}\sum_{h\in\mathbb{Z}}EX_h\langle \overline{\varphi_m(\theta)},X_0\rangle e^{\ii h\theta}=\mathcal{F}_{-\theta}^X(\overline{\varphi_m(\theta)}).
$$
This shows that $\lambda_m(\theta)$ and $\overline{\varphi_m(\theta)}$ are eigenvalue and eigenfunction of $\mathcal{F}^X_{-\theta}$ and they must correspond to a pair $(\lambda_n(-\theta),\varphi_n(-\theta))$; (c) follows.
\end{proof}

\begin{lemma}\label{le:trace}
Let $(Z_t)$ be a stationary sequence in $L_H^2$ with spectral density $\mathcal{F}^Z_\theta$. Then, 
$$\int_{-\pi}^\pi \tr(\mathcal{F}_\theta^Z)d\theta=\tr\Big(\int_{-\pi}^\pi \mathcal{F}_\theta^Zd\theta\Big)=\tr(C_0^Z)=E\|Z_t\|^2.$$ 
\end{lemma}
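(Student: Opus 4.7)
The plan is to verify the three equalities separately, working from right to left. Throughout, fix an orthonormal basis $(v_\ell\colon \ell\geq 1)$ of $H$.

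The two rightmost equalities are bookkeeping. Since $(Z_t)$ is assumed zero-mean, $C_0^Z=E(Z_t\otimes Z_t)$, so $\langle C_0^Z(v_\ell),v_\ell\rangle=E|\langle Z_t,v_\ell\rangle|^2\geq 0$ and Tonelli combined with Parseval's identity yield
$$\tr(C_0^Z)=\sum_\ell E|\langle Z_t,v_\ell\rangle|^2=E\sum_\ell|\langle Z_t,v_\ell\rangle|^2=E\|Z_t\|^2.$$
For the middle equality I would apply Remark~\ref{rem3} to $(Z_t)$ at $k=0$, which yields $C_0^Z=\int_{-\pi}^\pi\mathcal{F}_\theta^Z\,d\theta$; taking $\tr$ of both sides closes this step.

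The substance lies in the first (trace--integral) interchange. Set $T:=\int_{-\pi}^\pi\mathcal{F}_\theta^Z\,d\theta$. Since each $\mathcal{F}_\theta^Z$ is self-adjoint and non-negative (as noted in Appendix~\ref{se:sdo}), $\langle\mathcal{F}_\theta^Z v_\ell,v_\ell\rangle\geq 0$, so any sum--integral exchange will follow from Tonelli. My plan is to invoke the defining property \eqref{e:defint} of the Hilbert-space-valued integral inside the HS space $\mathcal{S}$, applied with the rank-one test element $v_\ell\otimes v_\ell\in\mathcal{S}$:
$$\langle T v_\ell,v_\ell\rangle=\langle T,v_\ell\otimes v_\ell\rangle_{\mathcal{S}}=\int_{-\pi}^\pi\langle\mathcal{F}_\theta^Z,v_\ell\otimes v_\ell\rangle_{\mathcal{S}}\,d\theta=\int_{-\pi}^\pi\langle\mathcal{F}_\theta^Z v_\ell,v_\ell\rangle\,d\theta.$$
Summing over $\ell$ and swapping sum and integral by Tonelli then gives $\tr(T)=\int_{-\pi}^\pi\tr(\mathcal{F}_\theta^Z)\,d\theta$, completing the chain.

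The only genuinely non-trivial step is the interchange of the operator-valued integral with the inner product; this is the hard part. Once the integral is recognised via \eqref{e:defint} in the HS Hilbert space, the interchange reduces to the defining identity, and the non-negativity of $\mathcal{F}_\theta^Z$ removes the need for any dominated-convergence argument.
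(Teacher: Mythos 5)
Your argument is correct and essentially the same as the paper's: the paper also tests the $\mathcal{S}$-valued integral against finite-rank operators built from an orthonormal basis (it uses $V_N=\sum_{k=1}^N v_k\otimes v_k$ with monotone convergence, where you use single $v_\ell\otimes v_\ell$ with Tonelli, an immaterial difference given non-negativity of $\mathcal{F}_\theta^Z$). Your explicit handling of the remaining equalities via Remark~\ref{rem3} at $k=0$ and Parseval matches what the paper treats as background facts.
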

\begin{proof} Let $\mathcal{S}=\mathcal{S}(H,H)$. Note that $\int_{-\pi}^\pi \mathcal{F}_\theta^Zd\theta=\mathrm{I}\mathcal{F}^Z$ if and only if
\begin{equation}\label{e:IGamma}
\langle \mathrm{I}\mathcal{F}^Z,V\rangle_\mathcal{S}=\int_{-\pi}^\pi\langle  \mathcal{F}_\theta^Z,V\rangle_\mathcal{S}\,d\theta\quad\text{for all }V\in\mathcal{S}.
\end{equation}
For some orthonormal basis~$(v_k)$ define $V_N=\sum_{k=1}^Nv_k\otimes v_k$. Then~\eqref{e:IGamma} implies that 
\begin{align*}
\tr(\mathrm{I}\mathcal{F}^Z)&=\lim_{N\to\infty}\sum_{k=1}^N\langle \mathrm{I}\mathcal{F}^Z(v_k),v_k\rangle     =\lim_{N\to\infty}\langle\mathrm{I}\mathcal{F}^Z,V_N\rangle_\mathcal{S}       \\
&=\lim_{N\to\infty}\int_{-\pi}^ \pi\langle\mathcal{F}_\theta^Z,V_N\rangle_\mathcal{S}d\theta
=\lim_{N\to\infty}\int_{-\pi}^\pi\sum_{k=1}^N\langle  \mathcal{F}_\theta^Z(v_k),v_k\rangle\,d\theta.
\end{align*}
Since $\mathcal{F}_\theta^Z$ is non-negative definite for any $\theta$, the monotone convergence theorem allows to interchange the limit with the integral.
\end{proof}

\begin{proof}[Proof of Proposition~\ref{pr:sd_filter0}]
(i)  Define $Y_t^{r,s}:=\sum_{r<|k|\leq s}\Psi_k(X_{t-k})$ and the related transfer operator $\Psi^{r,s}_\theta :=\sum_{r<|k|\leq s}\Psi_k e^{-\ii k\theta}$. We also use $Y_t^s=\Psi_0(X_t)+Y_t^{0,s}$ and $\Psi^{s}_\theta=\Psi_0+\Psi^{0,s}_\theta$. Since~$Y_t^{r,s}$ is a finite sum, it is obviously  in $L^2_{H^{\prime}}$.  Also, the finite number of filter coefficients makes it easy to check that $(Y_t^{r,s}\colon t\in\mathbb{Z})$ is stationary and has spectral density operator $\mathcal{F}_\theta^{Y^{r,s}}=\Psi_\theta^{r,s}\mathcal{F}_\theta^X(\Psi_\theta^{r,s})^*$.  By the previous  lemma we have
\begin{align*}
E\|Y_t^{r,s}\|^2&=\int_{-\pi}^\pi \tr(\mathcal{F}_\theta^{Y_t^{r,s}})d\theta=
\int_{-\pi}^\pi\tr(\Psi_\theta^{r,s}\mathcal{F}_\theta^X(\Psi_\theta^{r,s})^*)d\theta\\
&\leq\int_{-\pi}^\pi\|\Psi_\theta^{r,s}\|^2_{\mathcal{S}(H,H^{\prime})}\tr(\mathcal{F}_\theta^X)d\theta.
\end{align*}
Now, it  directly follows from the  assumptions that $(Y_t^s\colon s\geq 1)$ defines a Cauchy sequence in $L^2_{H^{\prime}}$. This proves (i).


Next, remark that by our assumptions $\Psi_\theta\mathcal{F}_\theta^X(\Psi_\theta)^* \in L^2_{\mathcal{S}(H^{\prime},H^{\prime})}([-\pi,\pi])$. Hence, by the results in Appendix~A.1, 
$$
\sum_{|h|\leq r}\frac{1}{2\pi}\int_{-\pi}^\pi\Psi_u\mathcal{F}_u^X(\Psi_u)^*e^{\ii hu}du\, e^{-\ii h\theta}\to \Psi_\theta\mathcal{F}_\theta^X(\Psi_\theta)^*\quad\text{as } r\to\infty ,
$$
where convergence is in $L^2_{\mathcal{S}(H^{\prime},H^{\prime})}([-\pi,\pi])$.
We prove that $\Psi_\theta\mathcal{F}_\theta^X(\Psi_\theta)^*$ is the spectral density operator of $(Y_t)$. This is the case if $\frac{1}{2\pi}\int_{-\pi}^\pi\Psi_u\mathcal{F}_u^X(\Psi_u)^*e^{\ii hu}du=C_h^Y$. 
 For the approximating sequences $(Y_t^s\colon t\in\mathbb{Z})$ we know from (i) and Remark~\ref{rem3} that
 $$
\frac{1}{2\pi}\int_{-\pi}^\pi\Psi_u^s\mathcal{F}_u^X(\Psi_u^s)^*e^{\ii hu}du=C_h^{Y^s}.
$$
Routine arguments show that under our assumptions $\|C_h^{Y^s}-C_h^Y\|_{\mathcal{S}(H',H')}\to 0$ and
 $$
\left\|\int_{-\pi}^\pi\left(\Psi_u\mathcal{F}_u^X(\Psi_u)^*-
\Psi_u^s\mathcal{F}_u^X(\Psi_u^s)^*\right)e^{\ii hu}du\right\|_{\mathcal{S}(H',H')}\to 0,\quad(s\to\infty).
$$
Part (ii) of the proposition  follows, hence also part~(iii). \end{proof}

\begin{proof}[Proof of Lemma~\ref{lem:be}] Let us only consider the case $h\geq0$. Define $X_n^{(r)}$ as the $r$-dependent approximation of $(X_n)$ provided by Definition~\ref{def:mapr}. Observe that
\begin{equation*}
nE\big\|\hat{C}_{h} - C_{h}\big\|_\cS^2 = nE\left\|\frac{1}{n}\sum\limits_{k=1}^{n-h}Z_k\right\|_\cS^2,
\end{equation*}
where $Z_k = X_{k+h} \otimes X_k - C_{h}$. Set $Z_k^{(r)} = X_{k+h}^{(r)} \otimes X_k^{(r)} - C_{h}$. Stationarity of $(Z_k)$~implies 
\begin{align}
nE \left\|\frac{1}{n}\sum\limits_{k=1}^{n-h} Z_k\right\|_\cS^2 &= \sum\limits_{|r|<n-h}\left(1- \frac{|r|}{n}\right)E\ip{Z_0}{Z_r}_\mathcal{S}\nonumber\\
&\leq \sum\limits_{r=-h}^{h} |E\ip{Z_0}{Z_r}_\mathcal{S}| + 2\sum\limits_{r=h+1}^\infty |E\ip{Z_0}{Z_r}_\mathcal{S}|,\label{sum}
\end{align}
while  the Cauchy-Schwarz inequality yields
 $$|E\ip{Z_0}{Z_r}_\mathcal{S}| \leq E|\ip{Z_0}{Z_r}_\mathcal{S}| \leq \sqrt{E{\|Z_0\|_\mathcal{S}^2}E{\|Z_r\|_\mathcal{S}^2}} = E{\|Z_0\|_\mathcal{S}^2}.$$ Furthermore, from $\|X_h\otimes X_0\|= \|X_h\| \|X_0\|$, we deduce
\begin{align*}
E\|Z_0\|_\cS^{2} =  E\|X_0\|^{2} \|X_{h}\|^{2} \leq \left(E \|X_0\|^4\right)^{1/2} <\infty.
\end{align*}
Consequently, we can bound the first sum in \eqref{sum} by $(2h+1)\left(E\|X_0\|^4\right)^{1/2}$. For the   second term in \eqref{sum}, we obtain, by independence of~$Z_r^{(r-h)}$ and $Z_0$, that
\begin{align*}
|E\ip{Z_0}{Z_r}_\cS|=|E\ip{Z_0}{Z_r-Z_r^{(r-h)}}_\cS| \leq (E\|Z_0\|_\cS^2)^{1/2} (E\|Z_r - Z_r^{(r-h)}\|_\cS^2)^{1/2}.
\end{align*}
To conclude, it suffices to show that $\sum _{r=1}^\infty (E\|Z_r - Z_r^{(r-h)}\|_\cS^2)^{1/2}\leq M<\infty$, where the bound $M$ is independent of $h$. Using an inequality of the type $|ab-cd|^{2} \leq 2|a|^{2}|b-d|^{2} + 2|d|^{2}|a-c|^{2}$, we obtain
\begin{align*}
&E\|Z_r - Z_r^{(r-h)}\|_\cS^2= E\|X_r\otimes X_{r+h} - X_r^{(r-h)} \otimes X_{r+h}^{(r-h)}\|_\cS^2\\
&\qquad\leq 2E \|X_r\|^2 \|X_{r+h} - X_{r+h}^{(r-h)}\|^2 + 2E\|X_{r+h}^{(r-h)}\|^2 \|X_r - X_r^{(r-h)}\|^2\\
&\qquad\leq 2 (E \|X_r\|^4)^{1/2} (E\|X_{r+h} - X_{r+h}^{(r-h)}\|^4)^{1/2}\\
&\qquad\quad+ 2 (E\|X_{r+h}^{(r-h)}\|^4)^{1/2} (E\|X_r - X_r^{(r-h)}\|^4)^{1/2}.
\end{align*}
Note that $E\|X_{r}\|^4=E\|X_{r+h}^{(r-h)}\|^4=E\|X_0\|^4$ and  
$$E\|X_{r+h} - X_{r+h}^{(r-h)}\|^4=E\|X_{r} - X_{r}^{(r-h)}\|^4=E\|X_{0} - X_{0}^{(r-h)}\|^4.$$
 Altogether we get
$$
E\|Z_r - Z_r^{(r-h)}\|_\cS^2\leq 4  (E \|X_0\|^4)^{1/2} (E\|X_{0} - X_{0}^{(r-h)}\|^4)^{1/2}.
$$
Hence, $L^4$-$m$-approximability implies that $\sum_{r=h+1}^\infty |E\ip{Z_0}{Z_r}_\mathcal{S}|$ converges and is uniformly bounded over $0\leq h<n$. 
\end{proof}

\section*{Acknowledgement} The research of Siegfried H\"ormann and {\L}ukasz Kidzi\'nski was supported by the Communaut\'e fran\c{c}aise de Belgique -- Actions de Recherche Concert\'ees (2010--2015) and the Belgian Science Policy Office -- Interuniversity attraction poles (2012--2017). The research of Marc Hallin was supported by the Sonderforschungsbereich ``Statistical modeling of nonlinear dynamic processes'' (SFB823) of the Deutsche Forschungsgemeinschaft and the Belgian Science Policy Office -- Interuniversity attraction poles (2012--2017).


\begin{thebibliography}{99}
\bibitem{aue:dubart:hoermann:2012}
\textsc{Aue, A., Dubart Norinho, D. and H\"ormann, S.} (2014),
{On the prediction of functional time series}, 
{\it J.\ Amer.\ Statist.\ Assoc.} (forthcoming).

\bibitem{aston:kirch:2012}
\textsc{Aston, J.A.D.  and Kirch, C.} (2011),
{Estimation of the distribution of change-points with application to fMRI data,}
Technical Report, University of Warwick, Centre for Research in Statistical Methodology, 2011.

\bibitem{benko:hardle:kneip:2009}
\textsc{Benko, M., H\"ardle, W. and Kneip, A.} (2009),
{ Common functional principal components,}
{\it The Annals of Statistics} \textbf{37}, 1--34.

\bibitem{berkes:gabrys:horvath:kokoszka:2009}
\textsc{Berkes, I., Gabrys, R., Horv\'ath, L. and Kokoszka, P.}(2009),
{Detecting changes in the mean of functional
observations,}
 {\it J.\ Roy.\ Statist.\ Soc.\ Ser.\ B}, \textbf{71}, 927--946.

\bibitem{besse:ramsay:1986} \textsc{Besse, P. and Ramsay, J. O.} (1986),
{Principal components analysis of sampled functions,} 
{\it  Psychometrika} \textbf{51}, 285--311.

\bibitem{brillinger:2001} \textsc{Brillinger, D.\ R.} (1981),
 {\em Time Series: Data Analysis and Theory,}
 Holden Day, San Francisco.

\bibitem{brockwell:davis:1990} \textsc{Brockwell, P.\ J.\ and Davis, R.\ A.} (1981),
 {\em Time Series: Theory and Methods,}
 Springer, New York.

\bibitem{bosq:2000} \textsc{Bosq, D.} (2000),
 {\em Linear Processes in Function Spaces,}
 Springer, New York.

\bibitem{cardot:ferraty:sarda:1999} \textsc{Cardot, H., Ferraty, F. and Sarda, P.} (1999),
 {Functional linear model,} 
 {\it Statist.\&  Probab.\ Lett.}\ \textbf{45}, 11--22.

\bibitem{dauxois:pousse:romain:1982} \textsc{Dauxois, J., Pousse, A.\ and Romain, Y.}\ (1982),
 {Asymptotic theory for the principal component analysis of a vector random function: Some applications to statistical inference,}
 {\it J.\ Multivariate Anal.\ }  \textbf{12}, 136--154.

\bibitem{ferraty:vieu:2006} \textsc{Ferraty, F. and Vieu, P.} (2006),
 {\em Nonparametric Functional Data Analysis,}
 Springer, New York.

\bibitem{gervini:2007} \textsc{Gervini, D.} (2007),
 {Robust functional estimation using the median and spherical principal components,}
 {\it Biometrika} \textbf{95}, 587--600.

\bibitem{gohberg:goldberg:kaashoek:2003} \textsc{Gohberg, I., Goldberg, S. and Kaashoek, M.\ A.} (2003),
 {\em Basic Classes of Linear Operators,}
 Birkh\"auser.

\bibitem{gokulakrishnan:etal:2006} \textsc{Gokulakrishnan, P., Lawrence, P.\ D., McLellan, P.\ J. and Grandmaison, E.\ W.} (2006),
 {A functional-PCA approach for analyzing and reducing complex chemical mechanisms,}
 {\it Computers and Chemical Engineering} \textbf{30}, 1093--1101.

\bibitem{hall:hosseini:2006} \textsc{Hall, P. and Hosseini-Nasab, M.} (2006),
 {On properties of functional principal components analysis,}
 {\it J.\ Roy.\ Statist.\ Soc.\ Ser.\ B} \textbf{68}, 109--126.

\bibitem{hall:muller:wang:2006} \textsc{Hall, P., M\"uller, H.-G. and Wang, J.-L.} (2006),
 {Properties of principal component methods for functional and longitudinal data analysis,}
 {\it The Annals of Statistics} \textbf{34}, 1493--1517.

\bibitem{hormann:kidzinski:2012} \textsc{H\"ormann, S. and Kidzi\'nski, {\L}.} (2012),
{A note on estimation in Hilbertian linear models,}
{\it Scand.\ J.\ Stat.} (forthcoming).

\bibitem{hormann:kokoszka:2010} \textsc{H\"ormann, S. and Kokoszka, P.} (2010),
{Weakly dependent functional data,}
 {\it The Annals of Statistics} \textbf{38}, 1845--1884.

\bibitem{hormann:kokoszka:2012} \textsc{H\"ormann, S. and Kokoszka, P.} (2012),
 {Functional Time Series,} in 
 {\it Handbook of Statistics: Time Series Analysis-Methods and Applications}, 157--186.

\bibitem{hyndman:ullah:2007} \textsc{Hyndman, R.\ J. and Ullah, M.\ S.} (2007),
 {Robust forecasting of mortality and fertility rates: a functional data approach,}
 {\it Computational Statistics \& Data Analysis} \textbf{51}, 4942--4956.

\bibitem{james:hastie:sugar:2000} \textsc{James, G.M., Hastie T.\ J. and Sugar, C.\ A.} (2000),
 {Principal component models for sparse functional data,}
 {\it Biometrika} \textbf{87}, 587--602. 

\bibitem{jolliffe:2002} \textsc{Jolliffe, I.T.} (2002),
 {\em Principal Component Analysis,}
  Springer, New York.

\bibitem{karhunen:1947} \textsc{Karhunen, K.} (1947),
 {\"Uber lineare Methoden in der Wahrscheinlichkeitsrechnung,}
 {\it Ann.\ Acad.\ Sci.\ Fennicae\ Ser.\ A.\ I.\ Math.-Phys.}\ \textbf{37}, 79.

\bibitem{kneip:utikal:2001} \textsc{Kneip, A. and Utikal, K.} (2001),
 {Inference for density families using functional principal components analysis,}
  {\it J.\ Amer.\ Statist.\ Assoc.}\ \textbf{96}, 519--531.

\bibitem{locantore:etal:1999} \textsc{Locantore, N., Marron, J.\ S., Simpson, D.\ G., Tripoli, N., Zhang, J.\ T. and Cohen, K.\ L.} (1999),
 {Robust principal component analysis for functional data},
{\it  Test}  \textbf{8}, 1--73.

\bibitem{loeve:1946} \textsc{Lo\`eve, M.} (1946),
 {Fonctions al\' eatoires de second ordre,}
 {\it Revue Sci.}\ \textbf{84}, 195--206.

\bibitem{panaretos:tavakoli:2012} \textsc{Panaretos, V.\ M. and Tavakoli, S.}\ (2013a),
 {Fourier analysis of stationary time
series in function space,} 
 {\it The Annals of Statistics} {\bf 41}, 568-603.
 
 \bibitem{panaretos:tavakoli:2013} \textsc{Panaretos, V.\ M. and Tavakoli, S.}\ (2013b),
 {Cram\' er-Karhunen-Lo\`eve representation and harmonic principal component analysis of functional time series},   
 {\it Stoch.\ Proc.\ Appl.} {\bf 123}, 2779-2807.


\bibitem{politis:2011} \textsc{Politis, D.\ N.} (2011),
 {Higher-order accurate, positive semi-definite estimation of large-sample covariance and spectral density matrices,}
 {\it Econometric Theory} \textbf{27}, 703--744.

\bibitem{ramsay:dalzell:1991} \textsc{Ramsay, J.\ O. and Dalzell, C.\ J.} (1991), 
 {Some tools for functional data analysis (with discussion),} 
 {\it J.\ Roy.\ Statist.\ Soc.\ Ser.\ B} \textbf{53}, 539--572.

\bibitem{ramsay:silverman:2002} \textsc{Ramsay, J. and Silverman, B.} (2002),
 {\em Applied Functional Data Analysis,} 
 Springer, New York.

\bibitem{ramsay:silverman:2005} \textsc{Ramsay, J. and Silverman, B.} (2005),
 {\em Functional Data Analysis } (2nd ed.), 
  Springer, New York.

\bibitem{reiss:ogden:2007} \textsc{Reiss, P.\ T. and Ogden, R.\ T.} 
 {Functional principal component regression and functional partial least squares,}
 {\it J.\ Amer.\ Statist.\ Assoc.}\ \textbf{102}, 984--996.

\bibitem{silverman:1996} \textsc{Silverman, B.} (1996),
 {Smoothed functional principal components analysis by choice of norm,}
 {\it The Annals of Statistics} \textbf{24}, 1--24.

\bibitem{shumway:stoffer:2006} \textsc{Shumway, R. and Stoffer, D.} (2006),
{\em Time Series Analysis and Its Applications} (2nd ed.),
Springer, New York.

\bibitem{stadlober:hoermann:pfeiler:2008} \textsc{Stadlober, E., H\"ormann, S. and Pfleiler, B.} (2008),
 {Quality and performance of a PM10 daily forecasting model,}
 {\it Atmospheric Environment} \textbf{42}, 1098--1109.

\bibitem{viviani:etal:2005}  \textsc{Viviani, R.,  Gr\"on, G. and Spitzer, M.} (2005),
 {Functional principal component analysis of fMRI data,}
 {\it Human Brain Mapping} \textbf{24}, 109--129.

\end{thebibliography}
\end{document}